
\documentclass[11pt]{amsart}
\usepackage{amsmath, amssymb, latexsym}
\usepackage{mathrsfs}
\usepackage{enumerate}

\usepackage{color}

\usepackage[colorlinks=true, pdfstartview=FitV, linkcolor=blue,%
citecolor=blue, urlcolor=blue]{hyperref}

\usepackage[all, knot]{xy}

\setlength{\textwidth}{15.5cm} \setlength{\textheight}{20cm}
\setlength{\oddsidemargin}{0.0cm} \setlength{\evensidemargin}{0.0cm}

\newcommand{\nc}{\newcommand}
\newtheorem{Thm}{Theorem}[section]
\newtheorem{Prop}[Thm]{Proposition}
\newtheorem{Cor}[Thm]{Corollary}
\newtheorem{Lem}[Thm]{Lemma}

\theoremstyle{definition}
\newtheorem{Def}[Thm]{Definition}

\theoremstyle{remark}


\newenvironment{red}
{\relax\color{red}}
{\hspace*{.5ex}\relax}

\newcommand{\ber}{\begin{red}}
\newcommand{\er}{\end{red}}

\newenvironment{verd}
{\relax\color{magenta}}
{\hspace*{.5ex}\relax}

\newcommand{\bg}{\begin{verd}}
\newcommand{\eg}{\end{verd}}


\numberwithin{equation}{subsection}

\newcommand{\Z}{\mathbb{Z}}

\newcommand{\C}{\mathbb{C}}

\newcommand{\A}{\mathbb{A}}

\newcommand{\g}{\mathfrak{g}}

\newcommand{\Lb}{\mathfrak{b}}
\newcommand{\h} {\mathfrak{h}}
\newcommand{\Lt} {\mathfrak{t}}
\newcommand{\gl}{\mathfrak{gl}}
\newcommand{\LG}{\mathrm{G}}
\newcommand{\LB}{\mathrm{B}}
\newcommand{\LT} {\mathrm{T}}
\newcommand{\D} {\mathbf{D}}

\newcommand{\gR} {\mathfrak{R}}
\newcommand{\gPol} {\mathfrak{Pol}}
\newcommand{\gS} {\mathbf{S}}
\newcommand{\gP} {\mathbf{P}}
\newcommand{\gK} {\mathbf{K}}
\newcommand{\gH} {\mathbf{H}}

\newcommand{\LH}{\mathrm{H}}
\newcommand{\Lh}{\mathfrak{h}}

\newcommand{\aPol} {\mathsf{Pol}}

\newcommand{\bR} {\mathbf{k}}
\newcommand{\bH} {\mathsf{H}}

\newcommand{\Hom}{\mathsf{ Hom}}

\newcommand{\Ire}{I^{\rm re}}
\newcommand{\Iim}{I^{\rm im}}

\newcommand{\sg}{\mathrm{S}}
\newcommand{\weyl}{\mathrm{W}}
\newcommand{\id}{{e}}
\newcommand{\qin}{{\rm in}}
\newcommand{\qout}{{\rm out}}

\newcommand{\E}{{\mathcal{E}}}

\newcommand{\F}{{\mathcal{F}}}

\newcommand{\tF}{{\widetilde{\mathcal{F}}}}

\newcommand{\vZ}{{\mathcal{Z}}}

\newcommand{\vO}{{\mathcal{O}}}
\newcommand{\ve}{{\mathfrak{e}}}

\newcommand{\im}{{\rm im}}

\newcommand{\eu}{{\rm eu}}

\newcommand{\Ind}{{\rm Ind}}
\newcommand{\Res}{{\rm Res}}

\newcommand{\proj}{\mathsf{pmod}}

\nc{\soplus}{\mathop{\mbox{\normalsize$\bigoplus$}}\limits}
\nc{\sotimes}{\mathop{\mbox{\normalsize$\bigotimes$}}\limits}
\nc{\bl}{\bigl(}
\nc{\br}{\bigr)}
\nc{\tens}{\mathop\otimes\limits}
\nc{\ba}{\begin{array}}
\nc{\ea}{\end{array}}
\nc{\eq}{\begin{eqnarray}}
\nc{\eneq}{\end{eqnarray}}
\nc{\eqn}{\begin{eqnarray*}}
\nc{\eneqn}{\end{eqnarray*}}
\nc{\lan}{\langle}
\nc{\ran}{\rangle}
\nc{\be}{\begin{enumerate}}
\nc{\ee}{\end{enumerate}}
\nc{\bnum}{\be[{\rm(i)}]}
\nc{\noi}{\noindent}
\nc{\bal}[1][w]{\mathfrak{b}_{\alpha,#1}}
\nc{\znu}{\nu^\circ}
\nc{\hs}{\hspace*}
\nc{\vs}{\vspace*}
\nc{\seteq}{\mathbin{:=}}
\nc{\cl}{\colon}
\nc{\To}[1][\quad]{\xrightarrow{\;#1\;}}
\nc{\oO}{\overline{\vO}}
\nc{\bF}{\mathbb{F}}

\newcommand{\isoto}[1][]{\mathop{\xrightarrow[#1]%
{{\raisebox{-.6ex}[0ex][-.6ex]{$\mspace{2mu}\sim\mspace{2mu}$}}}}}


\begin{document}

\title[Geometric realization of KLR algebras]
{Geometric realization of Khovanov-Lauda-Rouquier algebras
associated with Borcherds-Cartan data}

\author[Seok-Jin Kang]{Seok-Jin Kang$^{1}$}
\thanks{$^1$ This work was supported by KRF Grant \# 2007-341-C00001 and NRF Grant \# 2010-0019516.}
\address{Department of Mathematical Sciences and Research Institute of Mathematics,
Seoul National University, 599 Gwanak-ro, Gwanak-gu, Seoul 151-747, Korea}
\email{sjkang@snu.ac.kr}

\author[Masaki Kashiwara]{Masaki Kashiwara$^{2}$}
\thanks{$^2$ This work was supported by Grant-in-Aid for
Scientific Research (B) 22340005, Japan Society for the Promotion of
Science.}
\address{Research Institute for Mathematical Sciences, Kyoto University, Kyoto 606-8502,
Japan and Department of Mathematical Sciences, Seoul National University, 599 Gwanakro,
Gwanak-gu, Seoul 151-747, Korea}
\email{masaki@kurims.kyoto-u.ac.jp}

\author[Euiyong Park]{Euiyong Park$^{3}$}

\thanks{$^3$ This work was supported
by JSPS Postdoctoral Fellowships for Foreign Researchers.}
\address{Department of Pure and Applied Mathematics, Graduate School of Information
Science and Technology, Osaka University, Toyonaka, Osaka 560-0043, Japan}
\email{pwy@cr.math.sci.osaka-u.ac.jp}

\subjclass[2010]{14F05, 14F43 17B67, 81R10}

\begin{abstract}
We construct a geometric realization of the Khovanov-Lauda-Rouquier
algebra $R$ associated with a  symmetric  Borcherds-Cartan matrix
$A=(a_{ij})_{i,j\in I}$ via quiver varieties.  As an application,
if $a_{ii} \ne 0$ for
any $i\in I$, we prove that there exists a 1-1 correspondence
between Kashiwara's lower global basis (or Lusztig's canonical basis)
of $U_\A^-(\g)$ (resp.\ $V_\A(\lambda)$) and the set of isomorphism
classes of indecomposable projective graded modules over $R$ (resp.\ $R^\lambda$).
\end{abstract}

\maketitle


\vskip 2em

\section*{Introduction}

The {\it Khovanov-Lauda-Rouquier algebras} (or {\it quiver Hecke
algebras}) were introduced independently by Khovanov-Lauda
\cite{KL09, KL11} and Rouquier \cite{R08} to construct a
categorification of quantum groups associated with symmetrizable
Cartan data. For a dominant integral weight $\lambda \in
\mathsf{P}^+$, Khovanov and Lauda conjectured that the cyclotomic
quotient $R^{\lambda}$ of the Khovanov-Lauda-Rouquier algebra $R$
gives a categorification of the irreducible highest weight module
$V(\lambda)$ \cite{KL09}. Recently, this conjecture was proved by
Kang and Kashiwara \cite{KK11}.  In \cite{Web10} Webster also gave a
proof of this conjecture by a completely different method.

When the Cartan datum is symmetric, Varagnolo-Vasserot \cite{VV11}
and Rouquier \cite{R11} gave a geometric realization of
Khovanov-Lauda-Rouquier algebras via quiver varieties
and proved that the isomorphism
classes of projective indecomposable modules correspond to
Kashiwara's lower global basis (or Lusztig's canonical basis)
\cite{Kas91, Lus90}.

In \cite{KOP11}, Kang, Oh and Park introduced a family of
Khovanov-Lauda-Rouquier algebras $R$ associated with symmetrizable
Borcherds-Cartan data and showed that they provide a
categorification of quantum generalized Kac-Moody algebras and their
crystals. More precisely, let $U_\A^{-}(\g)$ be the integral form of
the negative half of the quantum generalized Kac-Moody algebra
$U_q(\g)$ associated with a  Borcherds-Cartan matrix
$A=(a_{ij})_{i,j\in I}$ and let us set $K_{0}(R)= \bigoplus_{\alpha \in
{\mathsf Q}^{+}} K_0(R(\alpha)\text{-}\proj)$, where
$K_0(R(\alpha)\text{-}\proj)$ is the Grothendieck group of the
category $R(\alpha)\text{-}\proj$ of finitely generated projective
graded $R(\alpha)$-modules. Then it was proved in \cite{KOP11} that
there exists an injective bialgebra homomorphism
$$ \Phi : U_\A^- (\g) \hookrightarrow K_{0}(R) $$
and that $\Phi$ is an isomorphism when $a_{ii} \ne 0$ for any $i\in I$. 

A big difference with the case of Kac-Moody algebras is that the
defining relations of $R$ contain a family of polynomials
$\mathcal{P}_i $ of degree $1-\frac{a_{ii}}{2}$ $(i \in I)$ as
twisting factors for commutation and braid relations. As
we will see in Lemma \ref{Lem: computation for Lambdas}, the
polynomials $\mathcal{P}_i$ have a natural geometric interpretation.


For a dominant integral weight $\lambda \in \mathsf{P}^+$, if
$a_{ii} \ne 0 $ for any $i\in I$, it was proved in \cite{KKO11} that
the cyclotomic quotient $R^\lambda$ of $R$ provides a
categorification of the irreducible highest weight $U_q(\g)$-module
$V(\lambda)$. That is, there is a $U_{\A}(\g)$-module isomorphism
$$ \Phi^\lambda : V_\A(\lambda) \buildrel \sim \over \longrightarrow
K_{0}(R^\lambda)\seteq\bigoplus_{\alpha \in \mathsf{Q}^+}
K_0(R^\lambda(\alpha)\text{-}\proj), $$ where
$K_0(R^\lambda(\alpha)\text{-}\proj)$ is the Grothendieck group of
the category of finitely generated projective graded
$R^\lambda(\alpha)$-modules.

In this paper, following the framework of \cite{VV11}, we construct
a geometric realization of Khovanov-Lauda-Rouquier algebras
associated with symmetric Borcherds-Cartan data via quivers
possibly with loops. One of the main ingredients
is Steinberg-type varieties arising from quivers. As
an application, when $a_{ii}\ne 0$ for any $i\in I$, we prove that the
isomorphism $\Phi$ (resp.\ $\Phi^\lambda$) gives a 1-1
correspondence between Kashiwara's lower global basis (or Lusztig's
canonical basis) of $U_\A^{-}(\g)$ (resp.\ $V_\A(\lambda)$) given in
\cite{KS06} and the set of isomorphism classes of indecomposable
projective $R$-modules (resp.\ indecomposable projective
$R^\lambda$-modules).

Let us explain our results more precisely.
Let $Q = (I, \Omega)$ be an
arbitrary locally finite quiver with a vertex set $I$ and an
oriented edge set $\Omega$. The edge set $\Omega$ may have loops
which will give a geometric interpretation of the polynomials
$\mathcal{P}_i$ in the definition of $R$.

Let $\alpha \in \mathsf{Q}^+$ with $|\alpha|=m$ and $I^\alpha = \{
\nu=(\nu_1, \ldots, \nu_m) \in I^m \mid \alpha = \alpha_{\nu_1} +
\cdots + \alpha_{\nu_m} \}$. We fix an $I$-graded vector space
$V_\alpha = \bigoplus_{i\in I} V_i$ with $\underline{\dim}(V_\alpha)
= \alpha$. Let $\E_\alpha$ be the set of all representations of $Q$
with dimension vector $\alpha$, let $\F_\nu$ be the set of complete
flags of type $\nu \in I^\alpha$, and let $ \tF_\nu = \{ (x, F) \in
\E_\alpha \times \F_\nu \mid F \text{ is {\it strictly $x$-stable}}
\}$. Set
$$ \LG_\alpha^\Omega = \LG_\alpha \times \LH^\Omega, $$
where $\LG_\alpha = \prod_{i \in I} GL(V_i) $ and $\LH^{\Omega}$ is
the torus corresponding to the edge set $\Omega$ defined by
$$ \LH^{\Omega} = \prod_{a\in \Omega} \C^*. $$
The group $\LG_\alpha$  acts on $\E_\alpha$ by conjugation  and
transitively on $\F_\nu$,
while $\LH^{\Omega}$ acts on $\E_\alpha$ by multiplication
and trivially on $\F_\nu$.

For $\nu,\nu' \in I^\alpha$, we first define the Steinberg-type
variety
$$\vZ_{\nu,\nu'} = \tF_\nu \times_{\E_\alpha} \tF_{\nu'},$$ and then
consider the convolution algebra $\gR(\alpha)$ and its  polynomial
representation  $\gPol(\alpha)$ as follows:
$$ \gR(\alpha) = \bigoplus_{\nu, \nu' \in I^\alpha}
H_*^{\LG_\alpha^\Omega}(\vZ_{\nu,\nu'}) \langle -2\dim_\C \tF_{\nu} \rangle,\quad
\gPol(\alpha) = \soplus_{\nu \in I^\alpha} H^*_{\LG_\alpha^\Omega}(\tF_{\nu}). $$

In Lemma \ref{Lem: inj of R}, we investigate the properties of the
filtration $ \gR_\alpha^{\le w}$ of $\gR(\alpha)$ given in
$\eqref{Eq: def of Rw}$, and in Lemma \ref{Lem: computation for
Lambdas}, using the commutative diagram $\eqref{Eq: localization
diagram1}$, we compute the equivariant
Euler classes of
fixed points in $\tF_\nu$ and $\vZ^{s_j}_{\nu,\nu'}$,
which will lead us to an explicit description of the
$\gR(\alpha)$-action on $\gPol(\alpha)$.
Here, the polynomials $\mathcal{P}_i$ arise naturally from the
computation of the Euler classes relative to the loops. In
Proposition \ref{Prop: actions of Pol}, we  show that
$\gPol(\alpha)$ is a faithful polynomial representation and give
an explicit description of $\gR(\alpha)$-action on $\gPol(\alpha)$.
It turns out that the cohomology ring
$$\gH = H^*_{\LH^\Omega}( {\rm pt}) \simeq \sotimes_{a\in \Omega}\C[\hbar_a]$$
plays the role of a base ring in the definition of
Khovanov-Lauda-Rouquier algebras.
Using the faithful polynomial representation $\gPol(\alpha)$, we finally prove
that $\gR(\alpha)$ is isomorphic to the Khovanov-Lauda-Rouquier
algebra $R(\alpha)$ (Theorem \ref{Thm: main thm}).

Furthermore, in Proposition \ref{Prop: isom Upsilon}, we investigate
the relation between the category $R(\alpha)$-$\proj$ and the
full subcategory $\mathcal{Q}_\alpha$ of
$\D_{\LG_\alpha^\Omega}^b(\E_\alpha)$. In Theorem \ref{Thm: lgb and
PIM for U} and Corollary \ref{Cor: lgb and PIM for V}, we show that, 
when $a_{ii}\ne 0$ for any $i$, the isomorphism $\Phi$ (resp.\
$\Phi^\lambda$) gives a 1-1 correspondence between Kashiwara's lower
global basis (or Lusztig's canonical basis) of $U_\A^{-}(\g)$
(resp.\ $V_\A(\lambda)$) given in \cite{KS06} and the set of isomorphism
classes of indecomposable projective $R$-modules (resp.\
indecomposable projective $R^\lambda$-modules).

\vskip 2em

\section{Khovanov-Lauda-Rouquier algebras} \label{Sec:KLR}

\subsection{Quantum generalized Kac-Moody algebras}

Let $I$ be an index set. A square matrix $\mathsf{A} =
(a_{ij})_{i,j \in I}$ is called a {\em symmetrizable
Borcherds-Cartan matrix} if it satisfies (i) $a_{ii}=2$ or $a_{ii} \in 2\Z_{\le 0}$ for $i\in I$, (ii) $a_{ij}
\in \Z_{\le 0}$ for $i \ne j$, (iii) $a_{ij} = 0 $ if $a_{ji}=0$ for
$i,j \in I$, (vi) there is a diagonal matrix $\mathsf{D} = {\rm
diag}(\mathsf{d}_i \in \Z_{>0} \mid i\in I)$ such that
$\mathsf{D}\mathsf{A}$ is symmetric. Let $\Ire = \{  i\in I \mid a_{ii} = 2\}$ and $\Iim = I \setminus \Ire$.

A {\it symmetrizable Borcherds-Cartan datum} $(\mathsf{A},
\mathsf{P}, \Pi, \Pi^{\vee})$ consists of
\begin{itemize}
\item[(1)] a symmetrizable Borcherds-Cartan matrix $\mathsf{A}$,
\item[(2)] a free abelian group $\mathsf{P}$, called the {\it weight lattice},
\item[(3)] the set $\Pi = \{ \alpha_i \mid i\in I \} \subset \mathsf{P}$ of {\it simple roots},
\item[(4)] the set $\Pi^{\vee} = \{ h_i \mid i\in I\} \subset \mathsf{P}^{\vee} \seteq\Hom(\mathsf{P}, \Z)$ of {\it simple coroots},
\end{itemize}
which satisfy the following properties:
\begin{itemize}
\item[(i)] $\langle h_i, \alpha_j \rangle \seteq\alpha_j(h_i) = a_{ij}$ for all $i,j\in I$,
\item[(ii)] $\Pi \subset \h^*$ is linearly independent, where $\h\seteq\C \otimes_{\Z} \mathsf{P}^{\vee}$,
\item[(iii)] for each $i \in I$, there exists $\Lambda_i \in \mathsf{P}$ such that $\langle h_j, \Lambda_i \rangle = \delta_{ij}$ for all $j\in I$.
\end{itemize}
We denote by $\mathsf{P}^+ = \{ \lambda \in \mathsf{P} \mid \lambda(h_i) \in
\Z_{\ge 0}, \ i \in I \}$ the set of {\it dominant integral
weights}. The free abelian group $\mathsf{Q} = \bigoplus_{i \in I}
\Z \alpha_i$ is the {\it root lattice},  and $\mathsf{Q}^+ = \sum_{i
\in I} \Z_{\ge 0} \alpha_i$ is the {\it positive root lattice}. For
$\alpha=\sum_{i \in I} k_i \alpha_i \in \mathsf{Q}^+$,  the
\emph{height} $|\alpha|$ of $\alpha$ is $\sum_{i \in I} k_i$. There
is a symmetric bilinear form $(\ | \ )$ on $\h^*$ such that
$$ (\alpha_i | \lambda ) = \mathsf{d}_i \langle h_i, \lambda \rangle
 \ \text{ for } \lambda \in \h^*,\ i\in I.$$
Thus we have $ (\alpha_i| \alpha_j) = \mathsf{d}_i a_{ij} \text{ for } i,j\in I. $

Let $q$ be an indeterminate and $m,n \in \Z_{\ge 0}$. For $i\in \Ire$, let $q_i = q^{\mathsf{d}_i}$ and
\begin{equation*}
 \begin{aligned}
 \ &[n]_i =\frac{ q^n_{i} - q^{-n}_{i} }{ q_{i} - q^{-1}_{i} },
 \ &[n]_i! = \prod^{n}_{k=1} [k]_i ,
 \ &\left[\begin{matrix}m \\ n\\ \end{matrix} \right]_i=  \frac{ [m]_i! }{[m-n]_i! [n]_i! }.
 \end{aligned}
\end{equation*}

\begin{Def} \label{Def: GKM}
The {\em quantum generalized Kac-Moody algebra} $U_q(\g)$ associated
with a Borcherds-Cartan datum $(\mathsf{A},\mathsf{P},\Pi,\Pi^{\vee})$ is the associative
algebra over $\C(q)$ with $1$ generated by $e_i,f_i$ $(i \in I)$ and
$q^{h}$ $(h \in \mathsf{P}^{\vee})$ satisfying following relations:
\begin{enumerate}
  \item  $q^0=1$, $q^{h} q^{h'}=q^{h+h'} $ for $ h,h' \in \mathsf{P}^{\vee}$,
  \item  $q^{h}e_i q^{-h}= q^{\langle h, \alpha_i \rangle} e_i$,
          $q^{h}f_i q^{-h} = q^{-\langle h, \alpha_i \rangle }f_i$ for
$h \in \mathsf{P}^{\vee}, i \in I$,
  \item  $e_if_j - f_je_i =  \delta_{ij} \dfrac{K_i -K^{-1}_i}{q_i- q^{-1}_i }$
, where $K_i=q_i^{ h_i}$,
  \item  $\displaystyle \sum^{1-a_{ij}}_{k=0} (-1)^k\left[\begin{matrix}1-a_{ij} \\ k\\ \end{matrix} \right]_i e^{1-a_{ij}-k}_i
         e_j e^{k}_i =0$ if $i\in \Ire$ and $i \ne j$,
  \item $\displaystyle \sum^{1-a_{ij}}_{k=0} (-1)^k\left[\begin{matrix}1-a_{ij} \\ k\\ \end{matrix} \right]_i f^{1-a_{ij}-k}_if_jf^{k}_i=0$  if $i \in \Ire$ and $i \ne j$,
  \item $ e_ie_j - e_je_i=0$, $f_if_j-f_jf_i =0$ if $a_{ij}=0$.
\end{enumerate}
\end{Def}


We denote by $U_q^-(\g)$ the subalgebra of
$U_q(\g)$ generated by the elements  $f_i$ ($i\in I$).
Let us set $\A =\Z[q,q^{-1}]$ and we denote by $U_{\A}^-(\g)$
the $\A$-subalgebra
of $U_q(\g)$ generated by $f_i^{(n)}$ ($i\in\Ire$ and $n\in\Z_{>0}$)
and $f_i$ ($i\in\Iim$).

For a dominant integral weight $\lambda \in \mathsf{P}^+$,
let $V(\lambda)$ be the irreducible highest weight $U_q(\g)$-module with highest weight $\lambda$
and let $V_\A(\lambda)$ denote
the $U_{\A}^-(\g)$-submodule of $V(\lambda)$ generated
by the highest weight vector.

\vskip 1em

\subsection{Khovanov-Lauda-Rouquier algebras}

For $\alpha \in \mathsf{Q}^+$ with $|\alpha|=m$, let
$ I^\alpha = \{ \nu=(\nu_1, \ldots, \nu_m) \in I^m \mid \alpha_{\nu_1} + \cdots + \alpha_{\nu_m} = \alpha \}$.
The symmetric group $\sg_m = \langle s_k \mid k =1, \ldots, m-1
\rangle$ acts naturally on $I^\alpha$; i.e., for $w\in \sg_m$ and
$\nu =(\nu_1, \ldots, \nu_m) \in I^\alpha$,
\begin{align} \label{Eq: action of Sm}
w \nu = (\nu_{w^{-1}(1)}, \ldots, \nu_{w^{-1}(m)}).
\end{align}

Let $\bR = \soplus_{n \in \Z} \bR_n$ be a commutative graded ring
such that $\bR_n = 0$ for $n < 0$.
 The symmetric group  $\sg_m$ acts on the polynomial ring
$\bR[x_1,\ldots,x_m]$ by
\eqn
&&
w \bl f(x_1,\ldots,x_m)\br = f(x_{w(1)}, \ldots,x_{w(m)})
\quad\text{for $w\in \sg_m$ and $f(x_1,\ldots,x_m) \in \bR[x_1,\ldots,x_m]$.}
\eneqn
For $t=1,\ldots, m-1$, define the
operator $\partial_t$ on $\bR[x_1,\ldots,x_m]$ by
$$ \partial_t(f) = \frac{s_t f - f}{ x_{t} - x_{t+1} } $$
for $f \in \bR[x_1,\ldots,x_m]$.

We take a matrix $\left( \mathcal{Q}_{i,j}(u,v) \right)_{i,j\in I}$
in $\bR[u,v]$ such that $\mathcal{Q}_{i,j}(u,v) = \mathcal{Q}_{j,i}(v,u)$ and $\mathcal{Q}_{i,j}(u,v)$ has the form
\begin{align*}
\mathcal{Q}_{i,j}(u,v) = \left\{
                 \begin{array}{ll}
                   \sum_{p,q \ge 0} t_{i,j;p,q} u^pv^q & \hbox{if } i \ne j,\\
                   0 & \hbox{if } i=j,
                 \end{array}
               \right.
\end{align*}
where $t_{i,j;p,q} \in \bR_{-2(\alpha_i | \alpha_j)- 2\mathsf{d}_ip -2\mathsf{d}_jq}$ and
$t_{i,j;-a_{ij},0}\in \bR_0^\times$.
For each $i\in I$, we choose a polynomial $\mathcal{P}_i(u,v) \in \bR[u,v]$ having the form
\begin{align} \label{Eq: def of Pi}
\mathcal{P}_i(u,v) = \sum_{p,q \ge 0}  h_{i;p,q}  u^pv^q,
\end{align}
where  $h_{i;p,q} \in \bR_{\mathsf{d}_i(2- a_{ii})-2 \mathsf{d}_ip -
2 \mathsf{d}_iq}$ and $h_{i;1- \frac{a_{ii}}{2},0}$, $h_{i;0,1- \frac{a_{ii}}{2}}\in
\bR_0^{\times}$.

\begin{Def}[\cite{KKO11,KOP11}] \label{def:KLR}
Let $(\mathsf{A},\mathsf{P},\Pi,\Pi^\vee)$ be a Borcherds-Cartan datum.
For $\alpha\in \mathsf{Q}^+$ with height $m$,
the {\em Khovanov-Lauda-Rouquier algebra $R(\alpha)$} of weight $\alpha$
 associated with the data $(\mathsf{A},\mathsf{P},\Pi,\Pi^\vee)$,
$(\mathcal{P}_i)_{i\in I}$ and $(\mathcal{Q}_{i,j})_{i,j\in I}$
is the associative graded $\bR$-algebra
generated by $\mathsf{e}(\nu)\ (\nu \in I^\alpha)$,
$\mathsf{x}_k\
(1 \le k \le m)$ and $\mathsf{r}_t\ (1 \le t \le m-1)$ satisfying the
following defining relations:

\begin{align*}
& \mathsf{e}(\nu) \mathsf{e}(\nu') = \delta_{\nu,\nu'} \mathsf{e}(\nu),\ \sum_{\nu \in I^{\alpha}} \mathsf{e}(\nu)=1,\
\mathsf{x}_k \mathsf{e}(\nu) =  \mathsf{e}(\nu) \mathsf{x}_k, \  \mathsf{x}_k \mathsf{x}_l = \mathsf{x}_l \mathsf{x}_k,\\
& \mathsf{r}_t \mathsf{e}(\nu) = \mathsf{e}(s_t( \nu)) \mathsf{r}_t,\  \mathsf{r}_t \mathsf{r}_s = \mathsf{r}_s \mathsf{r}_t \text{ if } |t - s| > 1, \\
&  \mathsf{r}_t^2 \mathsf{e}(\nu) = \left\{
                                                \begin{array}{ll}
                                                  \partial_t\mathcal{P}_{\nu_t}(\mathsf{x}_t,\mathsf{x}_{t+1}) \mathsf{r}_t \mathsf{e}(\nu) & \hbox{ if }  \nu_t = \nu_{t+1}, \\
                                                   \mathcal{Q}_{\nu_t, \nu_{t+1}}(\mathsf{x}_t, \mathsf{x}_{t+1}) \mathsf{e}(\nu) & \hbox{ if } \nu_t \ne \nu_{t+1},
                                                \end{array}
                                              \right. \\
&  (\mathsf{r}_t \mathsf{x}_k - \mathsf{x}_{s_t(k)} \mathsf{r}_t ) \mathsf{e}(\nu) = \left\{
                                                           \begin{array}{ll}
                                                             -  \mathcal{P}_{\nu_t }(\mathsf{x}_t, \mathsf{x}_{t+1}) \mathsf{e}(\nu) & \hbox{if } k=t \text{ and } \nu_t = \nu_{t+1}, \\
                                                               \mathcal{P}_{\nu_t }(\mathsf{x}_t, \mathsf{x}_{t+1}) \mathsf{e}(\nu) & \hbox{if } k = t+1 \text{ and } \nu_t = \nu_{t+1},  \\
                                                             0 & \hbox{otherwise,}
                                                           \end{array}
                                                         \right. \\
&( \mathsf{r}_{t+1} \mathsf{r}_{t} \mathsf{r}_{t+1} - \mathsf{r}_{t} \mathsf{r}_{t+1} \mathsf{r}_{t} )  \mathsf{e}(\nu) \\
& \qquad \qquad = \left\{
                                                                                   \begin{array}{ll}
\mathcal{P}_{\nu_t }(\mathsf{x}_t, \mathsf{x}_{t+2})
\overline{\mathcal{Q}}_{\nu_t,\nu_{t+1}}(\mathsf{x}_t, \mathsf{x}_{t+1},
\mathsf{x}_{t+2})\mathsf{e}(\nu) & \hbox{if } \nu_t = \nu_{t+2} \ne \nu_{t+1}, \\
\overline{\mathcal{P}}_{\nu_t}'( \mathsf{x}_{t}, \mathsf{x}_{t+1}, \mathsf{x}_{t+2})
\mathsf{r}_{t}\mathsf{e}(\nu) +
\overline{\mathcal{P}}_{\nu_t}''( \mathsf{x}_{t}, \mathsf{x}_{t+1}, \mathsf{x}_{t+2}) \mathsf{r}_{t+1}\mathsf{e}(\nu) & \hbox{if } \nu_t = \nu_{t+1} = \nu_{t+2},\\
0 & \hbox{otherwise},
\end{array}
\right.
\end{align*}
where
\begin{align*}
\overline{\mathcal{P}}'_i(u,v,w) &\seteq\frac{\mathcal{P}_{i}(v,u)\mathcal{P}_{i}(u,w)}{(u-v)(u-w)} + \frac{\mathcal{P}_{i}(u,w)\mathcal{P}_{i}(v,w)}{(u-w)(v-w)} - \frac{\mathcal{P}_{i}(u,v)\mathcal{P}_{i}(v,w)}{(u-v)(v-w)}, \\
\overline{\mathcal{P}}''_i(u,v,w) &\seteq -\frac{\mathcal{P}_{i}(u,v)\mathcal{P}_{i}(u,w)}{(u-v)(u-w)} - \frac{\mathcal{P}_{i}(u,w)\mathcal{P}_{i}(w,v)}{(u-w)(v-w)} + \frac{\mathcal{P}_{i}(u,v)\mathcal{P}_{i}(v,w)}{(u-v)(v-w)}, \\
\overline{\mathcal{Q}}_{i,j}(u,v,w)
&\seteq \frac{\mathcal{Q}_{i,j}(u,v) -
\mathcal{Q}_{i,j}(w,v)}{u-w}.
\end{align*}
\end{Def}


For $\nu \in I^{\alpha}$, set
$ \mathsf{x}_k(\nu) = \mathsf{x}_k \mathsf{e}(\nu)$ and $ \mathsf{r}_t(\nu) = \mathsf{r}_t \mathsf{e}(\nu) $.
Then the $\Z$-grading on
$R(\alpha)$ is given by
\begin{align*}
\deg(\mathsf{e}(\nu))=0, \quad \deg(\mathsf{x}_k (\nu))= 2  \mathsf{d}_{\nu_k}, \quad  \deg(\mathsf{r}_t(\nu))= -(\alpha_{\nu_{t}} | \alpha_{\nu_{t+1}}).
\end{align*}

For $\lambda \in \mathsf{P}^+$ and $i\in I$,
let us choose a polynomial $a_i^\lambda(u)$ of the from
$$a_i^\lambda(u)=\sum_{k=0}^{\lambda(h_i)}
c_{i;k}^\lambda u^{\lambda(h_i)-k}$$
where $c_{i;k}^\lambda\in\bR_{2\mathsf{d}_ik}$ and $c_{i;0}^\lambda=1$.
We set $ a^\lambda(\mathsf{x})=\sum_{\nu\in I^\alpha}
a_{\nu_1}^\lambda(\mathsf{x}_1)\mathsf{e}(\nu) $.
Then the {\it cyclotomic Khovanov-Lauda-Rouquier algebra} $R^{\lambda}(\alpha)$ at $\alpha$ is defined to be the quotient algebra
$$ R^{\lambda}(\alpha) = R(\alpha) / R(\alpha) a^\lambda(\mathsf{x})R(\alpha). $$

Assume that $\bR_0$ is a field.
Let
\begin{align}  \label{Eq: Grothedieck gp}
K_0(R)=\bigoplus_{\alpha \in \mathsf{Q}^+} K_0(R(\alpha)\text{-}\proj), \quad
K_0(R^{\lambda})=\bigoplus_{\alpha \in \mathsf{Q}^+} K_0(R^{\lambda}(\alpha)\text{-}\proj),
\end{align}
where $K_0(R(\alpha)\text{-}\proj)$ (resp.\
$K_0(R^{\lambda}(\alpha)\text{-}\proj)$) is the Grothendieck group
of the category $R(\alpha)\text{-}\proj$ (resp.\
$R^\lambda(\alpha)\text{-}\proj$) of finitely generated projective
graded left $R(\alpha)$-modules (resp.\ finitely generated
projective graded left $R^\lambda(\alpha)$-modules). Then $K_0(R)$
(resp.\ $K_0(R^\lambda)$) has the $\A$-module structure induced by
the $\Z$-grading on $R$ (resp.\ $R^\lambda$), where $\A\seteq\Z[q,q^{-1}]$.

For $\beta,\beta' \in \mathsf{Q}^{+}$, we define
\begin{align*}
\mathsf{e}(\beta,\beta') = \sum_{\nu\in I^{\beta}, \nu'\in
I^{\beta'}}  \mathsf{e}(\nu, \nu')\in R(\beta+\beta'),
\end{align*}
where  $\mathsf{e}(\nu, \nu') $ is the idempotent corresponding to
the concatenation of $\nu$ and $\nu'$. For each $i\in I$ and
$\alpha,\beta \in \mathsf{Q}^+$, we define the functors
\eq&&\ba{rl}
\Res_{\alpha,\beta} &: R(\alpha+\beta)\text{-}\proj \longrightarrow R(\alpha)\otimes R(\beta)\text{-}\proj, \\
\Ind_{\alpha,\beta} &: R(\alpha) \otimes R(\beta) \text{-}\proj \longrightarrow R(\alpha+\beta)\text{-}\proj
\ea\eneq
by
\begin{align*}
&\Res_{\alpha,\beta}(N)  = \mathsf{e}(\alpha,\beta)N, \\
&\Ind_{\alpha,\beta}(L) = R(\alpha+\beta) \mathsf{e}(\alpha,\beta) \tens_{R(\alpha)\otimes R(\beta)} L
\end{align*}
for $L \in R(\alpha)\otimes R(\beta)\text{-}\proj $
and $N \in R(\alpha+\beta)\text{-}\proj$.  One can show
that $K_0(R(\alpha)\text{-}\proj )\otimes_\A K_0(R(\beta)\text{-}\proj )
\to K_0( R(\alpha)\otimes R(\beta)\text{-}\proj)$ is an isomorphism, and
$K_{0}(R)$ becomes a bialgebra
\cite{KOP11,KL09}.

For a $\Z$-graded module $M=\bigoplus_{k\in \Z}M_k$ and $t\in
\Z$, let $M\lan t\ran$ be the $\Z$-graded module
defined by $M\langle t \rangle_k = M_{k+t}$.
For each $i\in\Ire$ and $m\ge 0$,
we define the projective graded $R(m \alpha_i)$-module
$P(i^{m})$ to be
$$P(i^{m})=\dfrac{R(m\alpha_i)}
{\sum_{t=1}^{m-1}R(m\alpha_i)\mathsf{r}_t}
\big\langle \frac{m(m-1)(\alpha_i|\alpha_i)}{4}\big\rangle.$$
If $i \in
\Iim$, we define $P(i)=R(\alpha_i)$, the regular representation. The
following theorem was proved in \cite{KOP11} (see also \cite{KL09}).

\begin{Thm}[\cite{KOP11}] \label{Thm: categorification U}
\bnum
\item There exists an injective bialgebra homomorphism $\Phi: U_\A^-(\g) \longrightarrow K_0(R)$
sending $f_i^{(m)}$ to $P(i^{m})$ $(i\in \Ire)$ and $f_i$ to
$P(i)$ $(i \in \Iim)$.

\item If $a_{ii}\ne 0$ for any $i\in I$, then $\Phi$ is an isomorphism.
\end{enumerate}
\end{Thm}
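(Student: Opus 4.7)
My plan is to construct $\Phi$ on the generators of $U_\A^-(\g)$, verify that the defining relations hold at the level of $K_0(R)$, and then establish injectivity via a duality argument and surjectivity, under the hypothesis $a_{ii}\ne 0$, by an induction on the height.

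First, I would endow $K_0(R)=\bigoplus_{\alpha\in\mathsf{Q}^+}K_0(R(\alpha)\text{-}\proj)$ with a bialgebra structure using the induction functors $\Ind_{\alpha,\beta}$ for the product and the restriction functors $\Res_{\alpha,\beta}$ for the coproduct; the key compatibility is a Mackey-type filtration computing $\Res\circ\Ind$ as an iterated extension in terms of $R(\alpha)\otimes R(\beta)$-modules. The map $\Phi$ is then defined on generators by $f_i^{(m)}\mapsto[P(i^{m})]$ for $i\in\Ire$ and $f_i\mapsto[P(i)]=[R(\alpha_i)]$ for $i\in\Iim$, and extended multiplicatively. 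For $\Phi$ to be a well-defined algebra homomorphism, the only nontrivial relations to verify are the quantum Serre relations for $i\in\Ire$, $i\ne j$, and the commutations $f_if_j=f_jf_i$ whenever $a_{ij}=0$; there are no Serre-type obligations involving indices in $\Iim$.

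The Serre relations for $i\in\Ire$ follow essentially as in the original Khovanov--Lauda argument: one exhibits short exact sequences of graded $R((1-a_{ij})\alpha_i+\alpha_j)$-modules whose classes in $K_0$ realize the required alternating sum. Since $i\in\Ire$ forces $a_{ii}=2$, the polynomial $\mathcal{P}_i$ has degree $1-a_{ii}/2=0$ and is merely an invertible scalar, so this computation reduces to the classical case and is unaffected by the $\Iim$-twisting. The commutation relations at $a_{ij}=0$ follow from a direct Mackey calculation together with the vanishing $\mathcal{Q}_{i,j}=0$, which kills the only obstruction to swapping adjacent strands. For the injectivity in (i), I would invoke the non-degenerate bilinear form on $U_\A^-(\g)$ for quantum generalized Kac-Moody algebras constructed in \cite{KS06} and compare it with the natural pairing on $K_0(R)$ given by graded dimensions of $\Hom$-spaces between projective and finite-dimensional irreducible $R$-modules; compatibility of the two pairings, checked on generators, combined with non-degeneracy on the domain, forces $\Phi$ to be injective.

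For the surjectivity statement (ii), assume $a_{ii}\ne 0$ for all $i\in I$. I would argue by induction on $|\alpha|$ that every indecomposable projective $P\in R(\alpha)\text{-}\proj$ lies in the $\A$-span of induction products of the $P(i^{m})$ and $P(i)$, using restriction to cut off a factor of smaller height and the inductive hypothesis. The essential point is that under $a_{ii}\ne 0$ the polynomial $\mathcal{P}_i$ has strictly positive degree $1-\tfrac{a_{ii}}{2}\ge 1$ for every $i\in\Iim$, so that the $R(n\alpha_i)$-module generated by the idempotent $\mathsf{e}(\nu)$ at $\nu=(i,\ldots,i)$ is large enough to account for the full span predicted by Kashiwara's lower global basis of $U_\A^-(\g)$ in that weight space. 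The main obstacle throughout is controlling the twisting polynomials $\mathcal{P}_i$ in the induction/restriction formalism: the Mackey filtration picks up extra $\mathcal{P}_i$-contributions whenever equal indices meet, and these must be tracked carefully through the counting argument. It is precisely the degeneration of this twisting when some $a_{ii}=0$ that causes the surjectivity to fail in general, which explains the hypothesis in (ii).
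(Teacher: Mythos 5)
This theorem is quoted from \cite{KOP11} without proof in the present paper, so there is no in-paper argument to compare against; I will assess your outline on its own terms. Your strategy for part (i) is the standard one and agrees with \cite{KOP11}: endow $K_0(R)$ with a bialgebra structure via $\Ind$, $\Res$ and a Mackey filtration, define $\Phi$ on generators and check the Serre relations (for $i\in\Ire$) and the commutations (for $a_{ij}=0$) by exact sequences of projectives, and deduce injectivity by matching the non-degenerate form on $U_\A^-(\g)$ with the $\Hom$-pairing on the module category. One point you assume silently and should record is the identity $[P(i)]^m=[m]_i!\,[P(i^m)]$ in $K_0$, which is what makes the assignment $f_i^{(m)}\mapsto[P(i^m)]$ forced rather than chosen.

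Part (ii) contains a genuine gap. You identify as ``the essential point'' that $\deg\mathcal{P}_i=1-a_{ii}/2\ge 1$ for $i\in\Iim$, but that inequality holds equally when $a_{ii}=0$, which is precisely the excluded case; positivity of $\deg\mathcal{P}_i$ therefore cannot be what makes the hypothesis work. The relevant dichotomy is between $\deg\mathcal{P}_i\ge 2$ (i.e.\ $a_{ii}\le -2$) and $\deg\mathcal{P}_i=1$ (i.e.\ $a_{ii}=0$), and it is seen through the grading \eqref{Eq: grading of R}: for $\nu_t=\nu_{t+1}=i$ one has $\deg(\mathsf{r}_t\mathsf{e}(\nu))=-a_{ii}$. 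If $a_{ii}=0$ this is degree $0$, and since $\mathsf{r}_t^2\mathsf{e}(\nu)=\partial_t\mathcal{P}_i(\mathsf{x}_t,\mathsf{x}_{t+1})\,\mathsf{r}_t\mathsf{e}(\nu)$ with $\partial_t\mathcal{P}_i$ then a degree-zero scalar in $\bR$ (equal to $-2$ for the geometric choice \eqref{Eq: P and Q}), one gets a nontrivial degree-zero idempotent inside $R(n\alpha_i)$; hence $K_0(R(n\alpha_i)\text{-}\proj)$ has $\A$-rank $>1$ while $(U_\A^-)_{n\alpha_i}=\A f_i^n$ has rank $1$, so $\Phi$ cannot be surjective. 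Conversely, if $a_{ii}\le -2$ the elements $\mathsf{r}_t\mathsf{e}(\nu)$ have strictly positive degree, $R(n\alpha_i)$ is graded local, and $K_0(R(n\alpha_i)\text{-}\proj)$ is free of rank $1$ on $[R(n\alpha_i)]$. Note finally that your induction ``cut off a smaller-height factor by restriction'' makes no progress when $\alpha=n\alpha_i$ (restriction only reproduces $i$'s), so this is exactly where a direct structural analysis of $R(n\alpha_i)$ is unavoidable and where the hypothesis $a_{ii}\ne 0$ actually enters; your sketch elides that step.
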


For a dominant integral weight $\lambda \in \mathsf{P}^+$, we define
the functors
\begin{align*}
E_i^\lambda &: R^\lambda(\beta + \alpha_i)\text{-}\proj \longrightarrow R^\lambda(\beta)\text{-}\proj, \\
F_i^\lambda &: R^\lambda(\beta )\text{-}\proj \longrightarrow R^\lambda(\beta+ \alpha_i)\text{-}\proj
\end{align*}
by
\begin{align*}
E_i^\lambda(N)  = \mathsf{e}(\beta,\alpha_i)N, \qquad
F_i^\lambda(M) = R^\lambda(\beta+ \alpha_i) \mathsf{e}(\beta,\alpha_i) \otimes_{R^\lambda(\beta)}M
\end{align*}
for $M \in R^\lambda(\beta)\text{-}\proj $ and $N \in
R^\lambda(\beta + \alpha_i)\text{-}\proj$, respectively. Then the
functors $E_i^{\lambda}$, $F_i^{\lambda}$ define a
$U_{\A}(\g)$-module structure on $K_{0}(R^\lambda)$. Let
$V_\A(\lambda)$ be the $\A$-form of the irreducible highest weight
$U_q(\g)$-module $V(\lambda)$. The following generalized version of
{\it cyclotomic categorification conjecture} was proved in
\cite{KKO11}.

\begin{Thm}[\cite{KK11,KKO11}] \label{Thm: categorification V}
If $a_{ii}\ne 0$ for any $i\in I$, then there exists a $U_\A(\g)$-module isomorphism $ \Phi^\lambda : V_\A(\lambda) \buildrel \sim \over \longrightarrow K_0(R^{\lambda}) $.
\end{Thm}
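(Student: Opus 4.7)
The plan is to adapt the strategy of Kang--Kashiwara \cite{KK11} to the generalized Kac--Moody setting. The goal is to upgrade $K_0(R^\lambda)$ to a $U_\A(\g)$-module by verifying, at the level of exact functors on $R^\lambda(\alpha)$-$\proj$, the defining relations of $U_q(\g)$ acting on $V(\lambda)$. Once this is done, $\Phi^\lambda$ will be the unique $U_\A(\g)$-linear map sending the highest weight vector $v_\lambda$ to the class $[R^\lambda(0)]$. As a first step I would establish that $E_i^\lambda$ and $F_i^\lambda$ are biadjoint up to an explicit grading shift depending on $\lambda$ and the weight component $\beta$, from the explicit bimodule descriptions of induction and restriction together with the structure of $R^\lambda$ as a graded algebra.

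The heart of the proof is a pair of short exact sequences of functors that categorify the relation $e_if_j - f_je_i = \delta_{ij}(K_i - K_i^{-1})/(q_i - q_i^{-1})$. A Mackey-type filtration on $E_j^\lambda F_i^\lambda$ handles the off-diagonal case $i \ne j$ directly, giving $E_j^\lambda F_i^\lambda \cong F_i^\lambda E_j^\lambda$. For $i=j \in \Ire$, one obtains the familiar pair of sequences (whose direction depends on the sign of $\lambda(h_i) - \langle h_i,\beta\rangle$) involving $F_i^\lambda E_i^\lambda$, $E_i^\lambda F_i^\lambda$, and a direct sum of grading-shifted copies of the identity functor whose cardinality matches the quantum integer $[\lambda(h_i)-\langle h_i,\beta\rangle]_i$. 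For $i=j \in \Iim$ with $a_{ii}\ne 0$, analogous short exact sequences must be constructed, but now the polynomial $\mathcal{P}_i$ of degree $1 - a_{ii}/2$, which appears as a twisting factor in the KLR defining relations, enters nontrivially: the identity-summand contribution is replaced by a term that realizes the ``geometric quantum integer'' $(K_i - K_i^{-1})/(q_i - q_i^{-1})$ directly, without decomposing into $[\,\cdot\,]_i$-factors (which do not make sense for imaginary coroots).

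The main obstacle lies in this imaginary-coroot case: one must carefully analyze the action of $\mathcal{P}_i(\mathsf{x}_m,\mathsf{x}_{m+1})$ on the cyclotomic quotient, and establish a nondegeneracy/faithfulness statement for $R^\lambda(\alpha)$ analogous to the one used by Kang--Kashiwara but accommodating the $\mathcal{P}_i$-twisting. This is precisely the point where the hypothesis $a_{ii}\ne 0$ enters: it ensures that the relevant highest-weight polynomial degree is nontrivial in a way that rules out unexpected collapses of the Mackey filtration on cyclotomic quotients.

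Once the categorical action on $K_0(R^\lambda)$ is in place and the highest-weight property of $[R^\lambda(0)]$ is verified, the universal property of $V_\A(\lambda)$ produces $\Phi^\lambda$. Surjectivity is automatic since every indecomposable projective is a summand of some $F^\lambda_{\nu_1}\cdots F^\lambda_{\nu_m}([R^\lambda(0)])$. For injectivity I would use the commutative square
\[
\begin{array}{ccc}
U_\A^-(\g) & \xrightarrow{\;\Phi\;} & K_0(R) \\
\Big\downarrow & & \Big\downarrow \\
V_\A(\lambda) & \xrightarrow{\;\Phi^\lambda\;} & K_0(R^\lambda)
\end{array}
\]
in which the top map is an isomorphism by Theorem \ref{Thm: categorification U} (this is where $a_{ii}\ne 0$ is invoked a second time), the left vertical is the surjection $u\mapsto u\cdot v_\lambda$, and the right vertical is induced by $R\twoheadrightarrow R^\lambda$. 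Matching kernels weight space by weight space, using the character of $V(\lambda)$ given by the Weyl--Kac formula and the character of $K_0(R^\lambda)$ computed from the Mackey machinery above, then shows that the induced map $\Phi^\lambda$ is injective and hence an isomorphism.
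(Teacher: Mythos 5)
The paper does not prove this theorem: it is stated with a citation to \cite{KK11,KKO11} and used as a black box, so there is no internal proof here against which to compare your sketch.

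Your outline does capture the overall architecture of the argument in those cited references (biadjointness of $E_i^\lambda,F_i^\lambda$ up to shift, Mackey-type short exact sequences of functors categorifying $[e_i,f_j]$, the role of $\mathcal{P}_i$ in the imaginary case, and passage to $V_\A(\lambda)$ via the highest-weight property of $[R^\lambda(0)]$). The most significant omission is the bimodule projectivity lemma that is the technical heart of \cite{KK11} and, in adapted form, of \cite{KKO11}: one must first prove that $R^\lambda(\beta+\alpha_i)\mathsf{e}(\beta,\alpha_i)$ is projective as a right $R^\lambda(\beta)$-module (and $\mathsf{e}(\beta,\alpha_i)R^\lambda(\beta+\alpha_i)$ as a left one). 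That is exactly what makes $E_i^\lambda$ and $F_i^\lambda$ exact functors preserving finitely generated projectives; without it, the short exact sequences of functors you invoke carry no information in $K_0(R^\lambda)$. The nondegeneracy analysis of $\mathcal{P}_i$ and the hypothesis $a_{ii}\ne 0$ are concentrated precisely in that projectivity proof, not primarily in ``ruling out collapses of the Mackey filtration'' as you speculate. Your closing injectivity argument by weight-by-weight character comparison via the Weyl--Kac formula is workable but heavier than necessary: once $K_0(R^\lambda)\otimes_\A\Q(q)$ is shown to be an integrable cyclic highest-weight $U_q(\g)$-module with highest weight $\lambda$, the uniqueness of $V(\lambda)$ among such modules in the generalized Kac--Moody setting already forces the comparison map to be an isomorphism over $\Q(q)$, and only a lattice-matching step remains for the $\A$-forms.
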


In this paper,
in order to give a geometric realization of $R(\alpha)$,
we assume that {\em $\mathsf{A}$ is symmetric.}

Then, we can take $\mathrm{d}_i = 1$ and
$(\alpha_i | \alpha_j) = a_{ij}$ for $i \ne j\in I$. Set $\mathsf{l}_i =
1-a_{ii}/2$ for $i \in I$ and choose non-negative integers
$\mathsf{h}_{ij}$ such that $a_{ij}= -\mathsf{h}_{ij} -
\mathsf{h}_{ji}$ for $i \neq j\in I$. Let
$\Lambda_{i,j}$ ($i,j\in I$) be an
index set of
$\mathsf{h}_{i,j}$ elements for $i\not=j$
and of $\mathsf{l}_{i}$ elements for $i=j$.
We set $\Lambda = \bigsqcup_{i,j\in I} \Lambda_{i,j}$. Let
$\bH$ be the polynomial ring over $\C$ generated by indeterminates
$\hbar_{a}$ ($a\in \Lambda$) with $\deg( \hbar_a) = 2$;
i.e.,
\begin{align} \label{Eq: def of bH}
\bH \simeq  \bigotimes_{a \in \Lambda} \C[\hbar_a].
\end{align}
We take $\bH $ as the base ring $\bR$ and consider the specially chosen
polynomials $ \mathcal{P}_i(u,v)$ and $\mathcal{Q}_{i,j}(u,v)$ in
$\bH[u,v]$ given as follows:

\begin{equation} \label{Eq: P and Q}
\begin{aligned}
\mathcal{P}_i(u,v) &=  \prod_{a \in \Lambda_{i,i}}(u-v+\hbar_a), \\
\mathcal{Q}_{i,j}(u,v) &= \left\{
   \begin{array}{ll}
     0 & \hbox{ if } i =j, \\
     \prod_{a \in \Lambda_{i,j}} (v-u + \hbar_a)
     \prod_{a \in \Lambda_{j,i}} (u-v + \hbar_a) & \hbox{ if }  i \ne j.
   \end{array}
 \right.
\end{aligned}
\end{equation}
 In this case, the algebras $R(\alpha)$ has another $\Z$-grading
given as follows:
\begin{align} \label{Eq: grading of R}
\deg(\mathsf{e}(\nu))=0, \quad \deg(\mathsf{x}_k (\nu))= 2, \quad  \deg(\mathsf{r}_l(\nu))= \left\{
                                                                                              \begin{array}{ll}
                                                                                                2(\mathsf{l}_{\nu_l}-1) & \hbox{ if } \nu_l = \nu_{l+1}, \\
                                                                                                2\mathsf{h}_{\nu_{l}, \nu_{l+1}} & \hbox{ if } \nu_l \ne \nu_{l+1}.
                                                                                              \end{array}
                                                                                            \right.
\end{align}

We now construct a faithful graded polynomial representation of $R(\alpha)$ associated with $ \mathsf{A}$, $\eqref{Eq: P and Q}$ and $\eqref{Eq: grading of R}$.
Set
\begin{align*}
\aPol(\alpha) = \bigoplus_{\nu \in I^{\alpha}} \bH[\mathsf{x}_1(\nu), \ldots, \mathsf{x}_m(\nu)].
\end{align*}
with $\deg(\mathsf{x}_k(\nu))=2$.
For $f \in \bH[x_1,\ldots,x_m]$, we denote by
$f(\nu)$ the element $f(\mathsf{x}_1(\nu),\ldots,\mathsf{x}_m(\nu)) \in
\bH[\mathsf{x}_1(\nu), \ldots, \mathsf{x}_m(\nu)]$.

Then the following proposition is proved in \cite{KOP11,KL09,R08}.
\begin{Prop} \label{Prop: faithful}
The algebra $\aPol(\alpha)$ has a graded $R(\alpha)$-module structure
as follows:
for $\nu=(\nu_1,\ldots,\nu_m),\ \nu' \in I^{\alpha}$ and
$f\in \bH[{x}_1, \ldots,{x}_m]$,

\begin{equation} \label{Eq: actions in aPol}
\begin{aligned}
\mathsf{e}(\nu) \cdot f(\nu') &= \delta_{\nu,\nu'} f(\nu), \\
\mathsf{x}_k(\nu) \cdot f(\nu') &= \delta_{\nu,\nu'} \mathsf{x}_k(\nu)f(\nu),\\
\mathsf{r}_l(\nu) \cdot f(\nu') &= \left\{
                                     \begin{array}{ll}
                       0 & \hbox{ if } \nu \ne \nu', \\
       \Bigl( \prod_{a\in \Lambda_{\nu_l,\nu_{l+1} }} (\mathsf{x}_{l}(s_l\nu)-\mathsf{x}_{l+1}(s_l\nu)+\hbar_a )\Bigr) (s_lf)(s_l\nu) & \text{if $\nu = \nu'$,
$s_l\nu \ne \nu$,} \\
 \Bigl( \prod_{a\in \Lambda_{\nu_l,\nu_{l} }} (\mathsf{x}_{l}( \nu)-\mathsf{x}_{l+1}( \nu)+ \hbar_a)\Bigr) (\partial_l f)(\nu) & \text{if $\nu = \nu'$, $s_l\nu = \nu$.}
                                     \end{array}
                                   \right.
\end{aligned}
\end{equation}
Moreover, $\aPol(\alpha)$ is a faithful $R(\alpha)$-module.
\end{Prop}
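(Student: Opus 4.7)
The plan is to prove the two claims separately: well-definedness of the $R(\alpha)$-action on $\aPol(\alpha)$, and its faithfulness.

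For well-definedness, the task is to verify each defining relation of Definition~\ref{def:KLR}. The idempotent relations and the commutation relations among the $\mathsf{x}_k$'s are immediate from (\ref{Eq: actions in aPol}), and the distant commutation $\mathsf{r}_t \mathsf{r}_s = \mathsf{r}_s \mathsf{r}_t$ for $|t-s|>1$ follows because the two operators involve disjoint transpositions and disjoint pairs of variables. The remaining relations are verified by direct computation using the Leibniz rule
\[
\partial_t(fg) = s_t(f)\,\partial_t(g) + \partial_t(f)\,g, \qquad \partial_t^2 = 0,
\]
together with the factorization of $\mathcal{P}_i$ and $\mathcal{Q}_{i,j}$ given in (\ref{Eq: P and Q}). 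For the quadratic relation, if $\nu_t = \nu_{t+1}$ then two applications of $\mathsf{r}_t$ produce $\mathcal{P}_{\nu_t}\cdot \partial_t\bl \mathcal{P}_{\nu_t}\,\partial_t(f)\br$; by $\partial_t^2=0$ this collapses to $\mathcal{P}_{\nu_t}\cdot \partial_t(\mathcal{P}_{\nu_t})\,\partial_t(f)$, matching $\partial_t(\mathcal{P}_{\nu_t})\,\mathsf{r}_t(f)$. If instead $\nu_t \neq \nu_{t+1}$, then one application of $\mathsf{r}_t$ lands in the $s_t\nu$-component and the second returns to $\nu$, with the two products of twist factors combining precisely into $\mathcal{Q}_{\nu_t,\nu_{t+1}}$. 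The mixed $\mathsf{r}_t \mathsf{x}_k$ relation is a consequence of $\partial_t(\mathsf{x}_t f) = \mathsf{x}_{t+1}\partial_t(f) - f$ and $\partial_t(\mathsf{x}_{t+1} f) = \mathsf{x}_t\partial_t(f) + f$, together with commutativity for $k \notin\{t,t+1\}$.

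The braid relation is the most computationally involved and is where the main obstacle lies. After splitting by the incidences among $\nu_t, \nu_{t+1}, \nu_{t+2}$, each case becomes a rational-function identity in $\bH[\mathsf{x}_t,\mathsf{x}_{t+1},\mathsf{x}_{t+2}]$. In the case $\nu_t = \nu_{t+2}\neq \nu_{t+1}$, both $\mathsf{r}_{t+1}\mathsf{r}_t\mathsf{r}_{t+1}$ and $\mathsf{r}_t\mathsf{r}_{t+1}\mathsf{r}_t$ act on $f(\nu)$ by polynomial multiplication (both routes return to the $\nu$-component), and comparing the two products of twist factors against $\mathcal{P}_{\nu_t}(\mathsf{x}_t,\mathsf{x}_{t+2})\overline{\mathcal{Q}}_{\nu_t,\nu_{t+1}}$ proceeds by an $\mathsf{x}_{t+1}$-division identity. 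In the case $\nu_t = \nu_{t+1} = \nu_{t+2}$, one expands the triple iterated Demazure action and recognizes the three rational fractions defining $\overline{\mathcal{P}}'_i$ and $\overline{\mathcal{P}}''_i$ as the exact partial-fraction decomposition needed to match the coefficients of $\mathsf{r}_t$ and $\mathsf{r}_{t+1}$. The remaining mixed cases are handled similarly but are easier, as either one side is zero or the other is a simple rearrangement.

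For faithfulness, I would use the fact, deduced from the defining relations as in~\cite{KOP11,KL09}, that $R(\alpha)\mathsf{e}(\nu)$ is spanned over $\bH[\mathsf{x}_1,\ldots,\mathsf{x}_m]$ by $\{\mathsf{r}_{\underline w}\mathsf{e}(\nu)\mid w\in\sg_m\}$, where $\mathsf{r}_{\underline w}$ denotes the product corresponding to a fixed reduced expression of $w$. Any element of $R(\alpha)$ can thus be written as $\phi = \sum_{w,\nu} f_{w,\nu}(\mathsf{x})\,\mathsf{r}_{\underline w}\mathsf{e}(\nu)$. Assuming $\phi$ acts as zero on $\aPol(\alpha)$, evaluate $\phi$ on $1\in \aPol(\alpha)_\nu$ for each $\nu$. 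Each summand lands in a specific flag component determined by the steps of the reduced expression at which $s_l\nu \neq \nu$, decorated by a product of twist factors $\prod_a(\mathsf{x}_i-\mathsf{x}_j+\hbar_a)$. Since $\bH[\mathsf{x}_1,\ldots,\mathsf{x}_m]$ is an integral domain, these twist factors are not zero-divisors, and an induction on the Bruhat length of $w$ isolates each $f_{w,\nu}$ and forces it to vanish, giving $\phi=0$.
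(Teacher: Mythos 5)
The paper does not prove this proposition; it simply cites \cite{KOP11,KL09,R08} for it, so there is no in-paper argument to compare against. Your outline of the well-definedness part is the standard verification (the Leibniz rule $\partial_t(fg)=s_t(f)\partial_t(g)+\partial_t(f)g$, $\partial_t^2=0$, and the factorizations of $\mathcal{P}_i$, $\mathcal{Q}_{i,j}$ are indeed exactly what is used in those references), and your case splits for the quadratic and braid relations are in the right shape.

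The faithfulness argument, however, has a genuine gap as written. You propose to "evaluate $\phi$ on $1\in\aPol(\alpha)_\nu$" and deduce that each summand of $\sum_w f_{w,\nu}\,\mathsf{r}_{\underline w}\mathsf{e}(\nu)$ produces a nonzero product of twist factors living in a determined flag component. But whenever a letter of the reduced word $\underline w$ is some $s_l$ with $s_l\nu=\nu$, that step acts by $\mathcal{P}_{\nu_l}(\mathsf{x}_l,\mathsf{x}_{l+1})\,\partial_l$, and $\partial_l$ annihilates constants; so $\mathsf{r}_{\underline w}\mathsf{e}(\nu)\cdot 1(\nu)=0$ for \emph{every} such $w$, and $1$ detects nothing. (In the extreme case where all $\nu_l$ are equal, every nontrivial $\mathsf{r}_{\underline w}\mathsf{e}(\nu)$ kills $1$.) To fix this you must test against a spanning set of $\aPol(\alpha)_\nu$ --- e.g.\ all monomials $\mathsf{x}_1^{a_1}\cdots\mathsf{x}_m^{a_m}$ --- and run a genuine leading-term argument in the Bruhat order: for $w_0$ maximal with $f_{w_0,\nu}\ne 0$, one observes that $\mathsf{r}_{\underline{w_0}}\mathsf{e}(\nu)\cdot g(\nu)$ has, after localizing at the non--zero-divisors $\mathsf{x}_i-\mathsf{x}_j$ and $\mathsf{x}_i-\mathsf{x}_j+\hbar_a$, a leading contribution $\bl\prod\text{(twist factors)}\br\cdot (w_0 g)$ in the $w_0\nu$-component, while lower Bruhat terms contribute strictly lower-order operators; choosing $g$ generic (or comparing coefficients as $g$ ranges over monomials) then forces $f_{w_0,\nu}=0$ and the induction proceeds. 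The spanning claim itself ($R(\alpha)\mathsf{e}(\nu)=\sum_w \bH[\mathsf{x}]\,\mathsf{r}_{\underline w}\mathsf{e}(\nu)$) is fine, being a straightening consequence of the defining relations alone, and establishing faithfulness upgrades it to a $\bH[\mathsf{x}]$-basis.
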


\vskip 2em

\section{Geometric realization} \label{Sec:realization}

In this section, we give a geometric realization of $R(\alpha)$. We
first construct the Steinberg-type variety $\vZ_{\nu,\nu'}$ out of a
given locally finite quiver.  We then investigate the convolution
algebra  $\gR(\alpha) = \bigoplus_{\nu, \nu' \in I^\alpha}
H_*^{\LG_\alpha^\Omega}(\vZ_{\nu,\nu'}) \langle -2\dim_\C \tF_{\nu}
\rangle $ arising from $\vZ_{\nu,\nu'}$ and show that the algebra
$\gR(\alpha)$ is isomorphic to the Khovanov-Lauda-Rouquier algebra
$R(\alpha)$ using the faithful representations $\aPol(\alpha)$ and
$\gPol(\alpha)$ given in Proposition \ref{Prop: faithful} and
Proposition \ref{Prop: actions of Pol}.

\subsection{Quiver representation varieties}

Let $Q=(I, \Omega)$ be an arbitrary locally finite quiver with a
vertex set $I$ and an oriented edge set $\Omega$. For  $a \in
\Omega$, let $\qin(a)$ (resp.\ $\qout(a)$) denote the incoming
vertex (resp.\ the outgoing vertex) of $a$. For $i,j, k\in I$ with
$j \neq k$, let
\begin{align*}
& \Omega_{i,j} = \{ a\in \Omega \mid \qout(a)=i,\, \qin(a)=j  \},  \\
&  \ell_{i} = \# \Omega_{i,i} \ \text{ and }\  h_{j,k} = \#\Omega_{j,k}.
\end{align*}

Let $U_q(\g)$ be the quantum generalized Kac-Moody algebra associated with the Borcherds-Cartan matrix $\mathsf{A}=(a_{ij})_{i,j\in I }$ defined by
$$ a_{ii} = 2 - 2 \ell_{i}, \quad a_{jk} =  -h_{jk} -h_{kj}  $$
for $i,j,k \in I$ with $j \neq k$. Note that $\Ire = \{ i \in I \mid
\ell_i = 0 \}$,  $\Iim = \{ i\in I \mid \ell_i \ne 0 \}$.

%

Define the torus $\LH^{\Omega}$ corresponding to the oriented edge set $\Omega$ by
$$ \LH^{\Omega} = \prod_{a\in \Omega} \C^*
$$
and denote its Lie algebra by $\Lh^\Omega$.

Let $\alpha \in \mathsf{Q}^+$ and let $m = |\alpha|$. Fix an
$I$-graded $\C$-vector space $V_\alpha = \bigoplus_{i\in I} V_i $
with $\underline{\dim} V_\alpha  \seteq \sum_{i\in I} (\dim V_i)
\alpha_i = \alpha$. Let $\LG_i = GL(V_i)$ ($i\in I$) and $\LG_\alpha
= \prod_{i\in I} GL(V_i)$. We denote by $\g_\alpha = \bigoplus_{i\in
I}\gl(V_i)$ the Lie algebra of $\LG_\alpha$ and identify $\g_\alpha$
with its dual $\g_\alpha^*$ via $\sum_{i\in I} \mathrm{tr}_{V_i}$.
Let
$$ \LG_\alpha^\Omega = \LG_\alpha \times \LH^\Omega. $$
We define the {\it quiver representation variety} to be
\begin{align*}
\E_\alpha = \soplus_{a\in \Omega} \Hom(V_{\qout(a)}, V_{\qin(a)}).
\end{align*}
The group $\LG_\alpha^\Omega$ acts on $\E_\alpha$ by
\begin{align} \label{Eq: G action}
(g,t) \cdot x = \bl t_a  ( g_{\qin(a)}  x_a  g_{\qout(a)}^{-1}) \br_{a\in \Omega}
\end{align}
for $x = (x_a)_{a\in \Omega} \in \E_\alpha$ and $(g,t) = ((g_i)_{i\in I},  (t_a)_{a\in \Omega})\in \LG_\alpha^\Omega$.

\smallskip
For $\nu=(\nu_1,\ldots,\nu_m) \in I^\alpha$,
let
\begin{align*}
 \F_\nu = \bigl\{ F = (0 = & F_0 \subset F_1  \subset \cdots \subset F_m = V_\alpha)  \\
& \mid \text{$F_k$ is $I$-graded and
$\underline{\dim}(F_k /F_{k-1}) = \alpha_{\nu_k}$ for $k=1,\ldots,m$} \bigr\}
\end{align*}
be the variety of {\it complete flags of type $\nu$} and set $\F_\alpha
= \bigsqcup_{\nu \in I^{\alpha}} \F_\nu$.
For a vector space $W$, we denote by $\F(W)$
the variety consisting of complete flags of $W$.
Then there is an isomorphism
\begin{align} \label{Eq: iso of F}
\F_\nu \buildrel \sim \over \longrightarrow \prod_{i\in I} \F(V_i),
\qquad  F \mapsto ( ^iE   )_{i\in I},
\end{align}
where $^i E_k = V_i \cap F_k $ for $k=1,\ldots,m$ and $ ^i E = (0 = {^iE_0} \subset {^iE_1} \subset \cdots \subset {^iE_m} = V_i )\in\F(V_i)$ for $i\in I$.
The group $\LG_\alpha$ acts transitively
on $\F_\nu$ and $\prod_{i\in I} \F(V_i)$, and $\LH^\Omega$ acts trivially on them,
so that the isomorphism $\eqref{Eq: iso of F}$ is
$\LG_\alpha^\Omega$-equivariant.

For $x \in \E_\alpha$ and $F =(0 =F_0 \subset F_1\subset \cdots \subset F_m) \in \F_\nu$ is said to be {\em strictly $x$-stable} if
$x_a(F_k \cap V_{\qout(a)}) \subset F_{k-1}  \cap V_{\qin(a)} $ for all $a\in \Omega$ and $ k=1,\ldots,m$.
Let
\begin{align*}
 \tF_\nu = \{ (x, F) \in \E_\alpha \times \F_\nu \mid
\text{ $F$ is strictly $x$-stable}    \}
\end{align*}
and let $\tF_\alpha = \bigsqcup_{\nu \in I^{\alpha}} \tF_\nu$. Then
$\LG_\alpha^\Omega$ acts diagonally on $\tF_\nu$ and $\tF_\nu$ is a
$\LG_\alpha^\Omega$-equivariant  vector bundle over $\F_\nu$.

Let us consider the following maps
\eq \label{Eq: maps pi and p}
&&\ba{c}\xymatrix{
   & \ar[ld]_{\pi_\nu}  \tF_{\nu}  \ar[rd]^{\mathrm{p}_\nu}  & \\
 \E_\alpha & & \F_\nu
}\ea
\eneq
defined by $\pi_\nu(x, F) = x $ and $\mathrm{p}_\nu(x,F)=F$ for
$(x,F) \in \tF_{\nu}$. Both of the maps $\pi_\nu$ and
$\mathrm{p}_\nu$ are $\LG_\alpha^\Omega$-equivariant  and $\pi_\nu$
is projective. For $\nu, \nu' \in I^\alpha$, we define the
Steinberg-type variety
\begin{align*}
\vZ_{\nu, \nu'} & = \tF_{\nu} \times_{\E_\alpha} \tF_{\nu'} \\
& \simeq \{ (x, F, F') \in \E_\alpha \times \F_{\nu} \times \F_{\nu'} \mid  (x, F) \in \tF_{\nu}, (x, F') \in \tF_{\nu'} \},
\end{align*}
and let $ \vZ_{\alpha} = \bigsqcup_{\nu, \nu' \in I^\alpha} \vZ_{\nu, \nu'}$.
Consider the map
\begin{align} \label{Eq: map q1}
\mathrm{q}_{\nu, \nu'} : \vZ_{\nu, \nu'} \longrightarrow \F_{\nu} \times \F_{\nu'}
\end{align}
defined by $\mathrm{q}_{\nu, \nu'}(x, F, F')= (F, F')$ for $(x, F, F') \in \vZ_{\nu, \nu'}$, and set
\begin{align} \label{Eq: map q2}
\mathrm{q}_{\alpha} \seteq \bigsqcup_{\nu,\nu' \in I^\alpha} \mathrm{q}_{\nu, \nu'}: \vZ_{\alpha} \longrightarrow \F_{\alpha} \times \F_{\alpha}.
\end{align}
Then  the map $\mathrm{q}_{\nu, \nu'}$ is a
$\LG_\alpha^\Omega$-equivariant morphism.


\bigskip
From now on, we fix $\znu= (\znu_1, \ldots, \znu_m) \in
I^\alpha$ and $ F^\circ\in \F_{ \znu}$. Set $\LG = GL(V_\alpha)$
and let $\LB$ be the stabilizer of $F^\circ$ in $\LG$. Then $\LB$
and $\LB_\alpha \seteq \LB \cap \LG_\alpha$ are Borel subgroups of $\LG$
and $\LG_\alpha$, respectively. Take a maximal torus $\LT$ of
$\LB_\alpha$ and let
$$ \LT^\Omega = \LT \times \LH^\Omega. $$
We denote by $\g$ (resp.\ $\Lb$, $\Lb_\alpha$, $\Lt$) the Lie
algebra of $\LG$ (resp.\ $\LB$, $\LB_\alpha$, $\LT$).

There exist
1-dimensional $\LT$-submodules $\mathbb{F}_1$, \ldots,
$\mathbb{F}_m$ of $V_\alpha$ such that
\begin{align} \label{Eq: def of 1 dim F}
F^\circ = ( 0 \subset  \mathbb{F}_1 \subset \mathbb{F}_1
\oplus \mathbb{F}_2 \subset \cdots \subset \mathbb{F}_1\oplus \cdots
\oplus\mathbb{F}_m = V_{\alpha} ).
\end{align}
Note that $\bF_k\subset V_{\nu^\circ_k}$ for $k=1,\ldots,m$.
For $w, w' \in \sg_m$, set $\nu_w =
(\znu_{w(1)}, \ldots, \znu_{w(m)})$ and
\begin{align} \label{Eq: def of Fw}
&F_w = ( 0  \subset \mathbb{F}_{w(1)} \subset \mathbb{F}_{w(1)} \oplus \mathbb{F}_{w(2)} \subset \cdots \subset \mathbb{F}_{w(1)}\oplus \cdots \oplus\mathbb{F}_{w(m)} = V_\alpha) \in \F_{\nu_w},\\
&F_{w,w'} = (F_w, F_{w'}) \in \F_{\nu_w} \times \F_{\nu_{w'}}.
\end{align}
Let $\LB_{\alpha,w}$ be the stabilizer of $F_w$ in $\LG_\alpha$.
 Let $\weyl$ and $\weyl_\alpha$ be the Weyl group of the pair $(\LG, \LT)$
and $(\LG_\alpha, \LT)$, respectively.
We identify $\weyl$ with the symmetric group $\sg_m$ via $w \cdot \mathbb{F}_k = \mathbb{F}_{w(k)}$
for $w\in \weyl$ and $k=1,\ldots,m$; i.e., $w \cdot F^\circ = F_w$. Note that
 $w \nu_{w'} = \nu_{w'w^{-1}} $ for $w,w' \in \weyl$ by $\eqref{Eq: action of Sm}$.
The quotient set $ \weyl_\alpha \backslash \weyl  $ is in
1-1 correspondence with $ I^\alpha $ via the map sending $w$ to $
\nu_{w}$.

For $w\in \weyl$, set
\begin{align} \label{Eq: def of ew}
\mathfrak{e}_w = \{ x \in \E_\alpha  \mid F_{w}  \text{ is strictly $x$-stable} \}
\simeq \mathrm{p}_{\nu_w}^{-1}(F_w).
\end{align}
By the definition, we have
\eq
&&\ba{rcl}
 \ve_w&=& \{ x \in \E_\alpha  \mid
x ( \mathbb{F}_{w(k)}) \subset \mathbb{F}_{w(1)}
\oplus \cdots \oplus \mathbb{F}_{w(k-1)} \text{ for $1 \le k \le m$}    \}\\
&\simeq&\soplus_{m\ge k>k'\ge1}\mathrm{Hom}( \mathbb{F}_{w(k)} , \mathbb{F}_{w(k')} )
^{\oplus\Omega_{(\nu_w)_k,(\nu_w)_{k'}}} .
\ea\label{eq:cw}
\eneq
Similarly, the Lie algebra $\bal$ of $B_{\alpha,w}$ is calculated as
\eq
&&\ba{rcl}
 \bal&=& \{ x \in \soplus_{i\in I}\mathrm{End}(V_i)\mid
x ( \mathbb{F}_{w(k)}) \subset \mathbb{F}_{w(1)}
\oplus \cdots \oplus \mathbb{F}_{w(k)} \text{ for } 1 \le k \le m  )  \}\\
&\simeq&
\soplus_{\substack{m\ge k\ge k'\ge1,\\(\nu_w)_k=(\nu_w)_{k'}}}
\mathrm{Hom}( \mathbb{F}_{w(k)} , \mathbb{F}_{w(k')} ).
\ea\label{eq:bw}
\eneq

For $w\in \weyl$ and $\nu,\nu' \in I^\alpha$, we define
\begin{alignat*}{2}
\vO^w_{\alpha} &= \LG \cdot F_{e,w} \cap ( \F_{\alpha} \times \F_{\alpha}), &
\quad \overline{ \vO}^w_{\alpha} &= \text{%
the closure of $\vO^w_{\alpha} $ in $\F_{\alpha} \times \F_{\alpha}$,}\\
\vO^w_{\nu, \nu'}& = \vO^w_{\alpha} \cap ( \F_{\nu} \times \F_{\nu'}),& \quad
\quad \overline{ \vO}^w_{\nu, \nu'} &= \overline{\vO}^w_{\alpha} \cap ( \F_{\nu} \times \F_{\nu'}).
\end{alignat*}
 Note that we have a Bruhat decomposition
$ \F_\alpha \times \F_\alpha =
\bigsqcup_{w\in \weyl}\vO^w_{\alpha}$.
For $w,w',w''\in \weyl$, we abbreviate
$$ \vO^w_{w', w''} = \vO^w_{\nu_{w'}, \nu_{w''}}, \quad \quad
\overline{ \vO}^w_{w', w''} = \overline{ \vO}^w_{\nu_{w'},
\nu_{w''}}.$$

\begin{Lem} \label{Lem: properties of O}
Let $w \in \weyl$, $\nu, \nu' \in I^\alpha$ and let $s_i \in \weyl $ be a simple reflection.
\bnum
\item The set of elements of $\vO^w_\alpha$ fixed under the diagonal action of $\LT$ is $\{ F_{u,uw} \mid u\in \weyl \}$.\label{Oitem1}

\item  If $\nu'\not= w^{-1}\nu$, then $ \overline{\vO}^w_{\nu,\nu'} = \emptyset$.
\label{Oitem2}

\item
We have
$$\overline{\vO}^{s_i}_{\alpha}
=\bigsqcup_{\nu\in I^\alpha}
\{(F,F')\in \F_\nu\times\F_{s_i\nu}
\mid F_k=F'_k\quad\text{for any $k\not=i$}\}.$$
In particular, $\overline{\vO}^{s_i}_{\nu, s_i\nu}$
 is smooth and $\dim_{\C} \overline{\vO}^{s_i}_{\nu, s_i\nu}
=\dim_{\C} \F_\nu+\delta_{\nu,s_i\nu}$.
\label{Oitem3}
\item  If $\nu= s_i\nu$, then
$\overline{\vO}^{s_i}_{\nu,\nu} = {\vO}^{s_i}_{\nu,\nu}\cup {\vO}^{e}_{\nu,\nu}$,
and the first and the second projections
$\overline{\vO}^{s_i}_{\nu,\nu}\to\F_\nu$ are $\mathbb{P}^1$-bundles.
\label{Oitem4}
\item If $\nu\ne s_i \nu$, then
$ \overline{\vO}^{s_i}_{\nu,s_i\nu} = {\vO}^{s_i}_{\nu,s_i\nu}$ and
$\overline{\vO}^{s_i}_{\nu,\nu} = \emptyset$.
Moreover the projections
$\overline{\vO}^{s_i}_{\nu,s_i\nu}\to \F_\nu$
and $\overline{\vO}^{s_i}_{\nu,s_i\nu}\to\F_{s_i\nu}$ are isomorphisms.
\label{Oitem5}
\item If $w^{-1}\nu = \nu'$, then $\vO^w_{\nu,\nu'}$
is a $\LG_\alpha$-orbit and it is an affine fibration over $\F_{\nu}$
with fiber being isomorphic to the affine space $\LB_{\alpha,u}\cdot F_{uw} $
for $u\in W$ such that $\nu=\nu_{u}$.
\label{Oitem6}
\end{enumerate}
\end{Lem}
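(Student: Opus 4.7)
The plan is to handle the six parts in an order that exploits the standard Bruhat decomposition of $\F(V_\alpha) \times \F(V_\alpha)$ and then cuts it down to the $I$-graded subvariety $\F_\alpha \times \F_\alpha$. For (i), I use the well-known description of the $\LT$-fixed points of $\F(V_\alpha) \times \F(V_\alpha)$ as the pairs $(F_u, F_{u'})$ with $u, u' \in \weyl$, combined with the characterization of the $\LG$-orbit $\LG \cdot (F^\circ, F_w)$ by relative position: $(F_u, F_{u'})$ lies in this orbit iff $u^{-1}u' = w$, i.e., $u' = uw$, and such fixed points automatically lie in $\F_\alpha \times \F_\alpha$. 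For (ii), the key observation is that $\vO^w_{\nu, \nu'}$ itself is empty unless $\nu' = w^{-1}\nu$: given $(F, F') \in \vO^w_\alpha$ with $F \in \F_\nu$, I pick an $I$-homogeneous basis $v_1, \dots, v_m$ of $V_\alpha$ adapted to $F$ with $v_k \in V_{\nu_k}$, and then $F' \in \F_{\nu'}$ together with $F'_l = \Span(v_{w(1)}, \dots, v_{w(l)})$ forces $\nu'_l = \nu_{w(l)} = (w^{-1}\nu)_l$. Since each $\F_\nu \times \F_{\nu'}$ is a union of connected components of $\F_\alpha \times \F_\alpha$, the emptiness propagates to the closure.

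For (iii), (iv), (v), I specialize $w = s_i$. In the full flag variety $\overline{\vO}^{s_i}$ is the smooth $\mathbb{P}^1$-bundle of pairs $(F, F')$ with $F_k = F'_k$ for every $k \ne i$, and intersecting with $\F_\alpha \times \F_\alpha$ gives the description in (iii). A pair in $\overline{\vO}^{s_i}_{\nu, s_i\nu}$ is then determined by the common subflag $F_k = F'_k$ ($k \ne i$) together with the choice of $I$-graded lines $F_i, F'_i$ inside the $2$-dimensional $I$-graded quotient $F_{i+1}/F_{i-1}$, whose dimension vector is $\alpha_{\nu_i} + \alpha_{\nu_{i+1}}$. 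When $\nu_i = \nu_{i+1}$ the quotient lies in the single weight space $V_{\nu_i}$, so each of $F_i$ and $F'_i$ ranges over a $\mathbb{P}^1$ and both projections become $\mathbb{P}^1$-bundles, yielding (iv); when $\nu_i \ne \nu_{i+1}$ the only $I$-graded lines of the quotient are the two coordinate axes $V_{\nu_i}$ and $V_{\nu_{i+1}}$, so $F_i$ is uniquely determined by type $\nu$ and $F'_i$ by type $s_i\nu$, yielding the isomorphism in (v). Smoothness of $\overline{\vO}^{s_i}_{\nu, s_i\nu}$ and the dimension formula in (iii) follow from this local model.

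Finally, for (vi), I fix $u \in \weyl$ with $\nu_u = \nu$ so that $(F_u, F_{uw}) = u \cdot (F^\circ, F_w) \in \vO^w_{\nu, w^{-1}\nu}$. The first projection $\vO^w_{\nu, w^{-1}\nu} \to \F_\nu$ is $\LG_\alpha$-equivariant, and $\LG_\alpha$ acts transitively on $\F_\nu$ with stabilizer $\LB_{\alpha, u}$ at $F_u$, so it suffices to describe the fiber over $F_u$ as a $\LB_{\alpha, u}$-space. This fiber is the set of $F' \in \F_{w^{-1}\nu}$ with $(F_u, F')$ in relative position $w$; in the full flag variety this is the Schubert cell through $F_{uw}$ for the Borel of $\LG$ stabilizing $F_u$, and I plan to identify its restriction to $\F_{w^{-1}\nu}$ with the $\LB_{\alpha, u}$-orbit $\LB_{\alpha, u} \cdot F_{uw}$, deducing affineness from a parametrization by the root subgroups of $\LB_{\alpha, u}$ not stabilizing $F_{uw}$, whose Lie algebra is captured by the sum description of $\bal[u]$ in \eqref{eq:bw}. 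The main obstacle will be this last identification, since one must verify that the non-$I$-graded part of the full Borel cannot produce new $I$-graded flags when acting on $F_{uw}$; this will follow by root-subgroup bookkeeping using the explicit structure of $\bal[u]$.
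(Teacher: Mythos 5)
Your proposal is correct in substance but diverges from the paper's argument at several points, so a comparison is warranted. The paper proves only \eqref{Oitem2} and \eqref{Oitem6} in detail (referring to Varagnolo--Vasserot for the rest), and it runs both through a single mechanism: every $\LG_\alpha$-orbit in $\F_\alpha\times\F_\alpha$ contains a $\LT$-fixed point, the fixed points of $\vO^w_\alpha$ are classified by \eqref{Oitem1}, and the Weyl-group conjugacy $F_{u',u'w}=v\cdot F_{u,uw}$ (for $v\in\weyl_\alpha$) shows $\vO^w_{\nu,\nu'}$ is a \emph{single} $\LG_\alpha$-orbit, from which both \eqref{Oitem2} and the fiber description in \eqref{Oitem6} drop out at once. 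You instead treat \eqref{Oitem2} by a direct linear-algebra argument (choosing a basis adapted to the pair $(F,F')$ and reading off the types), and \eqref{Oitem6} by intersecting the full Schubert cell $\LB_u\cdot F_{uw}$ with $\F_{\nu'}$. Both routes are valid, but note two points. First, in \eqref{Oitem2} you need a basis $v_1,\dots,v_m$ that is \emph{simultaneously} adapted to $F$, adapted (through $w$) to $F'$, and $I$-homogeneous; this exists because the one-dimensional subquotients $(F_k\cap F'_{w^{-1}(k)})/(F_{k-1}\cap F'_{w^{-1}(k)})$ are $I$-graded and live in $V_{\nu_k}$, but as written you assert $F'_l=\Span(v_{w(1)},\dots,v_{w(l)})$ as if it held for an arbitrary basis adapted to $F$. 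Second, the ``main obstacle'' you flag in \eqref{Oitem6}, namely that $\LB_u\cdot F_{uw}\cap\F_{\nu'}=\LB_{\alpha,u}\cdot F_{uw}$, does not actually require explicit root-subgroup bookkeeping: the left-hand side is $\LB_{\alpha,u}$-stable, hence a union of $\LB_{\alpha,u}$-Bruhat cells in $\F_{\nu'}$, each containing a unique $\LT$-fixed point; but the only $\LT$-fixed point of the full Schubert cell $\LB_u\cdot F_{uw}$ is $F_{uw}$, so only the one cell occurs. (The paper's single-orbit argument is the cleaner way to get the same conclusion.) Finally, in \eqref{Oitem3} your phrase ``intersecting with $\F_\alpha\times\F_\alpha$'' overshoots: that intersection also contains the diagonal components $\vO^e_{\nu,\nu}$ for $\nu\ne s_i\nu$, which do \emph{not} belong to $\overline{\vO}^{s_i}_\alpha$; you must first invoke \eqref{Oitem2} to confine attention to the components $\F_\nu\times\F_{s_i\nu}$, after which your $\mathbb{P}^1$-bundle analysis of \eqref{Oitem4} and \eqref{Oitem5} is correct.
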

\begin{proof}
We shall only prove \eqref{Oitem2}
and \eqref{Oitem6}, since the other assertions
are easily proved (see \cite[Lemma 2.6]{VV11}).
Since $\vO^w_{\nu,\nu'}$ is $\LG_\alpha$-invariant, $\vO^w_{\nu,\nu'}$ is a union of $G_\alpha$-orbits, and any $\LG_\alpha$-orbit of
$\F_\alpha \times \F_\alpha$ contains a $\LT$-fixed point.

We first show the assertion \eqref{Oitem2}. Suppose that
$w^{-1}\nu \ne \nu'$. If $\overline{\vO}^w_{\nu,\nu'}$ is not empty, then $\vO^w_{\nu,\nu'}$ contains a $\LT$-fixed point $F_{u,uw}$ for some $u\in W$.
Then we have $\nu = \nu_{u}$ and $\nu' = \nu_{uw}=w^{-1}\nu_{u}$,
which is a contradiction.

Now consider the case $w^{-1}\nu = \nu'$. Choose a $\LT$-fixed point $F_{u,uw} \in \vO^w_{\nu,\nu'}$ for some $u\in W$. Then $\nu = \nu_u$ and $\nu' = \nu_{uw}$.
If we take another $\LT$-fixed point $F_{u',u'w} \in \vO^w_{\nu,\nu'}$, then there is an element $v\in \weyl_\alpha$
such that
$$v \cdot F_{u} = F_{vu} = F_{u'},$$
which yields $F_{u',u'w} = v \cdot F_{u,uw} \in \LG_\alpha \cdot F_{u,uw}$.
Therefore,
$$ \vO^w_{\nu,\nu'} = \LG_\alpha \cdot F_{u,uw}. $$

Let $\phi^w_{\nu,\nu'}: \vO^w_{\nu,\nu'} \rightarrow \F_{\nu} $ be the map defined by
$\phi^w_{\nu,\nu'}(F,F') = F$ for $(F,F') \in \vO^w_{\nu,\nu'}$. Then $\phi^w_{\nu,\nu'}$ is surjective since $\phi^w_{\nu,\nu'}$ is $\LG_\alpha$-equivariant, and
the fiber of $g \cdot F_{u}$ is given as follows:
\begin{align*}
 {\phi^w_{\nu,\nu'}}^{-1}(g \cdot F_{u}) &= g \cdot \{ (F_{u}, F') \mid F'
\in \LB_{\alpha, u} \cdot F_{uw} \} \\
 & \simeq \LB_{\alpha,u} \cdot F_{uw}. %
\end{align*}
Since $\LB_{\alpha,u}  \cdot F_{uw} $ is an affine space \cite{Brion04},
we obtain \eqref{Oitem6}.
\end{proof}

\subsection{The convolution algebra}

For a given quasi-projective variety $X$ over $\C$ with an action of
a complex linear algebraic group $G$, let $\D^b_G(X)$ be the bounded
$G$-equivariant derived category of sheaves of $\C$-vector spaces on
$X$. Let $\C_X$ be the constant sheaf on $X$ and let $\varpi_X \in
\D^b_G(X)$ be the $G$-equivariant dualizing complex. For any complex
$\mathcal{L} \in \D^b_G(X)$, let $H_G^k(X, \mathcal{L}) =
\Hom_{\D^b_{\LG}(X)}(\C_X, \mathcal{L} [k])$, where $[ \cdot ]$ is
the translation functor.
For $k \in \Z$, define
$$ H_k^G(X) = H_G^k(X, \varpi_X), \qquad  H^G_*(X) = \soplus_{k\in \Z} H^G_k(X) $$
and
$$  H^k_G(X) = H_G^k(X, \C_X),\qquad H_G^*(X) = \soplus_{k\in \Z} H_G^k(X). $$
Note that, if $X \hookrightarrow M$ is a closed embedding into a
smooth variety $M$, then we have $ H_k^G(X) = H_G^{k+2\dim_{\C}M}(M,
M \setminus X)$.
The graded module $ H_G^*(X)$ has a graded ring structure
and $H^G_*(X)$ has a graded $ H_G^*(X)$-module structure.
We refer to \cite{BL94,Brion98,CG97}
for more details on equivariant cohomologies.

We now return to our setting. Let $\gH$ be the ring given by
$$
\gH = H_{\LH^\Omega}^*( {\rm pt}) \simeq \bigotimes_{a\in \Omega} \C[ \hbar_a ],
$$
where $\hbar_a\in H_{\LH^\Omega}^*( {\rm pt}) $ corresponds to
the one-dimensional representation of
the $a$-th factor $\C^*$ of $\LH^\Omega$.
This will play the role of the base ring given in $\eqref{Eq: def of bH}$.
Let
$\gS$ be the ring defined by
\begin{align} \label{Eq: def of S}
\gS = H_{\LG_\alpha^\Omega}^*( {\rm pt}) \simeq \gH \otimes H_{\LG_\alpha}^*( \rm pt).
\end{align}

For $\nu, \nu' \in I^\alpha$, we define
\begin{equation} \label{Eq: def of gR and gPol}
\begin{aligned}
& \gR_{\nu,\nu'} \seteq H_*^{\LG_\alpha^\Omega}( \vZ_{\nu,\nu'} ) \langle -2\dim_\C \tF_{\nu} \rangle,  \ \ \  \gR(\alpha) \seteq\soplus_{\nu,\nu' \in I^\alpha} \gR_{\nu,\nu'}, \\
&\gPol_{\nu} \seteq H^*_{\LG_\alpha^\Omega}(\tF_\nu),   \quad \qquad \qquad  \qquad \quad \gPol(\alpha) \seteq H^*_{\LG_\alpha^\Omega}(\tF_\alpha)\simeq
\soplus_{\nu \in I^\alpha} \gPol_{\nu}.
\end{aligned}
\end{equation}

For $k = 1,\ldots,m$, let $\chi_k \in \Lt^*$ be the weight of the $\Lt$-module induced from the $\LT$-module $\mathbb{F}_k$
given in $\eqref{Eq: def of 1 dim F}$ by differentiation.
Then we have
\eq
&&\ H_{\LT^\Omega}^*(\mathrm{pt})
\simeq \gH \otimes H_{\LT}^{*}(\mathrm{pt})
\simeq  \gH [\chi_1, \ldots, \chi_m] .
\eneq

For $\nu\in I^\alpha$, we denote by $\mathcal{L}_{\nu,k}$
the $G_\alpha^\Omega$-equivariant line bundle on $\F_\nu$ 
%
%
assigning $F_k/F_{k-1}$ to a flag $0=F_0\subset F_1\subset\cdots\subset F_m=V$
in $\F_\nu$.
Let $\chi_k(\nu)$ be
the $\LG_\alpha^\Omega$-equivariant first Chern class
$c_1(\mathrm{p}_\nu^*\mathcal{L}_{ \nu, k})\in H_{\LG_\alpha^\Omega}^2(\tF_\nu )$.
For any $w\in  \weyl$ such that $\nu=\nu_w$,
we have isomorphisms
$$H_{\LG_\alpha^\Omega}^*( \tF_\nu )\simeq
H_{\LG_\alpha^\Omega}^*( \F_\nu )\simeq
H_{\LG_\alpha^\Omega}^*
\bl\LG_\alpha^\Omega/(\LB_{\alpha,w}\times \LH^\Omega)\br
\simeq H^*_{\LT^\Omega}(\mathrm{pt}).$$
 Moreover we see that
\eq
&&\ \left\{\ \parbox{70ex}
{the composition of these isomorphisms coincides with the composition
$$H_{\LG_\alpha^\Omega}^*( \tF_\nu )\to
H_{\LT^\Omega}^*( \tF_\nu)
\to  H_{\LT^\Omega}^*( \{(0, F_w)\})\simeq  H_{\LT^\Omega}^*( {\rm pt})
\simeq\gH[\chi_1, \ldots, \chi_m],$$
 and it sends
$\chi_k(\nu)$ to $\chi_{w(k)}$.}\right.\label{eq:locchi}
\eneq
The last assertion follows from the fact that
the stalk of $\mathrm{p}_\nu^*\mathcal{L}_{\nu, k}$ at $(0,F_w)\in\tF_\nu$
is isomorphic to $\mathbb{F}_{w(k)}$ as a $\LT^\Omega$-module.

Therefore, we have
\eq&&\gPol_{\nu} =H_{\LG_\alpha^\Omega}^*( \tF_\nu )\simeq  \gH
[\chi_1(\nu), \ldots, \chi_m(\nu)]
\label{Eq: isom between flags}
\eneq
as an algebra, and
\begin{equation*}
\begin{aligned}
\gPol(\alpha) & = \soplus_{\nu \in
I^\alpha}  H_{\LG_\alpha^\Omega}^*( \tF_\nu ) \simeq \soplus_{\nu
\in I^\alpha} \gH[\chi_1(\nu), \ldots, \chi_m(\nu)].
\end{aligned}
\end{equation*}

For $ \nu,\nu',\nu'' \in I^\alpha$, let $\mathfrak{p}_{\nu,\nu'}$ (resp.\ $\mathfrak{p}_{\nu',\nu''}$, $\mathfrak{p}_{\nu,\nu''}$) be
the natural projection from $\tF_{\nu} \times \tF_{\nu'} \times \tF_{\nu''} $ to $\tF_{\nu} \times \tF_{\nu'}$
(resp.\ $\tF_{\nu'} \times \tF_{\nu''}$, $\tF_{\nu} \times \tF_{\nu''}$). Since
$\mathfrak{p}_{\nu,\nu'}^{-1}(\vZ_{\nu,\nu'}) \cap \mathfrak{p}_{\nu',\nu''}^{-1}(\vZ_{\nu',\nu''}) \simeq \tF_{\nu} \times_{\E_\alpha} \tF_{\nu'} \times_{\E_\alpha} \tF_{\nu''}$,
the restriction of the natural projection
$$\mathfrak{p}_{\nu,\nu''}: \mathfrak{p}_{\nu,\nu'}^{-1}(\vZ_{\nu,\nu'}) \cap \mathfrak{p}_{\nu',\nu''}^{-1}(\vZ_{\nu',\nu''}) \longrightarrow \vZ_{\nu,\nu''}$$
is proper. Hence, for $c_1 \in
H_p^{\LG_\alpha^\Omega}(\vZ_{\nu,\nu'})$ and $ c_2 \in
H_q^{\LG_\alpha^\Omega}(\vZ_{\nu',\nu''})$, the {\it convolution
product}
$$\star : H_p^{\LG_\alpha^\Omega}(\vZ_{\nu,\nu'}) \times
H_{q}^{\LG_\alpha^\Omega}(\vZ_{\nu',\nu''}) \rightarrow
H_{p+q+2\dim_\C \tF_{\nu'}}^{\LG_\alpha^\Omega}(\vZ_{\nu,\nu''})$$ is
defined by
\begin{equation} \label{eq:convolution}
c_1 \star c_2 =
(\mathfrak{p}_{\nu,\nu''})_*( \mathfrak{p}_{\nu,\nu'}^{*}(c_1) \cap
\mathfrak{p}_{\nu',\nu''}^{*}(c_2) ),
\end{equation}
where $\cap$ is
the intersection pairing \cite{CG97}. Then $\gR(\alpha)$ becomes an
associative graded $\gH$-algebra with the convolution product
$\star$. Moreover $\gPol(\alpha)$ has the graded $\gR(\alpha)$-module
structure arising from the convolution product:
\eqn
H_{p}^{\LG_\alpha^\Omega}(\vZ_{\nu,\nu'}) \otimes
H^{q}_{\LG_\alpha^\Omega}(\tF_{\nu'})
&\to&
H_{p}^{\LG_\alpha^\Omega}(\vZ_{\nu,\nu'}) \otimes
H^{q}_{\LG_\alpha^\Omega}(\vZ_{\nu,\nu'})\\
&\to& H_{p+q}^{\LG_\alpha^\Omega}(\vZ_{\nu,\nu'})
\to H_{p+q}^{\LG_\alpha^\Omega}(\tF_{\nu})
\simeq H^{p+q+2\dim_{\C}\tF_\nu}_{\LG_\alpha^\Omega}(\tF_{\nu}).
\eneqn
Here, the first arrow is the pull-back by the projection
$\vZ_{\nu,\nu'}\to \tF_{\nu'}$, the second arrow is the multiplication
and the last
arrow is the push-forward by the proper map
$\vZ_{\nu,\nu'}\to\tF_{\nu}$.

Recall the maps $\mathrm{q}_{\nu, \nu'}:\vZ_{\nu,\nu'} \rightarrow \F_{\nu} \times \F_{\nu'}$ and
$\mathrm{q}_{\alpha}:\vZ_\alpha \rightarrow \F_\alpha \times \F_\alpha$
 given in $\eqref{Eq: map q1}$ and $\eqref{Eq: map q2}$.
For $w\in \weyl$ and $\nu, \nu' \in I^\alpha $, we define
\begin{equation} \label{Eq: def of Zw}
\begin{aligned}
& \vZ^w_\alpha = \overline{\mathrm{q}_{\alpha}^{-1}(\vO^w_\alpha)}, \ \qquad \vZ^{\le w}_\alpha = \bigcup_{w' \le w} \vZ^{w'}_\alpha, \\
& \vZ^w_{\nu,\nu'} = \vZ^w_\alpha \cap \vZ_{\nu,\nu'}, \quad \vZ^{\le w}_{\nu,\nu'} = \vZ^{\le w}_\alpha \cap \vZ_{\nu,\nu'},
\end{aligned}
\end{equation}
where $\overline{\mathrm{q}_{\alpha}^{-1}(\vO^w_\alpha)}$ is the closure in $\vZ_\alpha$.
Note that $\vZ^{\le w}_\alpha = \bigcup_{w' \le w} \mathrm{q}_{\alpha}^{-1}(\vO^{w'}_\alpha)$ is a closed $\LG_\alpha^\Omega$-subvariety.
\begin{Lem}\label{lem:Zsj}
For $\nu,\nu'\in I^\alpha$ $w\in \weyl$ and $j=1,\ldots, m-1$, we have
\bnum
\item
If $\nu'\not=w^{-1}\nu$, then $\vZ^w_{\nu,\nu'}=\emptyset$.
If $\nu'=w^{-1}\nu$, then $\vZ^w_{\nu,\nu'}$ is an irreducible variety.
\item
\parbox[t]{65ex}{
$\vZ^{s_j}_{\alpha}=\{(x,F,F')\in \E_\alpha\times\F_\alpha\times\F_\alpha$\\
$\hs{17ex}\mid \text{$F_k=F'_k$ for $k\not=j$, $xF_k\subset F_{k-1}$ for
$k\not=j,j+1$ and $xF_{j+1}\subset F_{j-1}$}\}$.
}\\[.5ex]
\item
$\vZ^{s_j}_{\alpha}$ is a smooth variety.
\ee
\end{Lem}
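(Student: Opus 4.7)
The plan is to reduce each assertion to geometric facts about $\overline{\vO}^{s_j}_\alpha$ and $\vO^w_{\nu,\nu'}$ already established in Lemma~\ref{Lem: properties of O}, combined with a direct inspection of the linear conditions cut out on $x\in\E_\alpha$ by strict stability.

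For (i), emptiness when $\nu'\ne w^{-1}\nu$ is immediate, since $\vZ^w_{\nu,\nu'}$ projects under $\mathrm{q}_\alpha$ into $\overline{\vO}^w_{\nu,\nu'}$, which is empty by Lemma~\ref{Lem: properties of O}\,(ii). When $\nu'=w^{-1}\nu$, Lemma~\ref{Lem: properties of O}\,(vi) gives that $\vO^w_{\nu,\nu'}$ is a single $\LG_\alpha$-orbit, hence irreducible. The restriction of $\mathrm{q}_{\nu,\nu'}$ over this orbit is $\LG_\alpha$-equivariant with fiber the linear subspace of $x\in\E_\alpha$ strictly stabilizing both flags; by equivariance its rank is constant, so the preimage is a vector bundle over an irreducible base, hence irreducible, and taking closure preserves irreducibility.

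For (ii), let $X$ denote the right-hand side. For the inclusion $\mathrm{q}_\alpha^{-1}(\vO^{s_j}_\alpha)\subset X$: if $F_k=F'_k$ for $k\ne j$ and $F_j\ne F'_j$, then inside $F_{j+1}=F'_{j+1}$ the two codimension-one subspaces $F_j,F'_j$ both contain $F_{j-1}=F'_{j-1}$, forcing $F_j\cap F'_j=F_{j-1}$; strict stability under $x$ of both flags then gives $xF_{j+1}\subset F_j\cap F'_j=F_{j-1}$, and the other conditions in $X$ are immediate. Since $X$ is closed, $\vZ^{s_j}_\alpha\subset X$. For the reverse inclusion, the key observation is that every condition defining $X$ involves only the flag subspaces at indices $k\ne j$, which are common to $F$ and $F'$; hence $X$ fibers over $\overline{\vO}^{s_j}_\alpha$ as a vector bundle of constant rank on each component $\overline{\vO}^{s_j}_{\nu,s_j\nu}$, and by Lemma~\ref{Lem: properties of O}\,(iii)--(v) each such component is smooth and irreducible, so each component of $X$ is irreducible. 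Finally $\mathrm{q}_\alpha^{-1}(\vO^{s_j}_\alpha)$ is open and dense in $X$ (automatic when $\nu\ne s_j\nu$ by Lemma~\ref{Lem: properties of O}\,(v); an open dense condition on the $\mathbb{P}^1$-fibration when $\nu=s_j\nu$ by~(iv)), so $X=\vZ^{s_j}_\alpha$.

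Assertion (iii) then follows immediately, since $\vZ^{s_j}_\alpha=X$ is a vector bundle over the smooth variety $\overline{\vO}^{s_j}_\alpha$. The main delicate point is verifying in (ii) that all defining conditions of $X$ use only indices $\ne j$ --- this is precisely what gives $X$ the structure of a vector bundle over $\overline{\vO}^{s_j}_\alpha$ and what makes irreducibility and density work uniformly across the components indexed by $\nu\in I^\alpha$.
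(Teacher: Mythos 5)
Your proof follows the same route as the paper's: (i) via Lemma~\ref{Lem: properties of O}\,\eqref{Oitem2} together with the vector-bundle structure of $\mathrm{q}_\alpha^{-1}(\vO^w_\alpha)\to\vO^w_\alpha$; (ii) by showing the right-hand side $X$ is a vector bundle over $\overline{\vO}^{s_j}_\alpha$ in which $\mathrm{q}_\alpha^{-1}(\vO^{s_j}_\alpha)$ is open and dense; (iii) as a direct consequence of (ii). One small imprecision: in justifying density you cite Lemma~\ref{Lem: properties of O}\,\eqref{Oitem4}--\eqref{Oitem5}, which describe the base, but what is needed is that over each $(F,F')\in\vO^{s_j}_\alpha$ the fiber of $X$ equals (not merely contains) the set of $x$ strictly stabilizing both flags; this converse is immediate from the same identity $F_j\cap F'_j=F_{j-1}$ you already used, since $xF_{j+1}\subset F_{j-1}\subset F_j\cap F'_j$ together with $F_j,F'_j\subset F_{j+1}$ recovers strict stability at positions $j,j+1$ for both flags, and this is exactly what the paper's terse appeal to $F_j\cap F'_j=F_{j-1}$ is encoding.
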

\begin{proof}
(i) follows immediately from Lemma~\ref{Lem: properties of O}
\eqref{Oitem2} and the fact that
$\mathrm{q}_{\alpha}^{-1}(\vO^w_\alpha)\to \vO^w_\alpha$ is a vector bundle.

Let us show (ii). The right-hand side is a vector bundle over
$\oO^{s_j}_{\alpha}$ by Lemma~\ref{Lem: properties of O} \eqref{Oitem3}. Since
$F_j\cap F'_j=F_{j-1}$ for $(F,F')\in \vO^{s_j}_{\alpha}$,
$\mathrm{q}_\alpha^{-1}\vO^{s_j}_{\alpha}$ is an open dense subset
of the right-hand side of (ii).

\noi
(iii) follows immediately from (ii).
\end{proof}

For $w\in \weyl$ and $\nu, \nu' \in I^\alpha $, let
\begin{align} \label{Eq: def of Rw}
  \gR^{\le w}_{\nu,\nu'} = H_*^{\LG_\alpha^\Omega}( \vZ^{\le w}_{\nu,\nu'} )
\langle -2\dim_\C \tF_{\nu} \rangle, \qquad
\gR^{\le w}_{\alpha} =  \soplus_{\nu,\nu' \in I^\alpha} \gR^{\le w}_{\nu,\nu'}.
\end{align}
For $w \in \weyl$,
we denote by $[\vZ^{ w}_\alpha] =\sum_{\nu,\nu'}
[\vZ^{w}_{\nu,\nu'}] $ the class in $\gR^{\le w}_\alpha$ arising
from $\vZ^{ w}_\alpha = \bigsqcup_{\nu,\nu'} \vZ^{w}_{\nu,\nu'} $.
The convolution $\star$ gives a left $\gR^{\le \id}_{\alpha}$-module
structure on the space $\gR^{\le w}_{\alpha}$.
Using $ \vZ^{\le\id}_{\nu,\nu} \simeq \tF_{\nu}$, the isomorphism
$\eqref{Eq: isom between flags}$ yields
\begin{equation} \label{Eq: isom of cohomology}
\begin{aligned}
H^{\LG_\alpha^\Omega}_*( \vZ^{\le\id}_{\nu,\nu} )\langle -2\dim_\C \tF_{\nu} \rangle
\simeq H_{\LG_\alpha^\Omega}^*( \vZ^{\le\id}_{\nu,\nu} ) \simeq H_{\LG_\alpha^\Omega}^*( \tF_\nu )
\simeq  \gH[\chi_1(\nu), \ldots, \chi_m(\nu)] .
\end{aligned}
\end{equation}
Let $\varkappa_k(\nu) \in H_{\LG_\alpha^\Omega}^*(  \vZ^{\le\id}_{\nu,\nu} )$ be the
image of $\chi_k(\nu)$ under the isomorphism $\eqref{Eq: isom of
cohomology}$ for $k=1,\ldots,m$. Then  
we have
\begin{equation} \label{Eq: H(Ze)}
\begin{aligned}
\gR^{\le \id}_{\alpha} 
\simeq \soplus_{\nu \in I^\alpha} H_{\LG_\alpha^\Omega}^*( \vZ^{\le\id}_{\nu,\nu})
\simeq \soplus_{\nu \in I^\alpha} \gH[ \varkappa_1(\nu), \ldots, \varkappa_m(\nu)]
\end{aligned}
\end{equation}
as an algebra.

 The following lemma will play an important role in proving our
main result.

\begin{Lem}  \label{Lem: inj of R} \
\bnum
\item The closed embedding $ \vZ^{\le w}_\alpha \hookrightarrow \vZ_\alpha $ induces an injective graded $\gR^{\le \id}_{\alpha}$-bimodule homomorphism $\gR^{\le w}_{\alpha} \hookrightarrow \gR(\alpha)$.\label{coh 1}
\item For $w \in \weyl$, $\gR^{\le w}_{\alpha}$ is a free $\gR_\alpha^{\le \id}$-module of rank
$\#\{ w' \in \weyl \mid w' \le w \}$
and
$$\gR^{\le w}_{\alpha} = \soplus_{w'\le w} \gR^{\le \id}_{\alpha} \star [\vZ^{w'}_\alpha].$$
\item If  $\ell(s_jw) = \ell(w) + 1$, we have
$$[\vZ_\alpha^{s_j}] \star [\vZ_\alpha^{w}] = [\vZ_\alpha^{s_jw}] \quad
\text{in $\gR^{\le s_jw}_{\alpha}/\gR^{< s_jw}_{\alpha}$,} $$
where $\gR^{< u}_{\alpha} \seteq \sum_{u' < u} \gR^{\le u'}_\alpha $ for $u\in \weyl$.
\end{enumerate}
\end{Lem}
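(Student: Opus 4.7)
The plan is to prove (i) and (ii) simultaneously by induction on $\ell(w)$, exploiting the Bruhat stratification
$$ \vZ^{\le w}_\alpha = \bigsqcup_{w' \le w} \mathrm{q}_\alpha^{-1}(\vO^{w'}_\alpha) $$
recorded right after \eqref{Eq: def of Zw}. The base case $w=\id$ is exactly \eqref{Eq: H(Ze)}. For the inductive step, I would apply the long exact sequence in $\LG_\alpha^\Omega$-equivariant Borel--Moore homology to the closed--open decomposition $\vZ^{<w}_\alpha \hookrightarrow \vZ^{\le w}_\alpha \hookleftarrow \mathrm{q}_\alpha^{-1}(\vO^w_\alpha)$, which, after the shift of \eqref{Eq: def of Rw}, reads
$$ \cdots \longrightarrow \gR^{<w}_\alpha \longrightarrow \gR^{\le w}_\alpha \longrightarrow H^{\LG_\alpha^\Omega}_*\bl\mathrm{q}_\alpha^{-1}(\vO^w_\alpha)\br\langle\text{shift}\rangle \longrightarrow \cdots. $$

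The crucial geometric input is that $\mathrm{q}_\alpha^{-1}(\vO^w_\alpha)\to\vO^w_\alpha$ is an $\LG_\alpha^\Omega$-equivariant vector bundle (the fiber over $(F,F')$ being the space of maps strictly stable for both flags), while Lemma~\ref{Lem: properties of O}\,\eqref{Oitem6} shows that $\vO^w_\alpha\to \F_\alpha$ is an affine fibration. Composing these two contractible maps and invoking Thom, $H^{\LG_\alpha^\Omega}_*\bl\mathrm{q}_\alpha^{-1}(\vO^w_\alpha)\br$ is a free $\gR^{\le\id}_\alpha$-module of rank one, generated by the fundamental class. Since $[\vZ^w_\alpha]\in\gR^{\le w}_\alpha$ restricts to that generator (because $\mathrm{q}_\alpha^{-1}(\vO^w_\alpha)$ is open and dense in $\vZ^w_\alpha$), the right-hand map is surjective, the connecting map vanishes, the sequence splits, and $\gR^{\le w}_\alpha = \gR^{<w}_\alpha \oplus \gR^{\le\id}_\alpha\star[\vZ^w_\alpha]$. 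Combined with the inductive basis for $\gR^{<w}_\alpha$ this gives (ii). Applying the same reasoning to the longest element $w_0$ yields $\gR(\alpha)=\gR^{\le w_0}_\alpha$, and freeness forces each $\gR^{\le w}_\alpha\hookrightarrow\gR(\alpha)$ to be injective, proving (i).

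For (iii), I would unwind \eqref{eq:convolution} applied to $c_1=[\vZ^{s_j}_\alpha]$ and $c_2=[\vZ^w_\alpha]$. The support of the convolution is contained in $\mathfrak{p}_{\nu,\nu''}\bl\vZ^{s_j}_\alpha \times_{\E_\alpha}\vZ^w_\alpha\br$, which lies inside $\vZ^{\le s_jw}_\alpha$ by the Bruhat-cell multiplication $\overline{\vO}^{s_j}_\alpha\cdot\overline{\vO}^w_\alpha\subset \overline{\vO}^{s_jw}_\alpha$. Working modulo $\gR^{<s_jw}_\alpha$, only the behavior over the open cell $\vO^{s_jw}_\alpha$ matters; the assumption $\ell(s_jw)=\ell(w)+1$ makes the projection from the fiber product birational onto $\vZ^{s_jw}_\alpha$ (both are irreducible of the same dimension and coincide generically), so the pushforward of the intersection of the two fundamental classes equals $[\vZ^{s_jw}_\alpha]$.

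The main obstacle is the rank-one identification in the second paragraph: one has to match the restriction of $[\vZ^w_\alpha]$ with the Thom generator of $H^{\LG_\alpha^\Omega}_*\bl\mathrm{q}_\alpha^{-1}(\vO^w_\alpha)\br$ while carefully tracking the degree shifts in \eqref{Eq: def of Rw}, since this identification is precisely what kills the boundary map and simultaneously yields both the freeness of (ii) and the injectivity of (i).
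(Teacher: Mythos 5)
Your strategy for (i) and (ii) is essentially the paper's, but your induction is set up a bit loosely: you induct on $\ell(w)$ and apply the long exact sequence to $\vZ^{<w}_\alpha \hookrightarrow \vZ^{\le w}_\alpha \hookleftarrow \mathrm{q}_\alpha^{-1}(\vO^w_\alpha)$, yet $\gR^{<w}_\alpha = H_*^{\LG_\alpha^\Omega}(\vZ^{<w}_\alpha)$ is the Borel--Moore homology of a \emph{union} of several closed strata $\vZ^{\le w'}_\alpha$ with $w'<w$, which is not directly covered by the inductive hypothesis on a single $w'$. The clean way to run this, and what the paper does, is to refine the Bruhat order to a total order $\id=w_1\prec\cdots\prec w_\ell=w$ and stratify one cell at a time, so that at each step the closed subset differs from the previous one by exactly one $\mathrm{q}_\alpha^{-1}(\vO^{w_k}_\alpha)$; then the Thom isomorphism plus the odd-degree vanishing makes the long exact sequence split step by step. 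Your geometric inputs (vector bundle over $\vO^w_\alpha$, affine fibration over $\F_\alpha$, restriction of $[\vZ^w_\alpha]$ to the Thom class) are exactly right; only the bookkeeping of the filtration needs to be tightened.

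For (iii) there is a genuine gap. You correctly locate the support of $[\vZ^{s_j}_\alpha]\star[\vZ^w_\alpha]$ inside $\vZ^{\le s_jw}_\alpha$ and reduce to the open cell, but you only argue that $\mathbf{p}_{13}$ is \emph{birational} onto $\overset{\circ}{\vZ}{}^{s_jw}_\alpha$, concluding that the pushforward of the intersection of the two fundamental classes is $[\vZ^{s_jw}_\alpha]$. This is not enough: the convolution \eqref{eq:convolution} involves the refined intersection product $\mathfrak{p}_{12}^{*}[\vZ^{s_j}_\alpha]\cap\mathfrak{p}_{23}^{*}[\vZ^w_\alpha]$, whose multiplicity along the generic point of $\mathfrak{p}_{12}^{-1}(\vZ^{s_j}_\alpha)\cap\mathfrak{p}_{23}^{-1}(\vZ^w_\alpha)$ is $1$ only if the two subvarieties meet \emph{transversally} there, and you do not address this. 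The paper reduces (iii) precisely to two claims, \eqref{eq:SZ}\eqref{Sitem1} (the isomorphism claim, whose nontrivial part is verifying that strict $x$-stability of $F_1$ and $F_3$ forces that of the middle flag $F_2$) and \eqref{eq:SZ}\eqref{Zitem2} (the transversality, proved fiberwise by checking $E_2=(E_1\cap E_2)+(E_2\cap E_3)$ via the combinatorial inclusion $A_2\subset A_1\cup A_3$). Both of these verifications are missing from your outline, and the second in particular is not a formal consequence of the dimension count you invoke.
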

\begin{proof}
Since (i) is a consequence of (ii), we first prove (ii). Let $\ell = \#\{ w' \in \weyl \mid w' \le w \}$.
We give a total order $\prec$ on $\{ w' \in \weyl \mid w' \le w  \}$ by
$$ \id = w_1 \prec \cdots \prec w_k \prec w_{k+1}\prec \cdots \prec w_{\ell } = w  $$
such that $\ell(w_{k+1}) = \ell(w_k)$ or $\ell(w_{k+1}) = \ell(w_k)+1$ for all $k$.
Set
$$\vZ^{\preceq k } = \bigcup_{i \le k} \vZ_\alpha^{w_i} \ \text{ for } k=  1 ,\ldots, \ell.$$
Then $\vZ^{\preceq k }$ is a closed subset of $\vZ_\alpha$.
Since $ \vZ_\alpha^{\le w} = \vZ^{\preceq \ell}$, it suffices to show that
\begin{align*}
H_{*}^{\LG_\alpha^\Omega}( \vZ^{\preceq k} ) = \bigoplus_{k' \le k} \gR^{\le \id}_{\alpha} \star [\vZ^{w_{k'}}_\alpha], \qquad
H_{2t+1}^{\LG_\alpha^\Omega}( \vZ^{\preceq k} ) = 0
\quad\text{for all $k=1,\ldots,\ell$.}
\end{align*}

We use induction on $k$. Assume $k=1$.
Since $\deg(\varkappa_k(\nu))=2$ for $k=1,\ldots,m$ and $\nu \in I^\alpha$,
$\eqref{Eq: H(Ze)}$ implies
\begin{align*}
H_{*}^{\LG_\alpha^\Omega}( \vZ^{\preceq 1 } ) = \gR^{\le \id}_{\alpha} \star [\vZ^{\id}_\alpha], \qquad
H_{2t+1}^{\LG_\alpha^\Omega}( \vZ^{\preceq 1 } ) = 0\quad\text{for $t\in \Z$.}
\end{align*}
 Suppose $k>1$. By the induction hypothesis, we have
\begin{align} \label{Eq: hypothesis1}
H_{*}^{\LG_\alpha^\Omega}( \vZ^{\preceq k-1} ) = \bigoplus_{k' \le k-1} \gR^{\le \id}_{\alpha} \star [\vZ^{w_{k'}}_\alpha], \qquad
H_{2t+1}^{\LG_\alpha^\Omega}( \vZ^{\preceq k-1} ) = 0\quad\text{for $t\in \Z$.}
\end{align}

Let $\mathrm{q}_k = \mathrm{q}_\alpha |_{\vZ^{\preceq k}}: \vZ^{\preceq k} \rightarrow \F_\alpha \times \F_\alpha$ and consider the following diagram:
$$
\xymatrix{
 {\cdots\hs{1ex} }\ar@{^(->}[r]  &   \vZ^{\preceq k-1} \ar[dr]_{\mathrm{q}_{k-1}}
\ar@{^(->}[r] &  \vZ^{\preceq k} \ar[d]_{\mathrm{q}_{k}} \ar@{^(->}[r] &
\vZ^{\preceq k+1}  \ar[dl]^{\mathrm{q}_{k+1}} \ar@{^(->}[r] & \hs{1ex} \cdots \\
& & \F_\alpha \times \F_\alpha \ar[d]^{\mathrm{pr}_1} & & \\
& & \F_\alpha  & &
}.
$$
Here, $\mathrm{pr}_1: \F_\alpha \times \F_\alpha \rightarrow \F_\alpha$ is the natural projection onto the first factor.
Since $ \vO^{w_k}_\alpha \subset \im(\mathrm{q}_k) $,
we have the surjective maps given below
$$
\xymatrix{
\mathrm{q}_k^{-1}( \vO^{w_k}_{w', w' w_k}) \ar@{->>}[r]^{\ \  \mathrm{q}_k}& \vO^{w_k}_{w',w' w_k} \ar@{->>}[r]^{  \mathrm{pr}_1}& \F_{{\nu}_{w'}}
}\quad\text{for $w'\in \weyl_\alpha \backslash \weyl$.}
$$
By Lemma \ref{Lem: properties of O} \eqref{Oitem6},
the map $\mathrm{q}_k^{-1}( \vO^{w_k}_{w', w' w_k})\to \F_{{\nu}_{w'}}$
is locally trivial with contractible fiber, and hence
we have that
\begin{equation}\label{Eq: Thom iso 1}
\begin{aligned}
 H^{\LG_{\alpha}^\Omega}_*( \mathrm{q}_k^{-1}( \vO^{w_k}_{w',w'w_k}) )
&\simeq H_{\LG_{\alpha}^\Omega}^*( \F_{\nu_{w'}})
\langle  2\dim_\C \vZ_{{\nu}_{w'}, {\nu}_{w'w_k}}^{w_k} \rangle\\
&\simeq\gH[\chi_1(\nu_{w'}), \ldots, \chi_m(\nu_{w'})]
\langle  2\dim_\C \vZ_{{\nu}_{w'}, {\nu}_{w'w_k}}^{w_k} \rangle.
\end{aligned}
\end{equation}
Note that $\mathrm{q}_k^{-1}( \vO^{w_k}_{w', w' w_k})$
is a dense open smooth subset of
$\vZ_{{\nu}_{w'}, {\nu}_{w'w_k}}^{w_k}$. Since $ \vZ^{\preceq
k}\setminus \vZ^{\preceq k-1}  = \mathrm{q}_k^{-1}(
\vO^{w_k}_\alpha)$, we get a long exact sequence (\cite[(2.6.10)]{CG97})
\begin{equation*}
\begin{aligned}
\xymatrix@C=2.7ex{
\cdots  \ar[r] & H^{\LG_{\alpha}^\Omega}_i( \vZ^{\preceq k-1} ) \ar[r] & H^{\LG_{\alpha}^\Omega}_i( \vZ^{\preceq k} ) \ar[r] & H^{\LG_{\alpha}^\Omega}_i( \mathrm{q}_k^{-1}( \vO^{w_k}_\alpha)  )
\ar[r] & H^{\LG_{\alpha}^\Omega}_{i+1}( \vZ^{\preceq k-1} )
\ar[r] & \cdots
}
\end{aligned}
\end{equation*}
for all $i\in \Z$.
Since $H^{\LG_{\alpha}^\Omega}_{2t+1}( \vZ^{\preceq k-1} )=
H^{\LG_{\alpha}^\Omega}_{2t+1}(\mathrm{q}_k^{-1}( \vO^{w_k}_\alpha))=0$ for any $t\in\Z$
 by $\eqref{Eq: hypothesis1}$ and $\eqref{Eq: Thom iso 1}$,
the following sequence is exact:
\begin{align} \label{Eq: exact seq for Z}
\xymatrix{
0  \ar[r] & H^{\LG_{\alpha}^\Omega}_*( \vZ^{\preceq k-1} ) \ar[r] & H^{\LG_{\alpha}^\Omega}_*(\vZ^{\preceq k} ) \ar[r] & H^{\LG_{\alpha}^\Omega}_*( \mathrm{q}_k^{-1}( \vO^{w_k}_\alpha)  )
\ar[r] &0
}.
\end{align}
By the isomorphisms $\eqref{Eq: H(Ze)}$ and  $\eqref{Eq: Thom
iso 1}$, we conclude that $H^{\LG_{\alpha}^\Omega}_*( \mathrm{q}_k^{-1}(
\vO^{w_k}_\alpha))$ is a free $\gR^{\le \id}_{\alpha}$-module
generated by the image of $[\vZ^{w_k}_\alpha]\in
H_{*}^{\LG_\alpha^\Omega}( \vZ^{\preceq k} )$. Hence the short
exact sequence $\eqref{Eq: exact seq for Z}$ splits and
\begin{align*}
H_{*}^{\LG_\alpha^\Omega}( \vZ^{\preceq k} ) =
H_{*}^{\LG_\alpha^\Omega}( \vZ^{\preceq k-1} ) \oplus
\gR^{\le \id}_{\alpha} \star [\vZ^{w_{k}}_\alpha], \qquad
H_{2t+1}^{\LG_\alpha^\Omega}( \vZ^{\preceq k} ) = 0.
\end{align*}
Hence the induction proceeds by \eqref{Eq: hypothesis1}
and we complete the proof of (ii).

We now proceed to prove the assertion (iii). For $k, k' = 1,2,3$, let $ \mathbf{p}_{k, k'}$
$:\tF_\alpha \times \tF_\alpha \times \tF_\alpha \rightarrow \tF_\alpha \times \tF_\alpha$ be the projection given by $\bl (x_1,F_1), (x_2,F_2), (x_3,F_3) \br\mapsto
\bl (x_{k},F_k), (x_{k'},F_{k'}) \br $. Combining the assertion (ii) with the inclusion
$$  \mathbf{p}_{12}^{-1}  (\vZ^{s_j}_\alpha) \cap  \mathbf{p}_{23}^{-1} (\vZ^{w}_\alpha)
\subset  \mathbf{p}_{12}^{-1} \mathrm{q}^{-1}_{\alpha} (\overline{\vO}^{s_j}_{\alpha}) \cap
\mathbf{p}_{23}^{-1} \mathrm{q}^{-1}_{\alpha} (\overline{\vO}^{w}_{\alpha})
\subset \mathbf{p}_{13}^{-1} \mathrm{q}^{-1}_{\alpha} (\overline{\vO}^{s_jw}_{\alpha})
\subset \mathbf{p}_{13}^{-1} (\vZ^{\le s_jw}_\alpha), $$
we have
\begin{align} \label{Eq: convolution of two elts}
[\vZ^{s_j}_\alpha] \star [\vZ^{w}_\alpha] = c \star [\vZ^{s_jw}_\alpha] + \sum_{w' < s_jw}c_{w'} \star [\vZ^{w'}_\alpha]
\end{align}
for $c, c_{w'} \in \gR^{\le \id}_{\alpha}$. To prove (3), it suffices to show $c=1$.

\medskip
For $k, k' = 1,2,3$, let $ \check{\mathbf{p}}_{k, k'} $
$:\F_\alpha \times \F_\alpha \times \F_\alpha \rightarrow \F_\alpha \times \F_\alpha$ be the natural projection given by
$\left( F_1, F_2, F_3 \right) \mapsto \left( F_{k},F_{k'} \right) $.
Then it is known that $\ell(s_jw) = \ell(w) + 1$ implies the following consequences:
\eq
&&\hs{5ex}\left\{
\parbox[c]{80ex}{
\be[(a)]
\item
$\check{\mathbf{p}}_{12}^{-1} (\overline{\vO}^{s_j}_{u,us_j})\cap
 \check{\mathbf{p}}_{23}^{-1} (\overline{\vO}_{us_j,us_jw}^w)\cap
\check{\mathbf{p}}_{13}^{-1} (\vO^{s_jw}_{u,us_jw})
=\check{\mathbf{p}}_{12}^{-1} (\vO^{s_j}_{u,us_j})
\cap\check{\mathbf{p}}_{23}^{-1} (\vO^{w}_{us_j,us_jw})$,\label{iten:Ows1}

\vs{1ex}
\item $ \check{\mathbf{p}}_{12}^{-1} (\vO^{s_j}_{u,us_j})$
 and $\check{\mathbf{p}}_{23}^{-1} (\vO^{w}_{us_j,us_jw})$
intersect transversally,

\vs{1ex}
\item
The projection $\check{\mathbf{p}}_{13}$ induces an isomorphism
$$ \check{\mathbf{p}}_{12}^{-1} (\vO^{s_j}_{u,us_j})
\cap\check{\mathbf{p}}_{23}^{-1} (\vO^{w}_{us_j,us_jw})
\isoto \vO^{s_jw}_{u,us_jw}.$$
\ee}\right.
\label{eq:Ows}
\eneq

Let us set
$$\overset{\circ}{\vZ}{^{u}_{u',u''}} = \mathrm{q}^{-1}_{\alpha}(\vO^{u}_{u',u''})\ \text{ and }\
\overset{\circ}{\vZ}{^{u}_{\alpha}} = \mathrm{q}^{-1}_{\alpha}(\vO^{u}_{\alpha})
\quad\text{for $u,u',u''\in \weyl$. }$$

Then, \eqref{eq:Ows} \eqref{iten:Ows1} implies
\begin{align} \label{Eq: intersection}
\mathbf{p}_{12}^{-1} & (\vZ^{s_j}_{\alpha})  \cap
\mathbf{p}_{23}^{-1} (\vZ^{w}_{\alpha}) \cap \mathbf{p}_{13}^{-1}
(\overset{\circ}{\vZ}{^{s_jw}_{\alpha}}) = \mathbf{p}_{12}^{-1}
(\overset{\circ}{\vZ}{^{s_j}_{\alpha}})  \cap \mathbf{p}_{23}^{-1}
(\overset{\circ}{\vZ}{^w_{\alpha}}).
\end{align}
Note that $ \mathbf{p}_{12}^{-1}   (\overset{\circ}{\vZ}{^{s_j}_{\alpha}})$ (resp.\
$ \mathbf{p}_{23}^{-1} (\overset{\circ}{\vZ}{^w_{\alpha}})$) is a smooth dense open subset of
$ \mathbf{p}_{12}^{-1} (\vZ^{s_j}_{\alpha})$ (resp.\ $ \mathbf{p}_{23}^{-1} (\vZ^{w}_{\alpha})$).
Hence, by $\eqref{Eq: exact seq for Z}$, (iii) is reduced to
\eq&&\left\{
\parbox{74ex}{
\be[(a)]
\item
$\mathbf{p}_{13}$ induces an isomorphism
$ \mathbf{p}_{12}^{-1} (\overset{\circ}{\vZ}{^{s_j}_{u,us_j}}) \cap
 \mathbf{p}_{23}^{-1}(\overset{\circ}{\vZ}{^w_{us_j,us_jw}})\isoto
\overset{\circ}{\vZ}{^{s_jw}_{u,us_jw}}$.\label{Sitem1}
\item
 $ \mathbf{p}_{12}^{-1}  (\overset{\circ}{\vZ}{^{s_j}_{u,us_j}})$ and $ \mathbf{p}_{23}^{-1} (\overset{\circ}{\vZ}{^{w}_{us_j,us_jw}})$
intersect transversally.\label{Zitem2}
\ee
}\right.\label{eq:SZ}
\eneq
Let us show first \eqref{eq:SZ} \eqref{Sitem1}. By using \eqref{eq:Ows},
it is enough to show that
if $F_1$ and $F_3$ are strictly $x$-stable,
then $F_2$ is strictly $x$ stable
for $x\in\E_\alpha$ and $(F_1, F_2, F_3)\in
\check{\mathbf{p}}_{12}^{-1} (\vO^{s_j}_{u,us_j})
\cap\check{\mathbf{p}}_{23}^{-1} (\vO^{w}_{us_j,us_jw})$.
Since $\check{\mathbf{p}}_{12}^{-1} (\vO^{s_j}_{u,us_j})
\cap\check{\mathbf{p}}_{23}^{-1} (\vO^{w}_{us_j,us_jw})\simeq\vO^{s_jw}_{u,us_jw}$
is a $G_\alpha$-orbit, we may assume that
$(F_1, F_2, F_3)=(F_u,F_{us_j},F_{us_jw})$.

Since $x$ is nilpotent, it is enough to show that $x(F_2)_k\subset (F_2)_k$
for all $k$.
We have $(F_2)_k=(F_1)_{k}$ for $k\not=j,\,j+1$.
It is therefore obvious that $x(F_2)_k\subset (F_2)_{k}$ for $k\not=j,\,j+1$.
We have
$(F_2)_j=(F_2)_{j-1}+\bF_{u(j+1)}$, which implies that
$x(F_2)_j\subset x(F_1)_{j-1}+x\bF_{u(j+1)}$.
Since $\bF_{u(j+1)}\subset (F_1)_{j+1}\cap(F_{us_jw})_{w^{-1}(j)}$, we have
$x\bF_{u(j+1)}\subset (F_1)_j\cap(F_{us_jw})_{w^{-1}(j)}$.
On the other hand, we have $(F_1)_j=(F_1)_{j-1}+\bF_{uj}$
and $\bF_{uj}\cap (F_{us_jw})_{w^{-1}(j)}=\bF_{us_jw(w^{-1}(j+1))}\cap (F_{uws_j})_{w^{-1}(j)}=0$
because $w^{-1}(j+1)>w^{-1}(j)$ by the assumption $s_jw>w$.
It implies that $(F_1)_j\cap(F_{us_jw})_{w^{-1}(j)}\subset (F_1)_{j-1}$.
Hence we have $x(F_2)_j\subset (F_2)_{j-1}$.

We have
$(F_2)_{j+1}=(F_2)_{j}+\bF_{u(j)}\subset(F_2)_{j}+(F_1)_j$.
Hence $x(F_2)_{j+1}\subset x(F_2)_{j}+x(F_1)_j
\subset (F_2)_{j-1}$.
Hence we complete the proof of \eqref{eq:SZ} \eqref{Sitem1}.

\smallskip
Let us show \eqref{eq:SZ} \eqref{Zitem2}.
Since $ \check{\mathbf{p}}_{12}^{-1} (\vO^{s_j}_{u,us_j})$
 and $\check{\mathbf{p}}_{23}^{-1} (\vO^{w}_{us_j,us_jw})$
intersect transversally, it is enough to show that
the fiber of
$ \mathbf{p}_{1,2}^{-1} (\overset{\circ}{\vZ}{^{s_j}_{u,us_j}}) \rightarrow \check{\mathbf{p}}_{1,2}^{-1} (\vO^{s_j}_{u,us_j}) $
and the fiber of
 $  \mathbf{p}_{2,3}^{-1} (\overset{\circ}{\vZ}{^{w}_{us_j,us_jw}}) \rightarrow
 \check{\mathbf{p}}_{2,3}^{-1} (\vO^{w}_{us_j,us_jw}) $
intersect transversally.

Let $\mathcal{T}^{1,2}$ (resp.\ $\mathcal{T}^{2,3}$) be the fiber of
$ \mathbf{p}_{1,2}^{-1} (\overset{\circ}{\vZ}{^{s_j}_{u,us_j}}) \rightarrow \check{\mathbf{p}}_{1,2}^{-1} (\vO^{s_j}_{u,us_j}) $
(resp.\ $  \mathbf{p}_{2,3}^{-1} (\overset{\circ}{\vZ}{^{w}_{us_j,us_jw}}) \rightarrow
 \check{\mathbf{p}}_{2,3}^{-1} (\vO^{w}_{us_j,us_jw}) $)
at $(F_u,F_{us_j},F_{us_jw})$.
We will show that
$\mathcal{T}^{1,2}$ and $\mathcal{T}^{2,3}$ intersect transversally.
Recall that $\ve_v$ is the fiber of $\tF_\alpha\to\F_\alpha$
at $F_v$ for $v\in \weyl$ (see \eqref{Eq: def of ew}).
Set $E_1=\ve_u$, $E_2=\ve_{us_j}$ and $E_3=\ve_{us_jw}$.
Then we have
\eqn
\mathcal{T}^{1,2}
&=&\{(v_1,v_2,v_3)\in E_1\oplus E_2\oplus E_3\mid
 v_1=v_2\},\\
\mathcal{T}^{2,3}
&=&\{(v_1,v_2,v_3)\in E_1\oplus E_2\oplus E_3\mid
 v_2=v_3\}.
\eneqn
In order to see that $\mathcal{T}^{1,2}$ and
$\mathcal{T}^{2,3}$ intersect transversally in $E_1\oplus E_2\oplus E_3$,
it is enough to show that
\eq &&E_2=(E_1\cap E_2)+(E_2\cap E_3).\label{eq:E3}
\eneq
Set $A(w)=\{(k,k')\mid
\text{$1\le k,k'\le m$ and $w^{-1}(k)>w^{-1}(k')$}\}$,
and $A_1=A(u)$, $A_2=A(us_j)$ and $A_3=A(us_jw)$.
Then we have
$E_s=\soplus_{(k,k')\in A_s}
\mathrm{Hom}(\bF_k,\bF_{k'})^{\Omega_{\nu^\circ_k,\nu^\circ_{k'}}}$ for $s=1,2,3$.
Hence we have reduced \eqref{eq:E3} to
$$A_2\subset A_1\cup A_3,$$
which immediately follows from
$A_2\setminus A_1=\{(u(j),u(j+1))\}$
and $(u(j),u(j+1))\in A_3$. Here the last statement is a consequence of
$w^{-1}(j+1)>w^{-1}(j)$.
This completes the proof of \eqref{eq:SZ} \eqref{Zitem2}.
\end{proof}

We now choose a set of generators of the convolution algebra
$\gR(\alpha)$. By Lemma \ref{Lem: inj of R} and $\eqref{Eq: H(Ze)}$,
we have an injective homomorphism
\begin{align} \label{Eq: injection - gen}
\xymatrix@C=4ex{
\gR^{\le \id}_{\alpha} \simeq \bigoplus_{\nu \in I^\alpha} \gH[ \varkappa_1(\nu), \ldots, \varkappa_m(\nu) ] \   \ar@{^(->}[r] &  \gR(\alpha).
}
\end{align}

For $\nu \in I^\alpha$, we define
\begin{equation} \label{Eq: def of e and x}
\begin{aligned}
e(\nu) &=[\vZ^{e}_{\nu,\nu}]= \text{the image of 1 in $\gH[ \varkappa_1(\nu), \ldots, \varkappa_m(\nu) ]$ under $\eqref{Eq: injection - gen}$,} \\
\end{aligned}
\end{equation}
For $\nu \in I^\alpha$ and $j=1,\ldots,m-1$, let $\tilde{\tau}_{j} =
[\vZ^{s_j}_\alpha] \in \gR(\alpha)$ and set
\begin{equation} \label{Eq: def of tau}
\begin{aligned}
\tau_j(\nu) =[\vZ^{s_j}_{s_j\nu,\nu}]=e(s_j\nu) \star \tilde{\tau}_{j} \star e(\nu)
=e(s_j\nu) \star \tilde{\tau}_{j} =\tilde{\tau}_{j} \star e(\nu).
\end{aligned}
\end{equation}

\begin{Lem} \label{Lem: degree of generators}
For $\nu=(\nu_1, \ldots, \nu_m) \in I^\alpha$, we have
\begin{align*}
e(\nu) \in H^{\LG_\alpha^\Omega}_{\mathrm{d}_\nu}(\vZ_{\nu,\nu}),
\quad \varkappa_i(\nu) \in H^{\LG_\alpha^\Omega}_{\mathrm{d}_\nu+2}(\vZ_{\nu,\nu}),  \quad
\tau_j(\nu) \in H^{\LG_\alpha^\Omega}_{\mathrm{d}_{s_j\nu} + \mathrm{t}_{\nu,j}}(\vZ_{s_j\nu,\nu}),
\end{align*}
where $\mathrm{d}_\nu = -2\dim_\C \tF_\nu$ and
 $\mathrm{t}_{\nu,j} = \left\{
             \begin{array}{ll}
               2(\ell_{\nu_j}-1) & \hbox{ if } \nu_j = \nu_{j+1},  \\
               2h_{\nu_{j}, \nu_{j+1}} & \hbox{ if } \nu_j \ne \nu_{j+1}.
             \end{array}
           \right.
$
\end{Lem}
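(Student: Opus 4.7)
The three degree statements follow from a dimension count: each class on the right is a fundamental class or (via Poincaré duality on a smooth stratum) a cohomology class, so its Borel-Moore degree is determined by the complex dimension of the underlying variety. Under the convention $H_k^G(X) = H^k_G(X, \varpi_X)$, for a smooth $G$-variety $Y$ of complex dimension $d$ one has $\varpi_Y \simeq \C_Y[2d]$, so $H_k^G(Y) \simeq H^{k+2d}_G(Y)$; in particular the fundamental class $[Y]$ lies in $H_{-2d}^G(Y)$ and a cohomology class of degree $p$ corresponds under Poincaré duality to a Borel-Moore class of degree $p-2d$. Pushforward under closed embeddings preserves Borel-Moore degree.

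For $e(\nu) = [\vZ^e_{\nu,\nu}]$: since $\vO^e_\alpha$ is the diagonal of $\F_\alpha$, the preimage $\mathrm{q}^{-1}_\alpha(\vO^e_{\nu,\nu})$ is the diagonal copy of $\tF_\nu$ in $\vZ_{\nu,\nu}$, which is already closed, so $\vZ^e_{\nu,\nu} \simeq \tF_\nu$ is smooth of complex dimension $\dim_\C\tF_\nu$ and its fundamental class lies in $H_{\mathrm{d}_\nu}^{\LG_\alpha^\Omega}(\vZ_{\nu,\nu})$. For $\varkappa_i(\nu)$: by construction it is the image of $\chi_i(\nu) \in H^2_{\LG_\alpha^\Omega}(\tF_\nu)$ under the isomorphism \eqref{Eq: isom of cohomology}, so it sits in Borel-Moore degree $2 - 2\dim_\C\tF_\nu = \mathrm{d}_\nu + 2$.

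For $\tau_j(\nu) = [\vZ^{s_j}_{s_j\nu,\nu}]$: by Lemma \ref{lem:Zsj}(i), $\vZ^{s_j}_{s_j\nu,\nu}$ is irreducible, and by Lemma \ref{lem:Zsj}(iii) it is smooth (as a connected component of $\vZ^{s_j}_\alpha$), so its fundamental class lies in $H^{\LG_\alpha^\Omega}_{-2\dim_\C\vZ^{s_j}_{s_j\nu,\nu}}$. It therefore suffices to prove $\dim_\C\vZ^{s_j}_{s_j\nu,\nu} = \dim_\C\tF_{s_j\nu} - \tfrac{1}{2}\mathrm{t}_{\nu,j}$. This dimension equals that of the open dense stratum $\mathrm{q}^{-1}_\alpha(\vO^{s_j}_{s_j\nu,\nu})$, which by Lemma \ref{Lem: properties of O}(vi) is a $\LG_\alpha^\Omega$-equivariant vector bundle over the $\LG_\alpha$-orbit $\vO^{s_j}_{s_j\nu,\nu}$; the base has dimension $\dim_\C\F_{s_j\nu} + \delta_{\nu,s_j\nu}$ by Lemma \ref{Lem: properties of O}(iii), (v). At a basepoint $(F_u, F_{us_j})$ with $\nu_u = s_j\nu$, the fiber is $\ve_u \cap \ve_{us_j}$; comparing the two stability conditions via the decomposition \eqref{eq:cw}, the only index pair giving a component of $\ve_u$ forbidden by the extra condition for $F_{us_j}$ is $(l,l') = (u(j{+}1), u(j))$, contributing codimension $|\Omega_{\znu_{u(j+1)}, \znu_{u(j)}}| = |\Omega_{\nu_j, \nu_{j+1}}|$. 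Summing yields
\[
\dim_\C\vZ^{s_j}_{s_j\nu,\nu} = \dim_\C\tF_{s_j\nu} + \delta_{\nu,s_j\nu} - |\Omega_{\nu_j, \nu_{j+1}}|,
\]
and a case split ($\delta=1$, $|\Omega|=\ell_{\nu_j}$ when $\nu_j = \nu_{j+1}$; $\delta=0$, $|\Omega|=h_{\nu_j,\nu_{j+1}}$ otherwise) matches $-\tfrac{1}{2}\mathrm{t}_{\nu,j}$ in both cases. The main technical step is the fiber identification $\dim_\C(\ve_u \cap \ve_{us_j}) = \dim_\C\ve_u - |\Omega_{\nu_j, \nu_{j+1}}|$, which requires tracking precisely which off-diagonal component is killed by the additional stability condition at position $j$.
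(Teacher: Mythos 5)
Your proof is correct and follows essentially the same route as the paper: both compute the degree of $\tau_j(\nu)$ by realizing (a dense open subset of) $\vZ^{s_j}_{s_j\nu,\nu}$ as a vector bundle over $\vO^{s_j}_{s_j\nu,\nu}$ with fiber $\ve_u \cap \ve_{us_j}$, and then read off the fiber codimension from the explicit decomposition \eqref{eq:cw}, finding that the unique missing summand of $\ve_u$ is the one indexed by the slot $(k,k')=(j{+}1,j)$, contributing $\#\Omega_{\nu_j,\nu_{j+1}}$, and use Lemma~\ref{Lem: properties of O}\eqref{Oitem3} for $\dim_\C \vO^{s_j}_{s_j\nu,\nu}$. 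The only cosmetic difference is that you invoke smoothness of $\vZ^{s_j}_\alpha$ from Lemma~\ref{lem:Zsj}(iii), whereas the paper just works with the dense open stratum $\overset{\circ}{\vZ}=\mathrm{q}^{-1}_\alpha(\vO^{s_j}_{s_j\nu,\nu})$, which is all that is needed for the dimension count.
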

\begin{proof} It follows directly from $\eqref{Eq: def of e and x}$ that $e(\nu) \in H^{\LG_\alpha^\Omega}_{\mathrm{d}_\nu}(\vZ_{\nu,\nu})$ and  $\varkappa_i(\nu) \in H^{\LG_\alpha^\Omega}_{\mathrm{d}_\nu+2}(\vZ_{\nu,\nu})$.
We focus on the assertion $\tau_j(\nu) \in
H^{\LG_\alpha^\Omega}_{\mathrm{d}_{s_j\nu} +
\mathrm{t}_{\nu,j}}(\vZ_{s_j\nu,\nu})$.
Set
$$\overset{\circ}{\vZ} \seteq \mathrm{q}_{s_j\nu, \nu}^{-1}( \vO^{s_j}_{s_j\nu, \nu} ).$$
Since $\overset{\circ}{\vZ}$ is a dense open subset of
$\vZ^{ s_j}_{s_j\nu, \nu}$, it suffices to show that
$$ \dim_\C \overset{\circ}{\vZ}
=\begin{cases}
\dim_\C \tF_\nu - \ell_{\nu_j} + 1 & \text{if $\nu_j = \nu_{j+1}$,} \\
\dim_\C \tF_{s_j\nu} - h_{\nu_{j}, \nu_{j+1}} & \text{if $\nu_j \ne \nu_{j+1}$.}
\end{cases}
 $$
By $\eqref{eq:cw}$, we have
\begin{align} \label{Eq: dim of tF}
\dim_\C \tF_{s_j\nu} = \dim_\C \F_{s_j\nu} +
\sum_{k > k'} \#\Omega_{(s_j\nu)_k,(s_j\nu)_{k'}}.
\end{align}
We now consider the following maps
\begin{align} \label{Eq: bundle map Y}
\overset{\circ}{\vZ} \buildrel \mathrm{q}_{s_j\nu,\nu}\over \longrightarrow \vO^{s_j}_{s_j\nu, \nu} \buildrel \mathrm{pr}_{1}\over \longrightarrow \F_{s_j\nu} ,
\end{align}
where $\mathrm{pr}_1$ is the first projection.
The fiber of the vector bundle
$\overset{\circ}{\vZ} \longrightarrow \vO^{s_j}_{w s_j, w}$
is isomorphic to $\ve_{ws_j}\cap\ve_w$, and  $\eqref{eq:cw}$
implies that its dimension
is equal to
\begin{align} \label{Eq: dim of fiber of Y}
\sum_{k > k',\ (k,k') \ne (j+1,j)}\#\Omega_{(s_j\nu)_{k},(s_j\nu)_{k'}}.
\end{align}
Hence we have
\eqn
&&\dim_\C\overset{\circ}{\vZ}=\dim_\C\vO^{s_j}_{s_j\nu, \nu}+
\sum_{k> k',\ (k,k') \ne (j+1,j)}\# \Omega_{(s_j\nu)_{k},(s_j\nu)_{k'}}
\eneqn
Together with \eqref{Eq: dim of tF} we obtain
\eqn
\dim_\C\overset{\circ}{\vZ}-\dim_\C \tF_{s_j\nu}=\dim_\C\vO^{s_j}_{s_j\nu, \nu}
-\dim_\C \F_{s_j\nu}-\#\Omega_{(s_j\nu)_{j+1},(s_j\nu)_{j}}.
\eneqn
Hence our
claim follows from the fact
$$\dim_\C\vO^{s_j}_{s_j\nu, \nu}=\dim_\C \F_{s_j\nu}+\delta_{s_j\nu,\nu}$$
in Lemma~\ref{Lem: properties of O} \eqref{Oitem3}.
\end{proof}

\subsection{Localization}
Let us recall the localization theorem in equivariant cohomologies.
We refer the reader to
\cite{Brion98,Fulton07}.

Let $\gP = H_{\LT^\Omega}^*({\rm pt})$ and recall the ring $\gS =
H^*_{\LG_\alpha^\Omega}( \rm pt)$ given in $\eqref{Eq: def of S}$.
We have
$$\gP \simeq \gH \otimes H_{\LT }^*({\rm pt}) \simeq \gH[\chi_1, \ldots, \chi_m] $$
with $\deg \chi_i=2$ for $i=1,\ldots,m$.
The action of $\weyl$ on $\LT$ induces the action of $\weyl$ on $\gP$; i.e.,
$$ w \bl f(\chi_1, \ldots, \chi_m ) \br= f(\chi_{w(1)}, \ldots, \chi_{w(m)} ) $$
for $w\in \weyl$, $ f(\chi_1, \ldots, \chi_m ) \in \gH[\chi_1,
\ldots, \chi_m] $. Then the group morphism $\LT
\rightarrow \LG_\alpha$ induces the homomorphism
$\gS =H^*_{\LG_\alpha^\Omega}(\mathrm{pt}) \rightarrow
H^*_{\LT^\Omega}(\mathrm{pt}) $ and $\gS$ can be identified with
$$ \gS \simeq \gH[\chi_1, \ldots, \chi_m]^{\weyl_\alpha} \hookrightarrow \gP,$$
where $\gH[\chi_1, \ldots, \chi_m]^{\weyl_\alpha}$ is the set of
$\weyl_\alpha$-invariant polynomials in $\gH[\chi_1, \ldots,
\chi_m]$.
Let $\gK$ be the
fraction field of $\gP$:
$$\gK = \C(\chi_1, \ldots, \chi_m, \hbar_a\ (a \in \Omega))$$
and consider $\gP$ as a subring of $\gK$.

Let $X$ be a  quasi-projective $\LT^\Omega$-variety.
Then the inclusion $\iota\cl X^{\LT^\Omega}\hookrightarrow X$
induces isomorphisms (localization theorem):
\begin{align}\label{eq:lovcalization_hom}
\gK \otimes_{\gP}  H_*^{\LT^\Omega}(X^{\LT^\Omega}) &\isoto[\iota_*]
\gK \otimes_{\gP} H_*^{\LT^\Omega}(X)\\
\intertext{and}
\gK \otimes_{\gP}  H^*_{\LT^\Omega}(X) &\isoto[\iota^*]
\gK \otimes_{\gP} H^*_{\LT^\Omega}(X^{\LT^\Omega}).
\label{eq:lovcalization_coh}
\end{align}

Let $\Lt^\Omega \seteq \Lt \oplus \Lh^\Omega $ be the Lie algebra of the
group $\LT^\Omega = \LT \times \LH^\Omega$. For a finite-dimensional
weight module $M$ over $\Lt^\Omega$, set 
$$ \mathfrak{d}(M) = \prod_{\mu} \mu ^{ \dim_\C M_\mu} \in \gP, $$
where $M = \bigoplus_\mu M_\mu $ is the weight space
decomposition of $M$.

Assume that
\eq\hs{-1ex}
\text{the fixed point set $X^{\LT^\Omega}$ is finite
and consists of smooth points of $X$.}\label{cond:finite}
\eneq
For a smooth $\LT^\Omega$-fixed point $p$,
the {\em equivariant Euler class} $\eu( X, p )$ is by definition
the image of $1\in\gP$ by the composition
$\gP\simeq H^*_{\LT^\Omega}(p)\longrightarrow H^*_{\LT^\Omega}(X)\lan2\dim_\C X\ran
\longrightarrow H^*_{\LT^\Omega}(p)\lan2\dim_\C X\ran\simeq\gP\lan2\dim_\C X\ran$
where the first arrow is the Gysin map.
Then we have (see e.g.\ \cite{Brion98})
\eq&&\eu( X, p )=\mathfrak{d}(T_{p} X).\eneq

The localization theorem (see e.g.\ \cite{Brion98}) says
\eq &&[X]=\sum_{p\in X^{\LT^\Omega}}\eu( X, p )^{-1}[p]
\quad\text{in $\gK \otimes_{\gP} H_*^{\LT^\Omega}(X)$.}
\label{eq:localize}\eneq
 Note that $\eu( X, p )$ never vanishes.
Note also that, under the condition \eqref{cond:finite},
the Gysin morphism induces an isomorphism:
\eq
&&\gK \otimes_{\gP}  H^*_{\LT^\Omega}(X^{\LT^\Omega}) \isoto
\gK \otimes_{\gP} H^*_{\LT^\Omega}(X)\lan2\dim_\C X\ran.
\label{eq:Gysin}\eneq

\bigskip
We now apply the localization theorem
to $\LT^\Omega$-varieties $\tF_\alpha$ and $\vZ_\alpha$.
By \cite[Proposition 1.2.1]{Brion04} and Lemma \ref{Lem: properties of O} (1),
the sets $\tF_\alpha^{\LT^\Omega}$ and $\vZ_\alpha^{\LT^\Omega}$ of $\LT^\Omega$-fixed points are given by
\begin{align*}
\tF_\alpha^{\LT^\Omega} &= \{(0,F_w ) \mid w \in \weyl \} \subset
\tF_\alpha, \quad \tF_\nu^{\LT^\Omega} = \tF_\alpha^{\LT^\Omega}
\cap \tF_\nu,   \\
\vZ_\alpha^{\LT^\Omega}  &= \{(0,F_{w,w'} ) \mid w, w' \in \weyl \}
\subset \vZ_\alpha,  \quad \vZ_{\nu, \nu'}^{\LT^\Omega} =
\vZ_\alpha^{\LT^\Omega} \cap \vZ_{\nu, \nu'}
\end{align*}
for $\nu, \nu' \in I^\alpha$.
Then we have $\gK$-module isomorphisms by
\eqref{eq:lovcalization_hom} and \eqref{eq:Gysin}
\begin{equation} \label{Eq: localization}
\begin{aligned}
\gK \otimes_{\gP} H^{*}_{\LT^\Omega}(\tF_\nu^{\LT^\Omega})
 &\isoto \gK \otimes_{\gP} H^*_{\LT^\Omega}(\tF_\nu)\lan2\dim_\C\tF_\nu\ran  , \\
 \gK \otimes_{\gP} H_{*}^{\LT^\Omega}(\vZ_{\nu, \nu'}^{\LT^\Omega})
&\isoto \gK \otimes_{\gP} H_*^{\LT^\Omega}(\vZ_{\nu, \nu'}),
\end{aligned}
\end{equation}
which yields
\begin{align} \label{Eq: localization isom}
\gK \otimes_{\gP} H^*_{\LT^\Omega}(\tF_\nu) \simeq \bigoplus_{w \in
\weyl} \gK \ \zeta_w, \quad \gK \otimes_{\gP}H_*^{\LT^\Omega}(\vZ_{\nu, \nu'}) \simeq \bigoplus_{w,w' \in \weyl}
\gK \ \zeta_{w,w'}.
\end{align}
Here, $\zeta_w$ (resp.\ $\zeta_{w,w'}$) is the element in
$H^{*}_{\LT^\Omega}(\tF_\nu)$ (resp.\
$H_{*}^{\LT^\Omega}(\vZ_{\nu,\nu'})$) which is the image of $(0,F_w
) \in H^{*}_{\LT^\Omega}(\tF_\nu^{\LT^\Omega})$ (resp.\ $(0,F_{w,w'}
) \in H_{*}^{\LT^\Omega}(\vZ_{\nu, \nu'}^{\LT^\Omega})$) under the
isomorphisms $\eqref{Eq: localization}$. We have the following
injective $\gP$-module homomorphisms \cite[(2.3)]{VV11}
\begin{equation} \label{Eq: def of Psi and Phi}
\begin{aligned}
\Psi_{\nu}& : H^*_{\LG_\alpha^\Omega}(\tF_{\nu} )\hookrightarrow \gK \otimes_{\gP}
H^*_{\LT^\Omega}(\tF_{\nu}), \\
\Phi_{\nu,\nu'}& : H_*^{\LG_\alpha^\Omega}(\vZ_{\nu,\nu'} ) \hookrightarrow \gK \otimes_{\gP} H_*^{\LT^\Omega}(\vZ_{\nu, \nu'})
\end{aligned}
\end{equation}
for $\nu, \nu' \in I^\alpha$. Let $\Psi_{\alpha} = \sum_{\nu \in
I^\alpha}\Psi_{\nu}$ and $\Phi_{\alpha} = \sum_{\nu, \nu' \in
I^\alpha}\Phi_{\nu,\nu'}$. Then we obtain the following commutative
diagram:
  \begin{equation} \label{Eq: localization diagram1}
  \begin{aligned}
\xymatrix{
  H_*^{\LG_\alpha^\Omega}(\vZ_\alpha) \times H^*_{\LG_\alpha^\Omega}(\tF_\alpha) \ar@{^(->}[d] \ar[r]^{\qquad \ \ \star} &
H^*_{\LG_\alpha^\Omega}(\tF_\alpha) \ar@{^(->}[d] \\
  \gK \otimes_{\gP} H_*^{\LT^\Omega}(\vZ_\alpha) \times  \gK \otimes_{\gP} H^*_{\LT^\Omega}(\tF_\alpha)  \ar[r]^{\qquad \qquad  \star} &   \gK \otimes_{\gP}
 H^*_{\LT^\Omega}(\tF_\alpha).
}
\end{aligned}
\end{equation}
Here, $\star$ is the convolution product.

Set
\begin{alignat*}{2}
\Lambda_w &= \eu( \tF_\alpha, (0,F_w) )&\quad&\text{for $w\in \weyl$ and,}\\
\Lambda^{s_j}_{w,w'}&=\eu(\vZ^{s_j}_{w,w'}, (0,F_{w,w'}) )&\quad&
\parbox[t]{50ex}{for $j=1,\ldots,m-1$
and\\$w,w'\in \weyl$ such that $w'=w, ws_j$ and $\nu_w=\nu_{w's_j}$.
}
\end{alignat*}

Then $\Lambda_w$ and $\Lambda^{s_j}_{w,w'}$
are elements of $\gP$ of
degree $2\dim_\C \tF_{\nu_w}$ and $2\dim \vZ^{s_j}_{\nu_w,\nu_{w'}}$,
respectively.
 Note that $\tF_\alpha$ and $\vZ^{s_j}_{w,w'}$ are
smooth varieties with finitely many $\LT^\Omega$-fixed points
(see Lemma~\ref{lem:Zsj}).

 By Lemma \ref{Lem: properties of O} and \eqref{eq:localize},
we obtain the following lemma (see also \cite[Lemma 2.17, Lemma 2.19]{VV11}).

\begin{Lem} \label{Lem: convolution} \
\bnum
\item
For $ \nu \in I^\alpha$ and $f \in \gH[x_1, \ldots,x_m]$, we have
$$ \Psi_\nu\bl f(\chi_1(\nu), \ldots, \chi_m(\nu))\br = \sum_{w \in \weyl(\nu)} f(\chi_{w(1)}, \ldots, \chi_{w(m)}) \Lambda_w^{-1} \zeta_w, $$
where $\weyl(\nu)\seteq\{ w \in \weyl \mid F_w\in\F_\nu\} =
 \{ w \in \weyl \mid \nu_w = \nu  \}$.
\item For $ \nu \in I^\alpha$ and $f \in \gH[x_1, \ldots,x_m]$,
$$\Phi_{\nu,\nu}\bl f(\varkappa_1(\nu),\ldots,\varkappa_m(\nu))\br
=\sum_{w \in \weyl(\nu)} f(\chi_{w(1)}, \ldots, \chi_{w(m)})
 \Lambda_w^{-1} \zeta_{w,w}, $$
\item
For $j=1,\ldots, m-1$ and $\nu \in I^\alpha$,
$$  \Phi_{\alpha} (\tau_j(\nu)) =
\begin{cases}
\sum\limits_{w\in \weyl(\nu)} \Lambda^{s_j}_{ws_j,w}{}^{-1} \zeta_{ws_j,w}
& \text{if $s_j\nu \ne \nu$,} \\
\sum\limits_{w\in \weyl(\nu)}\bl \Lambda^{s_j}_{w,w}{}^{-1} \zeta_{w,w}
+ \Lambda^{s_j}_{ws_j,w}{}^{-1} \zeta_{ws_j,w} \br
& \text{if $s_j\nu = \nu$.}
\end{cases}
$$
\item For $w,w',w'' \in \weyl$, we have
$$ \zeta_{w,w'} \star \zeta_{w''} = \delta_{w',w''}\Lambda_{w''} \zeta_{w}.$$
\end{enumerate}
\end{Lem}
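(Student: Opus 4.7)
My plan is to deduce all four parts from the equivariant localization theorem \eqref{eq:localize}--\eqref{eq:Gysin}, together with the enumeration of $\LT^\Omega$-fixed points supplied by Lemma~\ref{Lem: properties of O} and the restriction formula \eqref{eq:locchi}. I would treat (iv) first, since it is the cleanest convolution computation, then obtain (i) and (ii) from the cohomological form of the localization formula, and finally apply \eqref{eq:localize} to a smooth subvariety to obtain (iii).

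For (iv), I write $\zeta_{w,w'}=\iota_*1$ and $\zeta_{w''}=\iota'_*1$, where $\iota$ and $\iota'$ are the inclusions $(0,F_{w,w'})\hookrightarrow \vZ_{\nu_w,\nu_{w'}}$ and $(0,F_{w''})\hookrightarrow \tF_{\nu_{w''}}$. Spelling out the convolution action as displayed after \eqref{eq:convolution}---pullback along the second projection $\vZ_{\nu_w,\nu_{w'}}\to\tF_{\nu_{w'}}$, cap product, pushforward along the first projection---the projection formula combined with the self-intersection identity $\zeta_{w''}|_{(0,F_v)}=\delta_{v,w''}\Lambda_{w''}$ produces
$$\zeta_{w,w'}\star\zeta_{w''}=\delta_{w',w''}\,\Lambda_{w''}\,\zeta_w,$$
since the first projection sends $(0,F_{w,w'})$ to $(0,F_w)$.

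For (i) and (ii), I use the cohomological localization formula: for a smooth $\LT^\Omega$-variety $X$ with finitely many (necessarily smooth) fixed points, any $c\in H^*_{\LT^\Omega}(X)$ satisfies
$$c=\sum_{p\in X^{\LT^\Omega}}(c|_p)\,\eu(X,p)^{-1}\,\zeta_p\qquad\text{in }\gK\otimes_{\gP}H^*_{\LT^\Omega}(X),$$
an immediate consequence of \eqref{eq:Gysin} and the self-intersection formula. Applied to $X=\tF_\nu$ and $c=f(\chi_1(\nu),\dots,\chi_m(\nu))$, the restriction rule \eqref{eq:locchi} gives $c|_{(0,F_w)}=f(\chi_{w(1)},\dots,\chi_{w(m)})$, proving (i). Part (ii) follows by the same argument applied to $X=\vZ^{\le\id}_{\nu,\nu}\simeq \tF_\nu$ under the identification $\varkappa_k(\nu)\leftrightarrow\chi_k(\nu)$ of \eqref{Eq: isom of cohomology}; since the class is supported on $\vZ^{\le\id}_{\nu,\nu}$, only the diagonal fixed points contribute, and pushforward through the closed embedding $\vZ^{\le\id}_{\nu,\nu}\hookrightarrow\vZ_{\nu,\nu}$ converts the point classes into $\zeta_{w,w}$.

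For (iii), I would apply \eqref{eq:localize} to the smooth variety $\vZ^{s_j}_{s_j\nu,\nu}$ (smooth by Lemma~\ref{lem:Zsj} (iii)) and read off its $\LT^\Omega$-fixed points from Lemma~\ref{Lem: properties of O}. When $s_j\nu\neq\nu$, part \eqref{Oitem5} gives $\overline{\vO}^{s_j}_{s_j\nu,\nu}=\vO^{s_j}_{s_j\nu,\nu}$, and the fixed points are precisely the $(0,F_{ws_j,w})$ with $w\in \weyl(\nu)$. When $s_j\nu=\nu$, part \eqref{Oitem4} gives $\overline{\vO}^{s_j}_{\nu,\nu}=\vO^{s_j}_{\nu,\nu}\cup\vO^{\id}_{\nu,\nu}$, so the fixed-point set splits into two families $(0,F_{w,w})$ and $(0,F_{ws_j,w})$ indexed by $w\in \weyl(\nu)$. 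Pushing forward through the closed embedding $\vZ^{s_j}_{s_j\nu,\nu}\hookrightarrow\vZ_{s_j\nu,\nu}$ sends these point classes to $\zeta_{w,w}$ and $\zeta_{ws_j,w}$ with Euler denominators $\Lambda^{s_j}_{w,w}$ and $\Lambda^{s_j}_{ws_j,w}$ by definition, producing the stated formulas. The main subtlety I anticipate is the case $s_j\nu=\nu$, where Lemma~\ref{Lem: properties of O} \eqref{Oitem4} is essential to ensure that the closure picks up the diagonal stratum, so that both families of fixed points genuinely contribute to the localization sum.
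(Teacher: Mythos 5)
Your proposal is correct and follows essentially the same localization argument as the paper: parts (i), (ii) via the cohomological localization/restriction formula, (iii) by applying \eqref{eq:localize} to the smooth variety $\vZ^{s_j}_{s_j\nu,\nu}$, and (iv) by computing the convolution at fixed points. The only cosmetic difference is in (iv), where you argue directly via the projection formula and the self-intersection identity, while the paper instead convolves $\zeta_{w,w'}$ with $[\tF_\alpha]$ and expands $[\tF_\alpha]$ by \eqref{eq:localize}; the two computations are equivalent.
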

\begin{proof}
(i)\ For any $w\in W(\nu)$, let $R_w$ be the map
$\gK \otimes_{\gP} H^{*}_{\LT^\Omega}(\tF_\nu)
\to \gK \otimes_{\gP} H^*_{\LT^\Omega}((0,F_w))\simeq\gK$
induced by the inclusion $(0,F_w)\hookrightarrow \tF_w$.
Then the map
$$\gK \otimes_{\gP} H^{*}_{\LT^\Omega}(\tF_\nu)\To[{\oplus R_w}]
\soplus_{w\in W(\nu)}\gK \otimes_{\gP} H^*_{\LT^\Omega}((0,F_w))
\simeq  H^*_{\LT^\Omega}(\tF_\nu^{\LT^\Omega})$$
is an isomorphism (see \eqref{eq:lovcalization_coh}).
Then (i) follows from
$$R_{w}\Bigl(\Psi_\nu\bl f(\chi_1(\nu), \ldots, \chi_m(\nu))\br\Bigr)
=f(\chi_{w(1)}, \ldots, \chi_{w(m)})$$ (see \eqref{eq:locchi}) and
$R_w(\zeta_{w'})=\delta_{w,w'}\Lambda_w$.

\smallskip
\noi
(ii) is similarly proved.

\smallskip\noi
(iii) immediately follows from \eqref{eq:localize} with $X=\vZ_{s_j\nu,\nu}^{s_j}$.

\smallskip\noi
(iv) It is obvious that $\zeta_{w,w'} \star \zeta_{w''} = 0$
as soon as $w'\not=w''$.
Hence we have
$$\zeta_{w}= \zeta_{w,w'} \star[\tF_\alpha]
=\zeta_{w,w'}\star\bl\sum_{w''\in W}\Lambda_{w''}^{-1}\zeta_{w''}\br
=\zeta_{w,w'}\star\bl\Lambda_{w'}^{-1}\zeta_{w'}\br.$$
Here the second equality follows from \eqref{eq:localize}.
\end{proof}

Recall that $\ve_w$ is the fiber of the vector bundle
$\tF_\alpha\to\F_\alpha$ at $F_w$
(see \eqref{Eq: def of ew}).
Hence we have
\begin{equation}\label{Eq: Lambda 1}
\Lambda_w = \eu( \tF_{\nu_w}, (0, F_{w} ))
 = \eu( \F_{\nu_w}, F_{w} ) \ \mathfrak{d}(\ve_{w}).
\end{equation}

Let $w,w'\in \weyl$ such that $F_{w', w}\in\overline{\vO}^{s_j}_{\alpha}$,
\i.e., $w'=w,\,ws_j$ and $\nu_w=\nu_{w's_j}$
(see Lemma~\ref{Lem: properties of O}).
Then $\vZ_{\alpha}^{s_j}\to\overline{\vO}^{s_j}_{\alpha}$
is a vector bundle and its fiber at $F_{w, w'}$ is $\ve_{w}\cap\ve_{ws_j}$
(see Lemma~\ref{lem:Zsj}).
Hence we have
\begin{equation} \label{Eq: Lambda 2}
\begin{aligned}
{\Lambda_{w,w'}^{s_j}}&= \eu(\vZ_{\alpha}^{s_j}, (0, F_{w, w'}) ) \\
&=\eu(\overline{\vO}^{s_j}_{\alpha},  F_{w, w'} )\
\mathfrak{d}(\ve_{w} \cap \ve_{ws_j}).
\end{aligned}
\end{equation}

In the following lemma, we compute the quotients
$\Lambda_{w,w}^{s_j}{}^{-1}\ \Lambda_w$ and $\Lambda_{w,ws_j}^{s_j}{}^{-1}\
\Lambda_w$ of the equivariant Euler classes
in order to describe explicitly the actions of $e(\nu)$, $\varkappa_k(\nu)$
and $\tau_t(\nu)$ on $\gPol(\alpha)$. The polynomials
$\mathcal{P}_i$ and $\mathcal{Q}_{i,j}$ given in $\eqref{Eq: P and
Q}$ arise naturally in the course of computation.

\begin{Lem} \label{Lem: computation for Lambdas}
Let $w\in \weyl$ and $j=1,\ldots,m-1$. Set $\nu =
\nu_w$ and write $\nu = (\nu_1, \nu_2, \ldots, \nu_m)$.
\bnum
\item If $ \nu \ne s_j\nu$, then
$$
\Lambda_{w,ws_j}^{s_j}{}^{-1}\Lambda_w = \prod_{a \in \Omega_{\nu_{j+1}, \nu_{j}}}(\chi_{w(j)} - \chi_{w(j+1)} + \hbar_a) ,
$$
\item If $ \nu = s_j\nu$, then
\begin{align*}
\Lambda_{w,w'}^{s_j}{}^{-1}\Lambda_w = (-1)^{\delta_{w,w'} }  \frac{\prod_{a \in \Omega_{\nu_j, \nu_j}}(\chi_{w(j)} - \chi_{w(j+1)} + \hbar_a)}{\chi_{w(j)} - \chi_{w(j+1)}},
\end{align*}
where $w' = w$ or $w' =  ws_j$.
\end{enumerate}
\end{Lem}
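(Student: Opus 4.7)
The plan is to combine the factorizations $\Lambda_w = \eu(\F_{\nu_w}, F_w)\cdot\mathfrak{d}(\ve_w)$ and $\Lambda^{s_j}_{w,w'} = \eu(\overline{\vO}^{s_j}_\alpha, F_{w,w'})\cdot\mathfrak{d}(\ve_w\cap \ve_{ws_j})$ of \eqref{Eq: Lambda 1} and \eqref{Eq: Lambda 2} with multiplicativity of $\mathfrak{d}$ across short exact sequences of $\LT^\Omega$-weight modules. This reduces the computation to
\begin{align*}
\Lambda^{s_j}_{w,w'}{}^{-1}\Lambda_w
= \frac{\eu(\F_{\nu_w}, F_w)}{\eu(\overline{\vO}^{s_j}_\alpha, F_{w,w'})}\cdot\mathfrak{d}\bigl(\ve_w/(\ve_w\cap \ve_{ws_j})\bigr),
\end{align*}
so the remaining work is to compute the two factors on the right separately.

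For the $\mathfrak{d}$-factor, I would use the description of $\ve_w$ in \eqref{eq:cw} together with the observation that the flags $F_w$ and $F_{ws_j}$ agree except in position $j$. A direct inspection then shows that $\ve_w\cap\ve_{ws_j}$ is cut out from $\ve_w$ by the single additional constraint $x(\bF_{w(j+1)})\subset (F_w)_{j-1}$, strengthening the $\ve_w$-condition $x(\bF_{w(j+1)})\subset (F_w)_j=(F_w)_{j-1}\oplus\bF_{w(j)}$. As $\LT^\Omega$-modules this gives
\begin{align*}
\ve_w/(\ve_w\cap \ve_{ws_j})\simeq \soplus_{a\in \Omega_{\nu_{j+1},\nu_j}} \Hom(\bF_{w(j+1)},\bF_{w(j)}),
\end{align*}
where $\LT$ acts by $\chi_{w(j)}-\chi_{w(j+1)}$ and the $a$-th factor of $\LH^\Omega$ by $\hbar_a$, so
\begin{align*}
\mathfrak{d}\bigl(\ve_w/(\ve_w\cap \ve_{ws_j})\bigr)
=\prod_{a\in\Omega_{\nu_{j+1},\nu_j}}(\chi_{w(j)}-\chi_{w(j+1)}+\hbar_a).
\end{align*}
This already accounts for the product that appears in both (i) and (ii).

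For the Euler-class factor I would split according to whether $\nu=s_j\nu$. In case (i), Lemma~\ref{Lem: properties of O} \eqref{Oitem5} says that $\mathrm{pr}_1\cl \overline{\vO}^{s_j}_{\nu,s_j\nu}\to\F_\nu$ is an isomorphism, so the tangent spaces at $F_{w,ws_j}$ and $F_w$ agree as $\LT^\Omega$-modules, the Euler-class quotient is $1$, and (i) follows immediately. In case (ii), Lemma~\ref{Lem: properties of O} \eqref{Oitem4} shows that $\mathrm{pr}_1\cl \overline{\vO}^{s_j}_{\nu,\nu}\to \F_\nu$ is a $\mathbb{P}^1$-bundle whose fiber over $F_w$ is $\mathbb{P}((F_w)_{j+1}/(F_w)_{j-1}) = \mathbb{P}(\bF_{w(j)}\oplus\bF_{w(j+1)})$. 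The two $\LT^\Omega$-fixed points $[\bF_{w(j)}]$ and $[\bF_{w(j+1)}]$ of this fiber correspond to $F_{w,w}$ and $F_{w,ws_j}$ respectively, with one-dimensional tangent spaces of $\LT^\Omega$-weight $\chi_{w(j+1)}-\chi_{w(j)}$ and $\chi_{w(j)}-\chi_{w(j+1)}$. Combining with $\eu(\F_\nu, F_w)$ produces the Euler-class quotient $(-1)^{\delta_{w,w'}}(\chi_{w(j)}-\chi_{w(j+1)})^{-1}$, which together with the formula from the previous paragraph gives (ii). I do not foresee any serious obstacle; the only subtle point is matching the two $\LT^\Omega$-fixed points on the $\mathbb{P}^1$-fiber to the two choices of $w'$ and keeping track of the sign, and this is forced by the way $F_w$ and $F_{ws_j}$ differ in position $j$.
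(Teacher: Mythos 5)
Your proposal is correct and takes essentially the same route as the paper: factor each Euler class via the vector-bundle structure $\tF_\alpha\to\F_\alpha$ (respectively $\vZ^{s_j}_\alpha\to\overline{\vO}^{s_j}_\alpha$), reduce to the ratio of base Euler classes times $\mathfrak{d}(\ve_w/(\ve_w\cap\ve_{ws_j}))$, compute the latter from \eqref{eq:cw}, and handle the base ratio by Lemma~\ref{Lem: properties of O}~\eqref{Oitem5} in case (i) and by the $\mathbb{P}^1$-bundle tangent weight in case (ii). The only cosmetic difference is that you read off the fiber tangent weights at the two fixed points of $\mathbb{P}(\bF_{w(j)}\oplus\bF_{w(j+1)})$ directly, whereas the paper phrases the same one-dimensional space as the Lie-algebra quotient $(\mathfrak{b}_{\alpha,w}+\mathfrak{b}_{\alpha,ws_j})/\mathfrak{b}_{\alpha,w'}$ via \eqref{eq:bw}; these are the same computation.
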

\begin{proof}
By \eqref{eq:cw}, we have
\begin{align} \label{Eq: frac of ew}
\frac{\ve_w}{\ve_w \cap \ve_{ws_j}} \simeq \bigoplus_{a \in \Omega_{\nu_{j+1}, \nu_j}} \mathrm{Hom}(\mathbb{F}_{w(j+1)}, \mathbb{F}_{w(j)} )
\end{align}
as a $\Lt^\Omega$-module.
Hence we have
\eqn
&&\mathfrak{d}(\frac{\ve_w}{\ve_w \cap \ve_{ws_j}})
=\prod_{a\in \Omega_{\nu_{j+1}, \nu_j}}(\chi_{w(j)}-\chi_{w(j+1)}+\hbar_a).
\eneqn

By  \eqref{Eq: Lambda 1} and \eqref{Eq: Lambda 2}, we have
\eqn
\Lambda_{w,w'}^{s_j}{}^{-1} \Lambda_w &=&
\frac{\eu(\F_w,F_w)}{\eu(\oO^{s_j}_{w , w'},F_{w,w'})}
\frac{ \mathfrak{d}(\ve_{w}) }{ \mathfrak{d}(\ve_{w} \cap \ve_{ws_j})}\\
&=&\frac{\eu(\F_w,F_w)}{\eu(\oO^{s_j}_{w , w'},F_{w,w'})}
\prod_{a\in \Omega_{\nu_{j+1}, \nu_j}}(\chi_{w(j)}-\chi_{w(j+1)}+\hbar_a).
\eneqn

\smallskip
\noi
(i) Assume that $\nu\not=s_j\nu$. Then $w'=ws_j$ and
$\oO^{s_j}_{w , ws_j}\isoto \F_{\nu_w}$ by Lemma~\ref{Lem: properties of O}
~\eqref{Oitem5}. Hence we obtain (i).

\smallskip
\noi
(ii) Assume that $\nu_w = s_j\nu_{w}$, and $w'=w,ws_j$.
Recall that $B_{\alpha,w}=\{g\in G_\alpha\mid gF_w=F_w\}$ and
$\bal$ is its Lie algebra.
The morphism $\oO^{s_j}_{w , w'}\to \F_w$ is a $\mathbb{P}^1$-bundle and
the tangent space of the fiber at $F_{w,w'}$
is isomorphic to
$$\dfrac{\bal+\bal[ws_j]}{\bal[w']}
\simeq
\begin{cases}
\mathrm{Hom}(\mathbb{F}_{w(j)}, \mathbb{F}_{w(j+1)} )&\text{if $w'=w$,}\\
\mathrm{Hom}(\mathbb{F}_{w(j+1)}, \mathbb{F}_{w(j)} )&\text{if $w'=ws_j$}
\end{cases}
$$
by \eqref{eq:bw}.
Hence we obtain
$$\frac{\eu(\oO^{s_j}_{w , w'},F_{w,w'})}{\eu(\F_w,F_w)}
=\begin{cases}
\chi_{w(j+1)}-\chi_{w(j)}&\text{if $w'=w$}\\
\chi_{w(j)}-\chi_{w(j+1)} &\text{if $w'=ws_j$,}
\end{cases}$$
which implies (ii).
\end{proof}

We now describe explicitly the $\gR(\alpha)$-module structure of
$\gPol(\alpha)$.
Recall that for $f\in\gH[x_1,\ldots,x_m]$ and $\nu\in I^\alpha$, we denote by
$f(\nu)$ the element
$f\bl\chi_1(\nu), \ldots, \chi_m(\nu)\br\in \gPol(\alpha)$.
Recall also that $(w f)(x_{1},\ldots, x_{m}) =
f(x_{w(1)}, \ldots, x_{w(m)})$ for $w\in
\weyl$.
The actions of $e(\nu)$, $\varkappa_k(\nu)$, $\tau_t(\nu) \in
\gR(\alpha)$ ($k=1,\ldots,m,\ t=1,\ldots,m-1,\ \nu \in I^\alpha$) on
$\gPol(\alpha)$ are given explicitly in the following
proposition.

\begin{Prop} \label{Prop: actions of Pol} \
\bnum
\item $\gPol(\alpha)$ is a faithful $\gR(\alpha)$-module.
\item Let $f \in \gH[\chi_1, \ldots, \chi_m]$, and $\nu, \nu' \in I^\alpha$.
\begin{enumerate}[{\rm(a)}]
\item For $k = 1,\ldots, m$, we have
\begin{align*}
\hs{8ex}e(\nu) \star f(\nu') = \left\{
              \begin{array}{ll}
                f(\nu) & \hbox{ if } \nu = \nu',  \\
                0 & \hbox{ if } \nu \ne \nu',
              \end{array}
            \right.
\qquad
\varkappa_k(\nu) \star f(\nu') = \left\{
              \begin{array}{ll}
               \chi_k(\nu) f(\nu) & \hbox{ if } \nu = \nu',  \\
                0 & \hbox{ if } \nu \ne \nu'.
              \end{array}
            \right.
\end{align*}
\item For $j = 1,\ldots, m-1$ and $\nu=(\nu_1, \ldots, \nu_m)$ , we have

\begin{align*}
\tilde\tau_j\star f(\nu) = \left\{
                   \begin{array}{ll}
                                          \left( \prod_{a\in \Omega_{\nu_j, \nu_{j+1}}}(\chi_{j}(s_j\nu) - \chi_{j+1}(s_j\nu)+ \hbar_a) \right) (s_jf)(s_j\nu) &
\text{if $s_j\nu \ne \nu$,} \\
\left( \prod_{a\in \Omega_{\nu_j, \nu_{j}}} (\chi_{j}(\nu) - \chi_{j+1}(\nu)+ \hbar_a )
\right) (\partial_j f)(\nu) & \text{if $s_j\nu = \nu$.}
                   \end{array}
                 \right.
\end{align*}
\end{enumerate}
\end{enumerate}
\end{Prop}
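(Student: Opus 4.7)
The plan is to establish the explicit formulas in (ii) first and then to deduce the faithfulness statement (i) from them via localization. The formulas for $e(\nu)$ and $\varkappa_k(\nu)$ are essentially tautological: by the definition $\eqref{Eq: def of e and x}$ and the ring isomorphism $\eqref{Eq: H(Ze)}$, convolution with these elements on $\gPol_{\nu'}$ vanishes unless $\nu=\nu'$, in which case it is multiplication by $1$ and by $\chi_k(\nu)$ inside $\gH[\chi_1(\nu),\ldots,\chi_m(\nu)]$.

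The main computation is the formula for $\tilde\tau_j \star f(\nu)=\tau_j(\nu)\star f(\nu)$, which I would carry out by transporting the convolution to the fully localized module through the commutative diagram $\eqref{Eq: localization diagram1}$. By Lemma~\ref{Lem: convolution}(iii), $\Phi_\alpha(\tau_j(\nu))$ expands as a sum over $w\in \weyl(\nu)$ with coefficients ${\Lambda^{s_j}_{ws_j,w}}^{-1}$ (and, when $s_j\nu=\nu$, also ${\Lambda^{s_j}_{w,w}}^{-1}$), while by Lemma~\ref{Lem: convolution}(i), $\Psi_\nu(f(\nu))$ expands in $\{\zeta_{w'}\}$ with coefficients $f(\chi_{w'(1)},\ldots,\chi_{w'(m)})\Lambda_{w'}^{-1}$. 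Lemma~\ref{Lem: convolution}(iv) collapses the convolution to a diagonal sum in which the factors $\Lambda_w$, $\Lambda_w^{-1}$ cancel. Applying Lemma~\ref{Lem: computation for Lambdas} to convert the remaining ratios ${\Lambda^{s_j}_{w,w'}}^{-1}\Lambda_w$ into explicit products $\prod_a(\chi_{w(j)}-\chi_{w(j+1)}+\hbar_a)$, and substituting $u=ws_j$ on the outer index of summation, yields (after injectivity of $\Psi_\nu$ or $\Psi_{s_j\nu}$) exactly the two formulas claimed.

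For the faithfulness statement (i) I would argue directly via localization. Suppose $\phi\in\gR(\alpha)$ annihilates $\gPol(\alpha)$; writing $\Phi_\alpha(\phi)=\sum_{w,w'}c_{w,w'}\zeta_{w,w'}$ and using Lemma~\ref{Lem: convolution}(iv), the vanishing $\Phi_\alpha(\phi)\star \Psi_\alpha(f)=0$ translates, for each $w$, into
\[\sum_{w'\in \weyl}c_{w,w'}\,f_{\nu_{w'}}(\chi_{w'(1)},\ldots,\chi_{w'(m)})=0\]
for every $f=\sum_\nu f_\nu(\nu)\in\gPol(\alpha)$. For each fixed $\nu$ the polynomial $f_\nu$ ranges freely over $\gH[x_1,\ldots,x_m]$, and since the tuples $(\chi_{w'(1)},\ldots,\chi_{w'(m)})$ for distinct $w'\in\weyl(\nu)$ are pairwise distinct in $\gK^m$, a Lagrange/Vandermonde argument shows that the corresponding evaluation maps are linearly independent over $\gK$; this forces $c_{w,w'}=0$ for all $w,w'$, and injectivity of $\Phi_\alpha$ gives $\phi=0$.

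The principal technical obstacle lies in the case $s_j\nu=\nu$ of (ii): the two fixed-point contributions at $(0,F_{w,w})$ and $(0,F_{ws_j,w})$ are individually singular in $\chi_{w(j)}-\chi_{w(j+1)}$ and must combine with opposite signs, through the identity $(s_jf-f)/(x_j-x_{j+1})=\partial_j f$, to produce the divided-difference operator $\partial_j$ with the correct prefactor $\prod_{a\in\Omega_{\nu_j,\nu_j}}(\chi_j(\nu)-\chi_{j+1}(\nu)+\hbar_a)$. This cancellation is precisely where the loops at $\nu_j$ enter geometrically and where the polynomials $\mathcal{P}_i$ of the KLR presentation appear on the nose.
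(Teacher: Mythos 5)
Your proof is correct and follows essentially the same route the paper takes: the computation of $\tilde\tau_j\star f(\nu)$ is done by transporting to the localized side via the commutative diagram \eqref{Eq: localization diagram1}, expanding with Lemma~\ref{Lem: convolution}(i),(iii), collapsing with Lemma~\ref{Lem: convolution}(iv), and substituting the Euler-class ratios from Lemma~\ref{Lem: computation for Lambdas}, with the two fixed-point contributions combining into the divided difference exactly as you describe. For part (i) the paper simply invokes Lemma~\ref{Lem: convolution}(iv) as a one-liner; your more explicit argument (spelling out the vanishing condition $\sum_{w'}c_{w,w'}\,(w'f_{\nu_{w'}})=0$ and using linear independence of the distinct permutation/evaluation maps) is a correct unpacking of that same idea.
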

\begin{proof}
(i) Our assertion follows from Lemma \ref{Lem: convolution} (iv).

\smallskip
\noi
(ii) Since the assertion (a) is straightforward,
we shall prove the assertion (b).
Since the diagram $\eqref{Eq:
localization diagram1}$ is commutative, it suffices to show that
$$ \Phi_\alpha(\tilde\tau_j) \star \Psi_\alpha(f(\nu)) = \Psi_\alpha(g), $$
where $g = \left\{
             \begin{array}{ll}
               \left( \prod_{a\in \Omega_{\nu_j, \nu_{j+1}}}(\chi_{j}(s_j\nu) - \chi_{j+1}(s_j\nu)+ \hbar_a) \right) (s_jf)(s_j\nu) & \hbox{ if } s_j\nu \ne \nu,\\
               \left( \prod_{a\in \Omega_{\nu_j, \nu_{j}}} (\chi_{j}(\nu) - \chi_{j+1}(\nu)+ \hbar_a ) \right) (\partial_j f)(\nu) & \hbox{ if } s_j\nu = \nu.
             \end{array}
           \right.
$

\smallskip
By Lemma~\ref{Lem: convolution}, we have
\eq&&\ba{rcl}
\Phi_\alpha(\tilde\tau_j)
\star \Psi_\alpha(f(\nu))&=&
\sum\limits_{w\in \weyl}\bl
\delta_{\nu_w,\nu_{ws_j}}\Lambda^{s_j}_{w,w}{}^{-1} \zeta_{w,w}
+ \Lambda^{s_j}_{ws_j,w}{}^{-1} \zeta_{ws_j,w} \br
\star \sum\limits_{w \in \weyl(\nu)} (wf) \Lambda_w^{-1} \zeta_w \\
&=& \sum\limits_{w \in \weyl(\nu)}
\Bigl( \delta_{\nu_w,\nu_{ws_j}}\Lambda_{w,w}^{s_j}{}^{-1}(wf) \zeta_w
+ \Lambda_{ws_j,w}^{s_j}{}^{-1} (wf) \zeta_{ws_j} \Bigr).
\ea\label{eq:phipsi}
\eneq

Suppose that $ \nu \ne s_j\nu$.
Then, by Lemma \ref{Lem: computation for Lambdas} and \eqref{eq:phipsi},
we have
\begin{align*}
\Phi_\alpha(\tilde\tau_j) \star \Psi_\alpha(f(\nu)) &=
\sum_{w \in \weyl(\nu)} \Lambda_{ws_j,w}^{s_j}{}^{-1} (wf) \zeta_{ws_j}\\
&= \sum_{w \in \weyl(s_j\nu)}
\Lambda_{w,ws_j}^{s_j}{}^{-1}\Lambda_w(ws_jf) \bl\Lambda_w^{-1}\zeta_{w}\br\\
&= \sum_{w \in \weyl({s_j\nu})} \Bigl(\prod_{a \in \Omega_{\nu_{j}, \nu_{j+1}}}(\chi_{w(j)} - \chi_{w(j+1)} + \hbar_a)\Bigr) (ws_jf)  \Lambda_{w}^{-1} \zeta_w \\
&= \Psi_\alpha \biggl( \Bigl( \prod_{a\in \Omega_{\nu_j, \nu_{j+1}}}(\chi_{j}(s_j\nu) - \chi_{j+1}(s_j\nu)+ \hbar_a) \Bigr) (s_jf)(s_j\nu) \biggr).
\end{align*}
Here the last equality follows from Lemma~\ref{Lem: convolution} (i).

\smallskip
We now assume that $ \nu = s_j\nu$.
By Lemma \ref{Lem: computation for Lambdas} and \eqref{eq:phipsi}, we obtain

\begin{align*}
\Phi_\alpha(\tilde\tau_j) \star \Psi_\alpha(f(\nu)) &=
\sum_{w \in \weyl(\nu)}
\Bigl(\Lambda_{w,w}^{s_j}{}^{-1}(wf) \zeta_w
+ \Lambda_{ws_j,w}^{s_j}{}^{-1} (wf) \zeta_{ws_j} \Bigr)\\
 &= \sum_{w \in \weyl({\nu})} \Bigr( \Lambda_{w,w}^{s_j}{}^{-1}\Lambda_w (wf)  +
\Lambda_{w,ws_j}^{s_j}{}^{-1} \Lambda_w (ws_jf) \Bigr) \Lambda_w^{-1} \zeta_{w} \\
&= \sum_{w \in \weyl({\nu})}
\Bigl(\prod_{a\in \Omega_{\nu_j, \nu_{j}}} (\chi_{j}(\nu) - \chi_{j+1}(\nu)+ \hbar_a ) \Bigr)
\frac{ ws_jf - wf}{\chi_{w(j)}- \chi_{w(j+1)}}
\Lambda_w^{-1} \zeta_{w}  \\
&= \Psi_\alpha \biggl(  \Bigl(
\prod_{a\in \Omega_{\nu_j, \nu_{j}}} (\chi_{j}(\nu) - \chi_{j+1}(\nu)+ \hbar_a ) \Bigr)
\frac{ (s_jf)(\nu) - f(\nu)}{\chi_{j}(\nu)- \chi_{j+1}(\nu)} \biggr),
\end{align*}
which completes the proof.
\end{proof}

Let $R(\alpha)$ be the Khovanov-Lauda-Rouquier algebra over the
graded commutative ring $\gH$ associated with the data
$(\mathsf{A},\mathsf{P},\Pi,\Pi^\vee)$ and the polynomials
$\mathcal{P}_i(u,v), \mathcal{Q}_{i,j}(u,v) \in
\gH[u,v]$ defined in $\eqref{Eq: P and Q}$.  We take the
$\Z$-grading defined by $\eqref{Eq: grading of R}$.
Then we have a faithful graded polynomial representation $\aPol(\alpha)$ of $R(\alpha)$ given in Proposition \ref{Prop: faithful}.

Now we can state and prove the main result of this paper.

\begin{Thm} \label{Thm: main thm}
  There exists a unique $\gH$-algebra isomorphism
$\Theta: R(\alpha) \longrightarrow \gR(\alpha)$ such that
\eq&&
\ba{l}
\Theta(\mathsf{e}(\nu)) = e(\nu), \quad
\Theta(\mathsf{x}_k(\nu)) = \varkappa_k (\nu), \quad
\Theta(\mathsf{r}_t(\nu)) = \tau_t (\nu) \\[1ex]
\hs{19ex}\text{for $\nu \in I^\alpha$, $k=1,\ldots,m$ and $t=1,\ldots,m-1$.}
\ea
\label{eq:Theta}
\eneq
\end{Thm}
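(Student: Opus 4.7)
The strategy is to play the two faithful polynomial representations $\aPol(\alpha)$ and $\gPol(\alpha)$ off each other, then compare sizes using the PBW-type decomposition of $\gR(\alpha)$ coming from Lemma~\ref{Lem: inj of R}.

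First, I would identify the index sets $\Lambda_{i,j}$ used in the definition of $\mathcal{P}_i$ and $\mathcal{Q}_{i,j}$ (cf.~\eqref{Eq: P and Q}) with the edge sets $\Omega_{i,j}$ of the quiver, so that $\mathsf{l}_i = \ell_i$ and $\mathsf{h}_{i,j} = h_{i,j}$. Under this identification, and under the $\gH$-module isomorphism
\[
\aPol(\alpha) \isoto \gPol(\alpha), \qquad f(\mathsf{x}_1(\nu),\ldots,\mathsf{x}_m(\nu)) \longmapsto f(\chi_1(\nu),\ldots,\chi_m(\nu)),
\]
the formulas in Proposition~\ref{Prop: faithful} for the action of $\mathsf{e}(\nu), \mathsf{x}_k(\nu), \mathsf{r}_t(\nu)$ on $\aPol(\alpha)$ match term-by-term with the formulas in Proposition~\ref{Prop: actions of Pol} for the action of $e(\nu), \varkappa_k(\nu), \tau_t(\nu)$ on $\gPol(\alpha)$.

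Next I would establish the existence of $\Theta$ as follows. Define a candidate map on generators by \eqref{eq:Theta}. To check that the defining relations of $R(\alpha)$ are satisfied by $e(\nu), \varkappa_k(\nu), \tau_t(\nu)$ in $\gR(\alpha)$, pick any defining relation $P=0$ of $R(\alpha)$. Applying both sides to the identified module $\aPol(\alpha)\simeq \gPol(\alpha)$, the $R(\alpha)$-side vanishes by definition, while the $\gR(\alpha)$-side acts by the image relation under $\Theta$. By the matching of actions just established, the geometric expression acts as zero on $\gPol(\alpha)$; the faithfulness part of Proposition~\ref{Prop: actions of Pol}\,(i) then forces it to be zero in $\gR(\alpha)$. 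This produces the desired algebra homomorphism $\Theta$; uniqueness is immediate since the $\mathsf{e}(\nu), \mathsf{x}_k(\nu), \mathsf{r}_t(\nu)$ generate $R(\alpha)$ over $\gH$.

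For surjectivity, I would use Lemma~\ref{Lem: inj of R}. Part (ii) gives $\gR(\alpha) = \bigoplus_{w\in \weyl}\gR^{\le \id}_\alpha \star [\vZ^w_\alpha]$ as a $\gR^{\le\id}_\alpha$-module, and $\gR^{\le\id}_\alpha\simeq\bigoplus_\nu \gH[\varkappa_1(\nu),\ldots,\varkappa_m(\nu)]$ is already in the image of $\Theta$ by~\eqref{Eq: def of e and x}. For each $w$, fix a reduced expression $w = s_{j_1}\cdots s_{j_\ell}$ and induct on $\ell(w)$ using Lemma~\ref{Lem: inj of R}\,(iii): the product $\tilde\tau_{j_1}\star\cdots\star\tilde\tau_{j_\ell}$ equals $[\vZ^w_\alpha]$ modulo $\gR^{<w}_\alpha$, so the image of $\Theta$ contains $[\vZ^w_\alpha]$ up to already-captured terms, and induction finishes the job.

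Injectivity is the cleanest step, and I expect it to drop out formally: if $\Theta(x) = 0$ in $\gR(\alpha)$, then $\Theta(x)$ acts as zero on $\gPol(\alpha)$, hence (under the action-matching from the first paragraph) $x$ acts as zero on $\aPol(\alpha)$, which forces $x = 0$ by Proposition~\ref{Prop: faithful}.

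The only step with real content is verifying that the action formulas of Propositions~\ref{Prop: faithful} and~\ref{Prop: actions of Pol} agree; this is precisely what Lemma~\ref{Lem: computation for Lambdas} was designed to guarantee, since the products $\prod_{a\in\Omega_{\nu_j,\nu_{j+1}}}(\chi_j - \chi_{j+1}+\hbar_a)$ that appear in the geometric side are exactly the polynomials $\mathcal{P}_{\nu_j}$ and $\mathcal{Q}_{\nu_j,\nu_{j+1}}$ under the identification $\Lambda_{i,j}\leftrightarrow\Omega_{i,j}$. The main (mild) obstacle is bookkeeping in the inductive surjectivity argument when several $\tilde\tau_j$'s are composed, but the filtration in Lemma~\ref{Lem: inj of R} is tailored to handle this cleanly.
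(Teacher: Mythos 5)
Your proposal is correct and follows essentially the same route as the paper: identify $\aPol(\alpha)$ with $\gPol(\alpha)$, use the matching action formulas (Propositions~\ref{Prop: faithful} and~\ref{Prop: actions of Pol}) plus faithfulness of both representations to get existence, uniqueness and injectivity of $\Theta$, then invoke Lemma~\ref{Lem: inj of R} for surjectivity. The paper states this in three sentences; your version just unpacks the same steps (in particular spelling out that Lemma~\ref{Lem: inj of R}\,(ii)--(iii) give the generation of $\gR(\alpha)$ by $e(\nu)$, $\varkappa_k(\nu)$, $\tilde\tau_j$ via a length induction along reduced words).
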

\begin{proof}
We can easily identify $\aPol(\alpha)$ with $\gPol(\alpha)$. It
follows from $\eqref{Eq: actions in aPol}$, Proposition \ref{Prop:
faithful} and Proposition \ref{Prop: actions of Pol} that
there exists a unique injective $\gH$-algebra homomorphism $\Theta$ satisfying
\eqref{eq:Theta}.
 By Lemma~\ref{Lem: inj of R},
$\gR(\alpha)$ is generated by $e(\nu)$, $\varkappa_k (\nu)$ and
$\tau_t (\nu)$, and hence $\Theta$ is surjective.
\end{proof}

\vskip 2em

\section{Indecomposable projective modules and lower global bases}
\addtocounter{subsection}{1}
In this section, we give a 1-1 correspondence between {\it
Kashiwara's lower global basis} (or {\it Lusztig's canonical basis})
of $U_\A^-(\g)$ (resp.\ $V_\A(\lambda)$) and the set of isomorphism
classes of indecomposable projective graded $R$-modules (resp.\
indecomposable projective graded $R^\lambda$-modules) when any
of the diagonal entries of the symmetric Borcherds-Cartan matrix $A=(a_{ij})_{i,j\in
I}$ does not vanish. Let us keep all the notations appeared in the
previous sections.  We first suppose that
the symmetric Borcherds-Cartan
matrix $A$ is arbitrary.

For a given quasi-projective variety $X$ over $\C$ with an action of
a complex linear algebraic group $G$ and $A,B \in \D^b_{\LG}(X) $,
let $\Hom_{\LG}^{k} (A,B) = \Hom_{\D^b_{\LG}(X)}(A, B [k])$, where
$[ \cdot ]$ is the translation functor.
Recall the map $\pi_\nu$ given in $\eqref{Eq: maps pi and p}$.
For $\nu \in I^{\alpha}$, we write
\begin{align*}
\mathcal{L}_\nu = R{\pi_\nu}_! (\C_{\tF_\nu} [2\dim_\C \tF_\nu ])
\in  \D_{\LG_\alpha^\Omega}^b(\E_\alpha).
\end{align*}
Note that $\mathcal{L}_\nu$ is semisimple \cite[Proposition
4.1]{KS06}. Let $\mathcal{L}_\alpha = \bigoplus_{\nu\in
I^\alpha}\mathcal{L}_\nu$. Then, by the same argument as in the
proof of \cite[Lemma 8.6.1]{CG97}, we obtain
\begin{align} \label{Eq: Ext isom}
\gR({\alpha}) \simeq  \Hom_{\LG_\alpha^\Omega}^*( \mathcal{L}_\alpha, \mathcal{L}_{\alpha} )
\quad\text{and}\quad
 \gR_{\nu,\nu'} \simeq  \Hom_{\LG_\alpha^\Omega}^*( \mathcal{L}_\nu, \mathcal{L}_{\nu'} )
\quad\text{for $\nu, \nu' \in I^\alpha$.}
\end{align}
Then $\gR(\alpha)$ is isomorphic to
the opposite algebra $\Hom_{\LG_\alpha^\Omega}^*(\mathcal{L}_\alpha,
\mathcal{L}_\alpha)^{\rm op}$ of
$\Hom_{\LG_\alpha^\Omega}^*(\mathcal{L}_\alpha, \mathcal{L}_\alpha)$
\cite[Section 8.6]{CG97}.

For $\alpha \in \mathsf{Q}^+$, let $\mathcal{P}_\alpha$ be the set
of isomorphism classes of simple $\LG_\alpha^\Omega$-equivariant
perverse sheaves $\mathcal{L}$ on $\E_\alpha$ such that
$\mathcal{L}[ k ]$ appears as a direct summand of
$\mathcal{L}_\alpha$ for some $ k\in \Z$. Let $\mathcal{Q}_\alpha$
be the full subcategory of $\D_{\LG_\alpha^\Omega}^b(\E_\alpha)$
consisting of $\mathcal{L}$ having the form
$$ \mathcal{L} \simeq \mathcal{L}_1 [ k_1 ] \oplus \cdots  \oplus \mathcal{L}_r [ k_r ] $$
for some $\mathcal{L}_i\in \mathcal{P}_\alpha$ and $k_i\in \Z$.
Then $\mathcal{L}_\alpha$ belongs to $\mathcal{Q}_\alpha$
by the decomposition theorem \cite{BBD82}.

We now identify $R(\alpha)$ with $\gR(\alpha)$ via Theorem \ref{Thm:
main thm}. Then,  for $\mathcal{L} \in
\mathcal{Q}_\alpha$, the vector space $ \Hom_{\LG_\alpha^\Omega}^*(
\mathcal{L}_\alpha, \mathcal{L} )$ has the left $R(\alpha)$-module
structure. 
Moreover, $\Hom_{\LG_\alpha^\Omega}^*( \mathcal{L}_\alpha, \mathcal{L})$ is a
projective $R(\alpha)$-module by the construction of
$\mathcal{Q}_\alpha$.  Then we obtain the following proposition
by general results on idempotent complete categories.
\begin{Prop} \label{Prop: isom Upsilon}
Let
$$
\Upsilon_\alpha : \mathcal{Q}_\alpha \longrightarrow R(\alpha)\text{-}\proj
\quad \text{and}\quad \Xi_\alpha: R(\alpha)\text{-}\proj \longrightarrow \mathcal{Q}_\alpha
$$
 be the functors given by
\begin{align*}
\Upsilon_\alpha(\mathcal{L}) = \Hom_{\LG_\alpha^\Omega}^*( \mathcal{L}_\alpha, \mathcal{L} )\quad \text{and}\quad \Xi_\alpha(P) = \mathcal{L}_\alpha \otimes_{R(\alpha)}P
\end{align*}
for $\mathcal{L} \in \mathcal{Q}_\alpha$ and
$P \in R(\alpha)\text{-}\proj$, respectively.

Then $\Upsilon_\alpha$ is an equivalence of categories and $\Xi_\alpha$ is
its quasi-inverse.
\end{Prop}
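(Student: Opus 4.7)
The plan is to run the standard Morita-style argument, exploiting the fact that $\mathcal{L}_\alpha$ is an internal progenerator of $\mathcal{Q}_\alpha$. Via the identification \eqref{Eq: Ext isom}, together with the opposite-algebra statement, $\mathcal{L}_\alpha$ carries a canonical right $R(\alpha)$-module structure, and by construction of $\mathcal{P}_\alpha$ every simple object in $\mathcal{P}_\alpha$ occurs, up to a cohomological shift, as a direct summand of $\mathcal{L}_\alpha$ in $\D^b_{\LG_\alpha^\Omega}(\E_\alpha)$.

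First I would verify well-definedness of the two functors. By \eqref{Eq: Ext isom} one has $\Upsilon_\alpha(\mathcal{L}_\alpha)\simeq R(\alpha)$ as a graded left $R(\alpha)$-module, so $\Upsilon_\alpha$ sends finite direct sums of shifts $\bigoplus_i\mathcal{L}_\alpha[k_i]$ to the corresponding finite direct sum of shifts of the regular module. Every object of $\mathcal{Q}_\alpha$ is a direct summand of such a sum (this is where the definition of $\mathcal{P}_\alpha$, together with the Beilinson--Bernstein--Deligne decomposition theorem, enters), so $\Upsilon_\alpha(\mathcal{L})$ is a direct summand of a finitely generated graded free $R(\alpha)$-module, hence is finitely generated projective. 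Dually, $\Xi_\alpha(R(\alpha))\simeq \mathcal{L}_\alpha$ tautologically, and since any $P\in R(\alpha)\text{-}\proj$ is a direct summand of some finite graded free module, $\Xi_\alpha(P)$ is a direct summand of a finite sum of shifts of $\mathcal{L}_\alpha$ and thus lies in $\mathcal{Q}_\alpha$.

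Next I would introduce the adjunction morphisms
$$\eta_P\colon P \longrightarrow \Upsilon_\alpha\Xi_\alpha(P),\quad p \longmapsto \bl f\mapsto f\otimes p\br,$$
$$\varepsilon_{\mathcal{L}}\colon \Xi_\alpha\Upsilon_\alpha(\mathcal{L}) \longrightarrow \mathcal{L},\quad x\otimes f \longmapsto f(x),$$
and check that $\eta_{R(\alpha)}$ and $\varepsilon_{\mathcal{L}_\alpha}$ are isomorphisms. The first reduces to the tautological identification in \eqref{Eq: Ext isom}, while the second is the canonical isomorphism $\mathcal{L}_\alpha\otimes_{R(\alpha)}R(\alpha)\isoto\mathcal{L}_\alpha$. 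Both functors commute with finite direct sums and with passing to direct summands, so the isomorphism property propagates from the generator $R(\alpha)$ to all of $R(\alpha)\text{-}\proj$ and from $\mathcal{L}_\alpha$ to all of $\mathcal{Q}_\alpha$, yielding $\Upsilon_\alpha\circ\Xi_\alpha\simeq \mathrm{id}$ and $\Xi_\alpha\circ\Upsilon_\alpha\simeq\mathrm{id}$.

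The main obstacle is bookkeeping rather than substance: one must keep track carefully of the interplay between the cohomological shift $[k]$ in $\D^b_{\LG_\alpha^\Omega}(\E_\alpha)$ and the internal grading shift $\langle -k\rangle$ on the $R(\alpha)$-module side, and of the opposite-algebra convention for $\gR(\alpha)$, so that the tensor--Hom adjunction $\Xi_\alpha\dashv\Upsilon_\alpha$ is in fact graded in a compatible way. Once this is set up correctly, the matching of direct summands in the two categories reduces to the matching of idempotents in $\End^*\bl\soplus_i\mathcal{L}_\alpha[k_i]\br$ and in the corresponding graded endomorphism algebra of $\soplus_i R(\alpha)\langle -k_i\rangle$, which is immediate from \eqref{Eq: Ext isom}; idempotent completeness of both categories then promotes the pointwise isomorphism statements above to a genuine equivalence of categories.
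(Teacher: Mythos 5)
Your proof is correct and fills in exactly what the paper leaves implicit. The paper's own "proof" is the single clause "by general results on idempotent complete categories," and your argument is the standard Morita-style unpacking of that clause: $\Upsilon_\alpha$ sends $\mathcal{L}_\alpha$ to $R(\alpha)$ by \eqref{Eq: Ext isom} (with the opposite-algebra convention converting $\End^*(\mathcal{L}_\alpha)$-action into a left $R(\alpha)$-module structure), $\Xi_\alpha$ sends $R(\alpha)$ to $\mathcal{L}_\alpha$, the unit and counit are isomorphisms on these generators, and both $\mathcal{Q}_\alpha$ and $R(\alpha)$-$\proj$ are the idempotent completions of the additive subcategories generated by shifts of $\mathcal{L}_\alpha$ and of $R(\alpha)$ respectively, so the equivalence propagates by Krull--Schmidt and Karoubi completeness. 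The bookkeeping caveats you flag (matching $[k]$ with $\langle -k\rangle$, and the $\mathrm{op}$ in $\gR(\alpha)\simeq\Hom^*(\mathcal{L}_\alpha,\mathcal{L}_\alpha)^{\mathrm{op}}$) are the right ones to watch; the paper elides them as well. The only place you could be slightly more precise is in justifying that $\mathcal{L}_\alpha\otimes_{R(\alpha)}P$ is well-defined inside $\D^b_{\LG_\alpha^\Omega}(\E_\alpha)$ for $P$ projective: one should say explicitly that since $P\simeq e\bl\bigoplus_i R(\alpha)\lan k_i\ran\br$ for an idempotent $e$, one defines $\Xi_\alpha(P)$ as the image of the corresponding idempotent endomorphism of $\bigoplus_i\mathcal{L}_\alpha[-k_i]$, which exists because the equivariant derived category is Karoubian. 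This is what your final sentence gestures at, so the gap is cosmetic rather than substantive.
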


From now on, we assume that $a_{ii} \ne 0$ for any $i\in I$. Let
$K(\mathcal{Q}_\alpha)$ denote the $\A$-module generated by
$[\mathcal{L}]$ for $\mathcal{L} \in \mathcal{Q}_\alpha$ subject to
the relations $[\mathcal{L}_1 \oplus \mathcal{L}_2] =
[\mathcal{L}_1]+[\mathcal{L}_2]$ and $[\mathcal{L} [ k ] ] = q^{-k}
[\mathcal{L}]$ for $k\in \Z$.

Let
$$ \mathcal{U}^-_{\A} = \bigoplus_{\alpha \in \mathsf{Q}^+} K(\mathcal{Q}_\alpha). $$
Then, together with the induction and restriction functors given in
\cite{KS06,Lus93}, $\mathcal{U}^-_{\A}$  becomes an $\A$-bialgebra
and there exists an isomorphism \cite[Section 5]{KS06}
\begin{align} \label{Eq: isom Psi}
 \Psi\cl U_{\mathbb{A}}^-(\g)\isoto \mathcal{U}^-_{\A},
\end{align}
where $U_{\mathbb{A}}^-(\g)$ is the $\A$-form of the negative half
of the quantum group $U_q(\g)$. Set
\begin{align}
\mathbf{B} = \bigsqcup_{\alpha \in \mathsf{Q}^+} \Psi^{-1}(\mathcal{P}_\alpha).
\end{align}
The set $\mathbf{B}$ coincides with
the lower global basis (or the canonical basis) of $U_{\A}^-(\g)$.

Let $K_0(R) = \bigoplus_{\alpha \in
\mathsf{Q}^+}K_0(R(\alpha)\text{-}\proj)$ be the Grothendieck group
given in $\eqref{Eq: Grothedieck gp}$ and define
$$\Upsilon = \bigsqcup_{\alpha \in \mathsf{Q}^+} \Upsilon_\alpha :
\mathcal{U}^-_{\A} \overset {\sim} \longrightarrow K_0(R),$$ where
$\Upsilon_\alpha$ is the isomorphism induced by Proposition
\ref{Prop: isom Upsilon}. Using the same argument as in
\cite[Section 4.6]{VV11}, it follows from Theorem \ref{Thm:
categorification U} and $\eqref{Eq: isom Psi}$ that
the isomorphism $\Phi\cl U_\A^-(\g) \rightarrow K_0(R)$ given
in Theorem \ref{Thm: categorification U} satisfies
\begin{align} \label{Eq: upsilon}
\Phi=\Upsilon \circ \Psi.
\end{align}
 Consequently, the first
application of our main result follows.

\begin{Thm} \label{Thm: lgb and PIM for U} The isomorphism $\Phi$ gives a 1-1
correspondence between $\mathbf{B}$ and the set of isomorphism
classes of indecomposable projective $R$-modules.
\end{Thm}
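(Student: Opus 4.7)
The plan is to deduce the correspondence from the factorization $\Phi = \Upsilon \circ \Psi$ recorded in \eqref{Eq: upsilon}, combined with the equivalence of categories $\Upsilon_\alpha \cl \mathcal{Q}_\alpha \isoto R(\alpha)\text{-}\proj$ supplied by Proposition \ref{Prop: isom Upsilon}.

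First I would unwind $\Phi(\mathbf{B})$ using this factorization. By the definition of $\mathbf{B} = \bigsqcup_\alpha \Psi^{-1}(\mathcal{P}_\alpha)$, the image $\Psi(\mathbf{B})$ is the collection of classes $[\mathcal{L}] \in \mathcal{U}^-_\A$ with $\mathcal{L}$ ranging over the simple perverse sheaves in $\mathcal{P}_\alpha$ (for all $\alpha \in \mathsf{Q}^+$). Applying $\Upsilon$, we obtain
$$\Phi(\mathbf{B}) \;=\; \{[\Upsilon_\alpha(\mathcal{L})] \in K_0(R)\mid \mathcal{L}\in\mathcal{P}_\alpha,\ \alpha \in \mathsf{Q}^+\},$$
so the theorem is reduced to showing that $\Upsilon_\alpha$ sends $\mathcal{P}_\alpha$ bijectively to the set of isomorphism classes of indecomposable projective graded $R(\alpha)$-modules.

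Next I would invoke the Krull--Schmidt property of the two categories $\mathcal{Q}_\alpha$ and $R(\alpha)\text{-}\proj$. On the geometric side, the indecomposable objects of $\mathcal{Q}_\alpha$ are precisely the shifts $\mathcal{L}[k]$ with $\mathcal{L} \in \mathcal{P}_\alpha$ and $k \in \Z$. On the algebraic side, the indecomposable objects of $R(\alpha)\text{-}\proj$ are precisely the grading shifts $\langle k \rangle$ of indecomposable projective graded $R(\alpha)$-modules. Since $\Upsilon_\alpha$ is an equivalence of categories, it preserves indecomposability, and it is compatible with shifts in the sense that $\Upsilon_\alpha(\mathcal{L}[k]) \simeq \Upsilon_\alpha(\mathcal{L})\langle k \rangle$, which is immediate from the definition $\Upsilon_\alpha(\mathcal{L}) = \Hom_{\LG_\alpha^\Omega}^*(\mathcal{L}_\alpha,\mathcal{L})$ and the fact that $\Hom_{\LG_\alpha^\Omega}^*(\mathcal{L}_\alpha,\mathcal{L}[k])$ equals the grading shift by $k$ of $\Hom_{\LG_\alpha^\Omega}^*(\mathcal{L}_\alpha,\mathcal{L})$. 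Hence $\Upsilon_\alpha$ induces a bijection between $\mathcal{P}_\alpha$ and isomorphism classes of indecomposable projective graded $R(\alpha)$-modules (each side taken modulo the $\Z$-action by shift, which corresponds on the Grothendieck group level to multiplication by powers of $q$).

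Combining the two steps with the standard normalization of grading shifts on both sides, we obtain the required 1-1 correspondence $b \leftrightarrow \Phi(b)$ between $\mathbf{B}$ and the set of isomorphism classes of indecomposable projective $R$-modules. I do not expect any genuine obstacle: the heavy geometric input has already been absorbed in Theorem \ref{Thm: main thm} (realizing $R(\alpha)$ as $\gR(\alpha)$), in Proposition \ref{Prop: isom Upsilon} (the equivalence $\Upsilon_\alpha$), and in the identification \eqref{Eq: upsilon} of $\Phi$ with $\Upsilon \circ \Psi$; what remains is the essentially formal translation of "simple perverse summands of $\mathcal{L}_\alpha$" into "indecomposable projective graded $R(\alpha)$-modules" via an equivalence of Krull--Schmidt categories, together with the bookkeeping of gradings.
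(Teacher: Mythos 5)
Your argument is correct and is essentially the paper's own reasoning, which the paper compresses into a single "Consequently..." after establishing $\Phi=\Upsilon\circ\Psi$ in \eqref{Eq: upsilon}; you have simply made explicit the Krull--Schmidt/shift bookkeeping (that $\Upsilon_\alpha$ is an equivalence intertwining $[\,\cdot\,][k]$ with $\langle k\rangle$, so it carries $\mathcal{P}_\alpha$ to a set of representatives of indecomposable projective graded $R(\alpha)$-modules, one per shift-orbit). No gap.
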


Let $\lambda$ be a dominant integral weight in $\mathsf{P^+}$ and $K_0(R^\lambda) = \bigoplus_{\alpha \in \mathsf{Q}^+}K_0(R^\lambda(\alpha)\text{-}\proj)$ be the Grothendieck group given in $\eqref{Eq: Grothedieck gp}$.
Let $v_\lambda$ be the highest weight vector of $V_{\A}(\lambda)$ and define $\mathbf{p}_\lambda: U_{\A}^-(\g) \longrightarrow V_{\A}(\lambda)$ by
$$   \mathbf{p}_\lambda(x) = xv_\lambda $$
for $x\in U_{\A}^-(\g)$. Similarly, we define $\mathbf{q}_\lambda: K_0(R) \longrightarrow K_0(R^\lambda)$ by
$$\mathbf{q}_\lambda(P) = R^\lambda(\alpha)\otimes_{R(\alpha)} P$$
for $P\in R(\alpha)\text{-}\proj$.
Then we have the following commutative diagram:
$$
\xymatrix{
 U_{\A}^-(\g) \ar[d]^{\mathbf{p}_\lambda} \ar[r]^{\Phi  } &  K_0(R) \ar[d]^{\mathbf{q}_\lambda} \\
 V_\A(\lambda) \ar[r]^{\Phi^\lambda} &  K_0(R^\lambda),
}
$$
where $\Phi^\lambda: V_\A(\lambda) \rightarrow K_0(R^\lambda)$ is
the isomorphism given in Theorem \ref{Thm: categorification V}.
Then $\mathbf{B}^\lambda \seteq\mathbf{p}_\lambda(\mathbf{B}) \setminus
\{0\}$ coincides with the {\it lower global basis} (or the {\it
canonical basis}) of $V_\A(\lambda)$. For $P\in
R(\alpha)\text{-}\proj$, since $R^\lambda(\alpha)\otimes_{R(\alpha)}
P$ can be viewed as an $R(\alpha)$-module, we have a surjective $
R(\alpha)$-module homomorphism
$$ P \twoheadrightarrow R^\lambda(\alpha)\otimes_{R(\alpha)} P, $$
which implies that $\mathbf{q}_\lambda$ takes an indecomposable
projective $R(\alpha)$-module to an indecomposable projective
$R^\lambda(\alpha)$-module or 0. Therefore we obtain the second
application of our main result.

\begin{Cor} \label{Cor: lgb and PIM for V}
The isomorphism $\Phi^\lambda$ gives a 1-1 correspondence between
$\mathbf{B}^\lambda$ and the set of isomorphism classes of
indecomposable projective $R^\lambda$-modules.
\end{Cor}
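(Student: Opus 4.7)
The plan is to transport Theorem~\ref{Thm: lgb and PIM for U} through the commutative square
\[\Phi^\lambda\circ \mathbf{p}_\lambda=\mathbf{q}_\lambda\circ \Phi\]
displayed just above the statement. Since $\Phi^\lambda$ is an $\A$-linear isomorphism by Theorem~\ref{Thm: categorification V}, and $\mathbf{B}^\lambda=\mathbf{p}_\lambda(\mathbf{B})\setminus\{0\}$ by definition, the desired bijection will follow from the set-theoretic identity
\[\Phi^\lambda(\mathbf{B}^\lambda)=\mathbf{q}_\lambda\bigl(\Phi(\mathbf{B})\bigr)\setminus\{0\}\]
together with the identification of the right-hand side with the collection of isomorphism classes of indecomposable projective graded $R^\lambda$-modules. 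By Theorem~\ref{Thm: lgb and PIM for U} the set $\Phi(\mathbf{B})$ is exactly the collection of isomorphism classes of indecomposable projective graded $R$-modules, so the task reduces to tracking $\mathbf{q}_\lambda$ on such classes.

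First I would verify that for any indecomposable projective graded $R(\alpha)$-module $P$, the module $\mathbf{q}_\lambda(P)=R^\lambda(\alpha)\otimes_{R(\alpha)}P$ is either zero or an indecomposable projective graded $R^\lambda(\alpha)$-module. Projectivity is immediate. For indecomposability, I would note that composition with the surjection $P\twoheadrightarrow \mathbf{q}_\lambda(P)$ combined with the projectivity of $P$ yields a surjective graded ring homomorphism $\End_{R(\alpha)}(P)_0\twoheadrightarrow \End_{R^\lambda(\alpha)}(\mathbf{q}_\lambda(P))_0$; since the source is local (as $P$ is indecomposable projective graded), the target is local or zero. Conversely, every indecomposable projective graded $R^\lambda(\alpha)$-module $Q$ is a direct summand of a free $R^\lambda(\alpha)$-module, hence of the form $\mathbf{q}_\lambda(P')$ for some finitely generated projective graded $R(\alpha)$-module $P'$; decomposing $P'=\bigoplus_i P_i$ by graded Krull--Schmidt and applying the preceding observation, indecomposability of $Q$ forces $Q\cong \mathbf{q}_\lambda(P_i)$ for exactly one summand $P_i$.

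Combining these two observations, $\mathbf{q}_\lambda(\Phi(\mathbf{B}))\setminus\{0\}$ coincides precisely with the set of isomorphism classes of indecomposable projective graded $R^\lambda$-modules, and injectivity of $\Phi^\lambda$ then upgrades the identity above to the claimed bijection between $\mathbf{B}^\lambda$ and such isomorphism classes. I do not anticipate a substantial obstacle: the geometric input is entirely concentrated in Theorem~\ref{Thm: lgb and PIM for U}, and the descent to the cyclotomic quotient is a purely module-theoretic argument. The one point requiring care is the matching of isomorphism classes on the nose (not merely up to grading shift) on both sides, which is ensured by working throughout with the graded local-endomorphism characterization of indecomposable projectives and the graded version of Krull--Schmidt.
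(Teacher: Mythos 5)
Your proposal is correct and follows essentially the same route as the paper: transport Theorem~\ref{Thm: lgb and PIM for U} through the commutative square $\Phi^\lambda\circ\mathbf p_\lambda=\mathbf q_\lambda\circ\Phi$ and verify that $\mathbf q_\lambda$ sends an indecomposable projective graded $R(\alpha)$-module to an indecomposable projective graded $R^\lambda(\alpha)$-module or zero. The only difference is one of detail: the paper derives the indecomposability claim directly from the surjection $P\twoheadrightarrow\mathbf q_\lambda(P)$ and lets the ``onto'' half of the correspondence follow implicitly from the fact that $\Phi^\lambda(\mathbf B^\lambda)$ and the classes of indecomposable projectives are both $\A$-bases of $K_0(R^\lambda)$, whereas you spell out the local-endomorphism-ring argument for indecomposability and prove surjectivity explicitly via the graded Krull--Schmidt decomposition of a free module. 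Both are sound; your version is slightly more self-contained, the paper's slightly more economical.
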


\vskip 2em


\bibliographystyle{amsplain}


\end{document}